\documentclass[11pt]{amsart}
\usepackage{amsmath,amsfonts,amsthm,amsopn,amssymb,latexsym,soul}
\usepackage{array}
\usepackage{cite}
\usepackage{color,enumitem,graphicx}
\usepackage[colorlinks=true,urlcolor=blue,
citecolor=red,linkcolor=blue,linktocpage,pdfpagelabels,
bookmarksnumbered,bookmarksopen]{hyperref}
\usepackage[english]{babel}
\usepackage[left=2.61cm,right=2.61cm,top=2.72cm,bottom=2.72cm]{geometry}
\usepackage[hyperpageref]{backref}
\usepackage[colorinlistoftodos]{todonotes}

\usepackage{bm}

\makeatletter
\providecommand\@dotsep{5}
\def\listtodoname{List of Todos}
\def\listoftodos{\@starttoc{tdo}\listtodoname}
\makeatother

\numberwithin{equation}{section}
\newtheorem{theorem}{Theorem}[section]
\newtheorem{lemma}[theorem]{Lemma}

\newtheorem{definition}[theorem]{Definition}

\newtheorem{remark}[theorem]{Remark}
\newtheorem{corollary}[theorem]{Corollary}
\newcommand{\eps}{\varepsilon}
\newcommand{\C}{\mathbb{C}}
\newcommand{\R}{\mathbb{R}}

\newcommand{\RN}{{\mathbb{R}^N}}
\newcommand{\RD}{{\mathbb{R}^2}}
\newcommand{\RT}{{\mathbb{R}^3}}

\renewcommand{\le}{\leqslant}
\renewcommand{\ge}{\geqslant}
\renewcommand{\a }{\alpha }
\newcommand{\dis}{\displaystyle}

\renewcommand{\b }{\beta }

\newcommand{\g }{\gamma }

\renewcommand{\l }{\lambda}
\newcommand{\n }{\nabla }

\renewcommand{\t}{\theta}

\newcommand{\cg}{\mathcal{G}}

\renewcommand{\H}{H^1(\RD)}
\newcommand{\Ha}{H^1_\a(\RD)}

\newcommand{\Hao}{H^1_{\a ,\o}}

\newcommand{\X}{\mathcal{X}}
\newcommand{\N}{\mathbb{N}}

\renewcommand{\C}{\mathbb{C}}
\renewcommand{\o}{\omega}

\def\bbm[#1]{\mbo\X{\boldmath $#1$}}
\newcommand{\beq }{\begin{equation}}
\newcommand{\eeq }{\end{equation}}

\newcommand{\lr}[1]{\langle #1 \rangle}

\allowdisplaybreaks

\title[Coupled nonlinear Schr\"odinger equations with point interaction]{Coupled nonlinear Schr\"odinger equations with point interaction:
existence and asymptotic behaviour}

\author[Y. Osada]{Yuki Osada}
\address{Department of Mathematics, Saitama University,
\newline\indent
Shimo-Okubo 255, Sakura-ku Saitama-shi, 338-8570, Japan}
\email{yukiosada59@gmail.com}

\author[A. Pomponio]{Alessio Pomponio}
\address{Dipartimento di Meccanica, Matematica e Management,
Politecnico di Bari
\newline\indent
Via Orabona 4,  70125  Bari, Italy}
\email{alessio.pomponio@poliba.it}

\thanks{
Y.O. is partially supported by JSPS KAKENHI Grant Number JP23KJ0293.
\\
A.P. is partially supported by  INdAM - GNAMPA Project 2024 ``Metodi variazionali e
topologici per alcune equazioni di Schrodinger nonlineari" CUP E53C23001670001, by INdAM - GNAMPA Project 2025 ``Approcci topologici e variazionali per problemi ellittici in spazi non compatti" CUP E5324001950001.
A.P. is partially financed by European Union - Next Generation EU - PRIN 2022 PNRR ``P2022YFAJH Linear and Nonlinear PDE's: New directions and Applications".}
\subjclass[2020]{35J50,35Q55,35J47}
\keywords{point interaction, coupled nonlinear Schr\"odinger equations, ground states, asymptotic behaviour, classification of ground state}

\begin{document}
\begin{abstract}
In this paper we deal with the following weakly coupled nonlinear Schr\"{o}dinger system
\begin{align*}
\begin{cases}
- \Delta_\alpha u + \omega u = |u|^2 u + \beta u |v|^2&\quad \mathrm{in}\ \mathbb{R}^2,\\
- \Delta v + \tilde{\omega} v = |v|^2 v + \beta |u|^2 v&\quad \mathrm{in}\ \mathbb{R}^2,
\end{cases}
%\tag{$\mathcal{P}_\beta$}
\end{align*}
where $-\Delta_\alpha$ denotes the Laplacian operator with a point interaction, $\o$ greater then a suitable positive constant, $\tilde{\o}>0$,  and $\b\ge 0$. For any $\b\ge 0$ this system admits the existence of a ground state solution which can have only one nontrivial component or two nontrivial components and which could be regular or singular. We analyse this phenomenon showing how this depends strongly on the parameters. Moreover we study the asymptotic behaviour of ground state solutions whenever $\b\to \infty$.
\end{abstract}

\maketitle

\date{\today}
\maketitle

%I commentouted \torange. 
%Moreover, I replaced \newcommand{\vfi}{\varphi} with \renewcommand{\vfi}{\varphi}.
%Because otherwise, it will be an error.

\section{Introduction}

In last years, great attention has been posed to systems of weakly coupled nonlinear Schr\"odinger equations 
of the type
\begin{align}
\label{sist2}
\begin{cases}
- \Delta u + \omega_1 u = |u|^2 u + \beta u |v|^2&\quad \mathrm{in}\ \mathbb{R}^N,\\
- \Delta v + \omega_2 v = |v|^2 v + \beta |u|^2 v&\quad \mathrm{in}\ \mathbb{R}^N,
\end{cases}
\end{align}
where $\o_1,\o_2>0$ and $\b\ge 0$ are fixed parameters.  
Such systems  describe physical phenomena such as the propagation in birefringent optical fibres,
Kerr-like photorefractive media in optics and Bose–Einstein condensates (see, for example, \cite{K,M1,M2}). 
Observe that if $u$ is a nontrivial solution of the single nonlinear Schr\"odinger equation 
\[
-\Delta u + \omega_1 u = |u|^2 u \qquad\text{in }\RN,
\]
then $(u,0)$ is a nontrivial solution of the system \eqref{sist2}. Analogously $(0,v)$ is a nontrivial solution of the system \eqref{sist2} if $v$ is a nontrivial solution of
\[
- \Delta v + \omega_2 v = |v|^2 v \quad \mathrm{in}\ \mathbb{R}^N.
\]
Such types of solutions are usually called \emph{scalar solutions} of \eqref{sist2}, while a solution $(u,v)$ is called \emph{vector} if both the components are nontrivial. So, it is interesting to study the existence of ground state solutions associated to \eqref{sist2} understanding if such solutions are scalar or vector. Actually, this depends on $\b$ and the ground state solution is scalar under a certain value of $\b$ and, after that, it becomes vector. See, for instance,  \cite{LW, MMP, PS}.

A similar phenomenon occurs also for systems of nonlinear Schr\"odinger
equations with three wave interaction of the following type
\begin{align}
\label{sist3}
\begin{cases}
- \Delta u + \omega_1 u = |u|^2 u + \beta vz&\quad \mathrm{in}\ \mathbb{R}^N,\\
- \Delta v + \omega_2 v = |v|^2 v + \beta uz&\quad \mathrm{in}\ \mathbb{R}^N,\\
- \Delta z + \omega_3 z = |z|^2 z + \beta uv&\quad \mathrm{in}\ \mathbb{R}^N,\\
\end{cases}
\end{align}
where, as before, $\b\ge 0$ and $\o_i>0$, for $i=1,2,3$. These systems describe the Raman amplification  used in optical telecommunication (see \cite{HA}). The existence of scalar and vector ground state solutions has been studied in \cite{KurataOsada21CPAA,Pomponio10}. 
In \cite{KurataOsada21CPAA}, moreover, the asymptotic behaviour of ground states solutions whenever $\b$ goes to $\infty$ has been considered.

On the other hand, in the presence of defects
or impurities, the nonlinear Schr\"odinger equation with point interaction has been recently proposed as an effective model for a Bose-Einstein Condensate (BEC). See \cite{SM,SCMS} for the physical background.
In the 1D case,
there has been a lot of works for the nonlinear Schr\"odinger equation with point interaction,
such as the existence of a ground state solution
and the (in)stability of standing waves;
we refer to \cite{BC,FJ, FOO, KO} and references therein.
Instead, the higher dimensional case is less studied. In presence of a pure power nonlinearity, the  
2D problem has been considered  in \cite{ABCT2, FGI22},
while the 3D problem  in \cite{ABCT3}. Instead \cite{PW} deals with a more general nonlinearity in  $2$ and $3$ dimensions. Concerning with time-dependent problems in higher dimensional case,
we refer to \cite{CFN1, CFN2, FN, GMS} and references therein. See also \cite{dPR} for related problems. 

Formally, in the nonlinear Schr\"odinger equation with point interaction, the classical Laplacian operator is replaced by $-\Delta_\alpha$.
Here $-\Delta_\alpha$ can be thought as the two or three dimensional version of the Schr\"odinger operator with the Dirac delta concentrated in zero $-\frac{d^2}{d x^2}+\a \delta_0$.

A rigorous formulation is given through the self-adjoint extension
of the operator $-\Delta|_{C_0^{\infty}(\R^2 \setminus \{0 \})}$.
Then it is known that there exists a family $\{ - \Delta_{\alpha} \}_{\alpha \in \R}$
of self-adjoint operators which realize all point perturbations of $-\Delta$;
see \cite{AGH, AGHH1, AGHH2, AH}.
As a consequence, the domain of $-\Delta_{\alpha}$ is given by 
\begin{align*}
D(-\Delta_\a) &:=
\big\{ u\in L^2(\RD): \text{there exist} \ q \in \C \text{ and }\l >0\text{ s.t. } 
\\&\qquad \qquad
\phi_\l:=u-q\cg_\l\in H^2(\RD) \text{ and }\phi_\l(0)=(\a +\theta_\l)q \big\},
\end{align*}
where $\cg_\l$ is the Green's function of $-\Delta +\l$ on $\R^2$, 
\begin{equation}\label{xi}
\t_\l:=
\dis \frac{\log\left(\frac{\sqrt{\l}}{2}\right)+\g}{2\pi}
\end{equation}
and $\g $ is the Euler-Mascheroni constant.
Moreover the action is defined by
\begin{equation*}
-\Delta_\a u:= -\Delta \phi_\l - q \l \cg_\l, \quad \text{for all } u=\phi_\l+q\cg_\l\in D(-\Delta_\a).
\end{equation*}
It is also known that $\sigma_{ess}(-\Delta_{\alpha})=[0,\infty)$.
Moreover 
$-\Delta_{\alpha}$ has exactly one negative eigenvalue $-\omega_{\alpha}$
which is given by
\begin{equation}\label{omega-al}
\omega_\a:=
4e^{-4\pi \a -2\g }.
\end{equation}
By the definition of $\t_\l$ in \eqref{xi}, 
we find that $\alpha + \t_\l >0$ for any $\l > \omega_{\alpha}$.

The function $u \in D(-\Delta_{\alpha})$ consists of a \textit{regular part} $\phi_\l$,
on which $-\Delta_{\alpha}$ acts as the standard Laplacian, 
and a \textit{singular part} $q \cg_\l$,
on which $-\Delta_{\alpha}$ acts as the multiplication by $-\l$.
These two components are coupled by the boundary condition 
$\phi_\l(0)= (\alpha + \theta_\l)q$.
The strength $q=q(u)$ is called the \textit{charge} of $u$.
In particular, we have that 
\[
\lr{-\Delta_\a u,u}
= \|\n \phi_\l\|_2^2+\l \|\phi_\l \|_2^2 - \lambda \| u\|_2^2
+(\a +\t_\l)|q|^2.
\]
Here $\langle \cdot, \cdot \rangle$ denotes the standard $L^2$-inner product.
As observed in \cite[Remark 2.1]{ABCT2}, $\l$ is a free parameter
and it does not affect the definition of $-\Delta_{\alpha}$ 
nor the charge $q(u)$.

The energy space associated with $-\Delta_{\alpha}$ is defined by 
\begin{equation*}
\Ha:=
\left\{ u\in L^2(\RD): \text{there exist } 
q \in \C \text{ and }\l >0\text{ s.t. } 
\phi_\l:=u-q \cg_\l\in  H^1(\RD) \right\}.
\end{equation*}
We remark that even if we work on this low regularity space,
$q(u)$ is independent of $\l$ and uniquely determined (see Lemma \ref{lem:q} below).
Therefore, in the definition of $\Ha$, 
we do not stress the dependence of $q(u)$ with respect to $\l$.

For any $\o>\omega_\a$, we define the related quadratic form by
\begin{equation*}
\lr{(-\Delta_\a+\o )u,u}=\|\n \phi_\l\|_2^2+\l \|\phi_\l \|_2^2 + (\o - \l) \|u\|_2^2
+(\a +\t_\l)|q|^2,
\end{equation*}
for $u=\phi_\l+q\cg_\l \in \Ha$.
We also put
\begin{equation*} \label{norm}
\|u\|_{\Hao}^2:=\lr{(-\Delta_\a+\o )u,u}
= \|\n \phi_\l\|_2^2+\l \|\phi_\l \|_2^2 
+ (\o - \l) \|u\|_2^2 +(\a +\t_\l)|q|^2.
\end{equation*}
Clearly if $q(u)=0$, then $\|u\|_{\Hao}$ coincides with 
the norm $\|  u \|_{H^1}$.
Moreover it also holds that
\begin{equation} \label{equiv}
\| u\|_{H^1_{\alpha,\omega_1}} \sim \| u \|_{H^1_{\alpha,\o_2}}
\quad \text{for} \ \ \omega_{\alpha} <\o_1 < \o_2.
\end{equation}
See \cite{FGI22} for details.

Motivated by all this, in the present paper we consider the following weakly coupled nonlinear Schr\"{o}dinger system
\begin{align}
\label{eq4}
\begin{cases}
- \Delta_\alpha u + \omega u = |u|^2 u + \beta u |v|^2&\quad \mathrm{in}\ \mathbb{R}^2,\\
- \Delta v + \tilde{\omega} v = |v|^2 v + \beta |u|^2 v&\quad \mathrm{in}\ \mathbb{R}^2,
\end{cases}
\tag{$\mathcal{P}_\beta$}
\end{align}
where $\o>\o_\a$, $\tilde\o>0$, and $\b\ge 0$.

We want to attach this problem by means of variational techniques. So, let us introduce the functional framework. 
We set  $$\mathbb{H}_\a := H^1_\alpha(\mathbb{R}^2) \times H^1(\mathbb{R}^2).$$ Unless otherwise stated, we fix $\omega_\alpha < \lambda \le \omega$. Solutions of \eqref{eq4} will be found as critical points of the functional $I_\beta : \mathbb{H}_\a \to \mathbb{R}$, where
\begin{align*}
I_\beta(u,v) &:= \frac{1}{2} \left(
\langle (- \Delta_\alpha + \omega)u,u \rangle + \|\nabla v\|_2^2 + \tilde{\omega} \|v\|_2^2
\right) - \frac{1}{4} (\|u\|_4^4 + \|v\|_4^4) - \frac{\beta}{2} \int_{\mathbb{R}^2} |uv|^2\\
&= \frac{1}{2} \left(\|\nabla \phi_\lambda\|_2^2 + \lambda \|\phi_\lambda\|_2^2 + (\omega - \lambda) \|u\|_2^2 + (\alpha + \theta_\lambda) |q|^2\right)\\
&\quad \ + \frac{1}{2} \left(\|\nabla v\|_2^2 + \tilde{\omega} \|v\|_2^2\right) - \frac{1}{4} (\|u\|_4^4 + \|v\|_4^4) - \frac{\beta}{2} \int_{\mathbb{R}^2} |uv|^2,
\end{align*}
for any $u=\phi_\lambda+q\cg_\l\in \Ha$ and $v\in \H$. 

We will look for ground state solutions and so we need to define the ground state level
\[
c_\beta := c_\beta (\o,\tilde{\omega}):= \inf_{(u,v) \in \mathcal{N}_\beta} I_\beta(u,v),
\]
where $\mathcal{N}_\beta$ is the Nehari manifold
\begin{align*}
\mathcal{N}_\beta := \big\{(u,v) \in \mathbb{H}_\a \setminus \{(0,0)\} \mid G_\beta(u,v) = 0\big\},
\end{align*}
with
\begin{align*}
G_\beta(u,v)& := \langle (- \Delta_\alpha + \omega)u,u \rangle + \|\nabla v\|_2^2 + \tilde{\omega} \|v\|_2^2 - (\|u\|_4^4 + \|v\|_4^4) - 2 \beta \int_{\mathbb{R}^2} |uv|^2\\
&= \|\nabla \phi_\lambda\|_2^2 + \lambda \|\phi_\lambda\|_2^2 + (\omega - \lambda) \|u\|_2^2 + (\alpha + \theta_\lambda) |q|^2\\
&\quad \ + \|\nabla v\|_2^2 + \tilde{\omega} \|v\|_2^2 - (\|u\|_4^4 + \|v\|_4^4) - 2 \beta \int_{\mathbb{R}^2} |uv|^2.
\end{align*}
As observed, for example, in \cite{ABCT2}, $I_\b$ \emph{does not depend} on the choice of $\l$ and so also the Nehari manifold $\mathcal{N}_\beta$ and the ground state level $c_\b$. 

We will need to compare the functional $I_\b$ with the corresponding one in absence of the point interaction. Namely we introduce $I_\beta^0 : \mathbb{H}\to \mathbb{R}$ where, for any $(u,v)\in \mathbb{H} := H^1(\RD) \times H^1(\RD)$, 
\begin{align*}
I_\beta^0(u,v)& := \frac{1}{2} \left(
\|\nabla u\|_2^2 + \omega \|u\|_2^2 + \|\nabla v\|_2^2 + \tilde{\omega} \|v\|_2^2
\right) - \frac{1}{4} (\|u\|_4^4 + \|v\|_4^4) - \frac{\beta}{2} \int_{\mathbb{R}^2} |uv|^2.
\end{align*}

Analogously as before, we define its associated ground state level
\begin{align*}
c_\beta^0 := c_\beta^0(\omega,\tilde{\omega}) := \inf_{(u,v) \in \mathcal{N}_\beta^0} I_\beta^0(u,v),
\end{align*}
where $\mathcal{N}_\beta^0$ is the Nehari manifold
\begin{align*}
\mathcal{N}_\beta^0 := \big\{(u,v) \in \mathbb{H} \setminus \{(0,0)\} \mid G_\beta^0(u,v) = 0\big\},
\end{align*}
with
\begin{align*}
G_\beta^0(u,v) := \|\nabla u\|_2^2 + \omega \|u\|_2^2 + \|\nabla v\|_2^2 + \tilde{\omega} \|v\|_2^2 - (\|u\|_4^4 + \|v\|_4^4) - 2 \beta \int_{\mathbb{R}^2} |uv|^2.
\end{align*}
It's well known that the ground state for $c_\b^0$ is attained. See, for example, \cite{AC,LW,PS} and references therein.

Analogously, we have to introduce the corresponding functionals, Nehari manifolds and ground states levels for the single nonlinear Schr\"odinger equation both in presence of the point interaction and without the point interaction. More precisely we set $S_\omega : H^1_\alpha(\mathbb{R}^2) \to \mathbb{R}$ where 
\begin{align*}
S_\omega(u)& := \frac{1}{2} 
\langle (- \Delta_\alpha + \omega)u,u \rangle - \frac{1}{4} \|u\|_4^4\\
&= \frac{1}{2} \left(\|\nabla \phi_\lambda\|_2^2 + \lambda \|\phi_\lambda\|_2^2 + (\omega - \lambda) \|u\|_2^2 + (\alpha + \theta_\lambda) |q|^2\right) - \frac{1}{4} \|u\|_4^4,
\end{align*}
for any $u=\phi_\lambda+q\cg_\l\in \Ha$. Moreover we denote
\begin{align*}
\mathcal{M}_\omega := \big\{u \in H^1_\alpha(\mathbb{R}^2) \setminus \{0\} \mid \langle (- \Delta_\alpha + \omega)u,u \rangle = \|u\|_4^4\big\},
\end{align*}
the associated Nehari manifold and
\begin{align*}
d(\omega) := \inf_{u \in \mathcal{M}_\omega} S_\omega(u),
\end{align*}
the ground state level.
As shown in \cite[Theorem 1.11]{ABCT2}, $d(\o)$ is attained and any minimizer is singular. 

Finally, we set $S_{\tilde{\omega}}^0 : H^1(\mathbb{R}^2) \to \mathbb{R}$ such that, for any $v\in H^1(\mathbb{R}^2)$, 
\begin{align*}
S_{\tilde{\omega}}^0(v) := 
 \frac{1}{2} \left(\|\nabla v\|_2^2 + \tilde{\omega} \|v\|_2^2\right) - \frac{1}{4} \|v\|_4^4,
\end{align*}
and
\begin{align*}
d^0(\tilde{\omega}) := \inf_{v \in \mathcal{M}_{\tilde{\omega}}^0} S_{\tilde{\omega}}^0(v),
\end{align*}
with
\begin{align*}\mathcal{M}_{\tilde{\omega}}^0 := \big\{v \in H^1(\mathbb{R}^2) \setminus \{0\} \mid \|\nabla v\|_2^2 + \tilde{\omega} \|v\|_2^2 = \|v\|_4^4\big\}.
\end{align*}
It's well known that $d^0(\tilde{\omega}) $ is attained.

In the sequel we need also to introduce 
\begin{align}
\label{b0}
\beta_0 &:= \beta_0(\o,\tilde{\omega}) := \max \{\beta \ge 0 \mid c_\beta^0 = c_0^0\},
\\
\label{b*}
\beta^* &:= \beta^*(\o,\tilde{\omega}) := \max \{\beta \ge 0 \mid c_\beta = c_0\}.
\end{align}
Observe that one can show that $\b_0>0$ and it is attained arguing as in \cite{KurataOsada21CPAA}. Also $\b^*$ is positive and attained (see Lemma \ref{le:b*} and Lemma \ref{lemm13}).

\

Our first existence result is the following.

\begin{theorem}\label{th:ex}
Assume that $\alpha \in \mathbb{R}$, $\omega > \omega_\alpha$, $\tilde{\omega} > 0$ 
and $\beta \ge 0$. Then there exists a ground state $(u_0,v_0) \in \mathbb{H}_\alpha$ of \eqref{eq4}. 
\end{theorem}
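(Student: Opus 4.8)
The plan is to obtain a ground state as a minimiser of $I_\beta$ on the Nehari manifold $\mathcal{N}_\beta$, the only genuine issue being the loss of compactness, which I will handle by comparing $c_\beta$ with the corresponding levels of the problem without point interaction, exploiting the fact that a point interaction is localised at the origin and is therefore ``invisible at infinity''. First one records that on $\mathcal{N}_\beta$ one has $I_\beta(u,v)=\tfrac14\big(\langle(-\Delta_\alpha+\omega)u,u\rangle+\|\nabla v\|_2^2+\tilde\omega\|v\|_2^2\big)=\tfrac14\big(\|u\|_4^4+\|v\|_4^4+2\beta\int_{\RD}|uv|^2\big)\ge0$, and that, using $\Ha\hookrightarrow L^4(\RD)$ (recall $\mathcal{G}_\lambda\in L^4(\RD)$) together with $2\beta\int_{\RD}|uv|^2\le\beta(\|u\|_4^4+\|v\|_4^4)$, every $(u,v)\in\mathcal{N}_\beta$ satisfies $\|(u,v)\|_{\mathbb{H}_\alpha}^2\le C(1+\beta)\|(u,v)\|_{\mathbb{H}_\alpha}^4$; hence $c_\beta>0$ and minimising sequences are bounded. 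By Ekeland's variational principle I then produce a bounded Palais--Smale sequence $(u_n,v_n)\in\mathcal{N}_\beta$ for $I_\beta$ at level $c_\beta$ ($\mathcal{N}_\beta$ being a natural constraint). Writing $u_n=\phi_{\lambda,n}+q_n\mathcal{G}_\lambda$, up to a subsequence $q_n\to q_0$ in $\mathbb{C}$, $\phi_{\lambda,n}\rightharpoonup\phi_{\lambda,0}$ in $\H$ and $v_n\rightharpoonup v_0$ in $\H$; since the point-interaction part of $I_\beta'$ depends on $u$ only through the pair $(\phi_\lambda,q)$, which converges, the weak limit $(u_0,v_0)$, with $u_0:=\phi_{\lambda,0}+q_0\mathcal{G}_\lambda$, is a critical point of $I_\beta$. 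In particular $G_\beta(u_0,v_0)=0$, so $I_\beta(u_0,v_0)\ge0$, with equality if and only if $(u_0,v_0)=(0,0)$.

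The heart of the argument is a Brezis--Lieb type splitting. The singular part of $u_n$ is pinned at the origin and is completely captured by $u_0$: indeed $w_n:=u_n-u_0=(\phi_{\lambda,n}-\phi_{\lambda,0})+(q_n-q_0)\mathcal{G}_\lambda$ has charge $q_n-q_0\to0$, hence differs from $\psi_n:=\phi_{\lambda,n}-\phi_{\lambda,0}\rightharpoonup0$ in $\H$ only by a term that is $o(1)$ in $\mathbb{H}_\alpha$. Setting $z_n:=v_n-v_0\rightharpoonup0$ in $\H$, Brezis--Lieb applied to the quartic and to the coupling terms, together with weak convergence in the quadratic forms, yields $G_\beta(u_n,v_n)=G_\beta(u_0,v_0)+G_\beta^0(\psi_n,z_n)+o(1)$ and $I_\beta(u_n,v_n)=I_\beta(u_0,v_0)+I_\beta^0(\psi_n,z_n)+o(1)$; since the left-hand sides equal $0$ and $c_\beta+o(1)$ and $G_\beta(u_0,v_0)=0$, we get $G_\beta^0(\psi_n,z_n)\to0$ and $c_\beta=I_\beta(u_0,v_0)+\lim_n I_\beta^0(\psi_n,z_n)$. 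If $(\psi_n,z_n)\to0$ in $\mathbb{H}$ we are done, because then $(u_n,v_n)\to(u_0,v_0)$ in $\mathbb{H}_\alpha$, so $(u_0,v_0)\in\mathcal{N}_\beta$ and $I_\beta(u_0,v_0)=c_\beta>0$. Otherwise $(\psi_n,z_n)$ is a nonvanishing Palais--Smale sequence for the point-interaction-free functional $I_\beta^0$: from $G_\beta^0(\psi_n,z_n)\to0$ and $(\psi_n,z_n)\not\to0$ one sees $\|\psi_n\|_4+\|z_n\|_4\not\to0$, so Lions' lemma rules out vanishing and a standard profile decomposition produces finitely many nontrivial critical points of $I_\beta^0$, i.e., elements of $\mathcal{N}_\beta^0$, whose energies sum to $\lim_n I_\beta^0(\psi_n,z_n)$; since each of them is $\ge c_\beta^0$, we conclude $c_\beta\ge I_\beta(u_0,v_0)+c_\beta^0\ge c_\beta^0$.

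It remains to rule out this last possibility. One always has $c_\beta\le c_\beta^0$ (every element of $\mathcal{N}_\beta^0$ is charge-free, hence lies in $\mathcal{N}_\beta$ with the same energy), $c_\beta\le d(\omega)$ (test with $(u^*,0)$, $u^*\in\mathcal{M}_\omega$ realising $d(\omega)$), and, by \cite{ABCT2}, $d(\omega)<d^0(\omega)$ (any minimiser of $d(\omega)$ is singular, hence cannot coincide with the regular minimiser of $d^0(\omega)$). Thus in the remaining case $c_\beta=c_\beta^0$, $(u_0,v_0)=(0,0)$, there is exactly one bubble and it is a ground state $(\bar u,\bar v)$ of $I_\beta^0$; up to the usual symmetrisations, $\bar u,\bar v\ge0$. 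If $(\bar u,\bar v)=(\bar u,0)$ then $c_\beta^0=d^0(\omega)>d(\omega)\ge c_\beta$, a contradiction. If $\bar u,\bar v>0$, take $\lambda=\omega$ and recall $\mathcal{G}_\omega>0$: for $t>0$ small the competitor $(\bar u+t\mathcal{G}_\omega,\bar v)$ increases $\|\bar u+t\mathcal{G}_\omega\|_4^4+\|\bar v\|_4^4+2\beta\int_{\RD}|(\bar u+t\mathcal{G}_\omega)\bar v|^2$ linearly in $t$ (the derivative at $0$ is $4\int_{\RD}\bar u(\bar u^2+\beta\bar v^2)\mathcal{G}_\omega>0$), while its $\mathbb{H}_\alpha$-norm, equal to $\|\nabla\bar u\|_2^2+\omega\|\bar u\|_2^2+\|\nabla\bar v\|_2^2+\tilde\omega\|\bar v\|_2^2+(\alpha+\theta_\omega)t^2$, grows only quadratically; projecting onto $\mathcal{N}_\beta$ then gives $c_\beta<c_\beta^0$, again a contradiction. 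Hence $(\bar u,\bar v)=(0,\bar v)$ with $\bar v$ a minimiser of $d^0(\tilde\omega)$ and $c_\beta=c_\beta^0=d^0(\tilde\omega)$; but then $(0,\bar v)\in\mathcal{N}_\beta$ and $I_\beta(0,\bar v)=S_{\tilde\omega}^0(\bar v)=d^0(\tilde\omega)=c_\beta$, so $(0,\bar v)$ is a ground state of $(\mathcal{P}_\beta)$. In every case a ground state exists.

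I expect the main obstacle to be precisely this loss of compactness at infinity, together with the bookkeeping of the singular part during the splitting: the decisive inputs are the strict comparison $d(\omega)<d^0(\omega)$ of \cite{ABCT2} and the singular-perturbation competitor above, which between them force $c_\beta<c_\beta^0$ whenever the level $c_\beta^0$ is carried by a vector or by a scalar-in-$u$ ground state, leaving only the benign case in which the ground state can be taken of the form $(0,\bar v)$.
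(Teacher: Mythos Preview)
Your argument is correct, but it is considerably more elaborate than the paper's. The paper does not run a full concentration--compactness scheme; instead it splits into two cases at the outset. If $c_\beta<c_\beta^0$, it quotes the proof of Theorem~\ref{theo1}, which is a direct minimisation on $\mathcal{N}_\beta$: boundedness of a minimising sequence, weak limits $(\phi_{\lambda,0},q_0,v_0)$, the key observation that $q_0=0$ would force $c_\beta\ge c_\beta^0$ (by projecting $(\phi_{\lambda,n},v_n)$ onto $\mathcal{N}_\beta^0$), and then a Brezis--Lieb trichotomy to conclude $(u_0,v_0)\in\mathcal{N}_\beta$ realises $c_\beta$. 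If $c_\beta=c_\beta^0$, the paper simply invokes Lemma~\ref{lemm1}: any minimiser $(u_0,v_0)$ for $c_\beta^0$ (whose existence is classical) lies in $\mathcal{N}_\beta$ with the same energy, hence is a minimiser for $c_\beta$, and Lemma~\ref{min-crit} makes it a critical point. No Ekeland, no Palais--Smale sequence, no profile decomposition.

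Where you genuinely differ is in the treatment of the borderline case $c_\beta=c_\beta^0$. The paper's Lemma~\ref{lemm1} shows that the $c_\beta^0$-minimiser must have $u_0=0$ by a \emph{regularity} argument: as a critical point of $I_\beta$ it satisfies the boundary condition $\phi_{\lambda,0}(0)=(\alpha+\theta_\lambda)q_0$, and since $q_0=0$ one gets $u_0(0)=0$, whence $u_0\equiv 0$ by the strong maximum principle. Your competitor $(\bar u+t\mathcal{G}_\omega,\bar v)$ reaches the same conclusion purely variationally, exploiting that with $\lambda=\omega$ the $\mathbb{H}_\alpha$-norm acquires only an $O(t^2)$ correction while the quartic gains a linear term; this is a nice alternative which avoids the elliptic regularity step. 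Either way one lands on $(0,\bar v)$ as the explicit ground state, exactly as in the paper.

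In short: your route is sound, but the paper's two-line reduction to Theorem~\ref{theo1} and Lemma~\ref{lemm1} is the more economical path here.
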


Our main aim in this paper is not only the existence of a ground state solution, but also the nature of such solution. 

First of all, observe that if $u\in \Ha$ is a nontrivial solution of 
\begin{equation*}
- \Delta_\alpha u + \omega u = |u|^2 u \quad \mathrm{in}\ \mathbb{R}^2,
\end{equation*}
then $(u,0)$ is a nontrivial solution of \eqref{eq4}. Analogously, if $v\in \H$ is a nontrivial solution of 
\begin{equation*}
- \Delta v + \tilde{\omega} v = |v|^2 v \quad \mathrm{in}\ \mathbb{R}^2,
\end{equation*}
then $(0,v)$ is a nontrivial solution of \eqref{eq4}, too. 

In order to distinguish these \emph{semi-trivial} solutions from those with both the two components different from zero, we need the following definition.

\begin{definition}
Let $(u,v) \in \mathbb{H}_\alpha$ be a nontrivial solution of \eqref{eq4}. We say that it is a \emph{scalar solution} if $u=0$ or $v=0$. Conversely, we say that it is a \emph{vector solution} if $u\neq0$ and $v\neq0$. 
\end{definition}

Moreover, we would like to investigate if the first component of the solutions contains or not a singular part.

\begin{definition}
Let $(u,v) \in \mathbb{H}_\alpha$. Then there exist $\phi_\lambda \in H^1(\mathbb{R}^2)$ and $q \in \mathbb{C}$ such that $u = \phi_\lambda + q \cg_\l $. 
We say that $(u,v)$ is regular if $q = 0$. While we say that $(u,v)$ is singular if $q \neq 0$. 
\end{definition}

We will see how all these properties of the solutions are strictly related to the parameter $\b$ and to all the mountain pass levels introduced before. Therefore we need to study carefully all the possible different scenarios. Actually the situation is richer and more delicate in comparison with problems without a point interaction such as \eqref{sist2} and \eqref{sist3}.

We start with the following result.

\begin{theorem} \label{theo1}
Assume that $\alpha \in \mathbb{R}$, $\omega > \omega_\alpha$, $\tilde{\omega} > 0$ 
and $\beta \ge 0$. 
Then if $c_\b<c_\b^0$, then there exists a ground state $(u_0,v_0) \in \mathbb{H}_\alpha$ of \eqref{eq4} such that $q_0 \neq 0$, where $q_0$ is the charge of $u_0$.
Namely, $c_\beta$ has a singular minimizer. 
Moreover, $\phi_{\lambda,0} \neq 0$ holds, where $u_0 = \phi_{\lambda,0} + q_0 \cg_\l $. 
\end{theorem}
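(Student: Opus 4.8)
The plan is to exploit the hypothesis $c_\beta < c_\beta^0$ to rule out, first, that a ground state can be regular, and then to rule out that its regular part $\phi_{\lambda,0}$ vanishes. By Theorem \ref{th:ex} there exists a ground state $(u_0,v_0) \in \mathbb{H}_\alpha$, i.e. $(u_0,v_0) \in \mathcal{N}_\beta$ and $I_\beta(u_0,v_0) = c_\beta$. Write $u_0 = \phi_{\lambda,0} + q_0 \cg_\lambda$ with $\phi_{\lambda,0} \in \H$ and $q_0 \in \C$.

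First I would show $q_0 \neq 0$. Suppose by contradiction that $q_0 = 0$. Then $u_0 \in \H$, so $(u_0,v_0) \in \mathbb{H}$, and for a function with zero charge the quadratic form $\langle (-\Delta_\alpha + \omega)u_0, u_0\rangle$ coincides with $\|\nabla u_0\|_2^2 + \omega\|u_0\|_2^2$; hence $G_\beta^0(u_0,v_0) = G_\beta(u_0,v_0) = 0$ and $I_\beta^0(u_0,v_0) = I_\beta(u_0,v_0)$. Therefore $(u_0,v_0) \in \mathcal{N}_\beta^0$ and $c_\beta^0 \le I_\beta^0(u_0,v_0) = c_\beta$, contradicting $c_\beta < c_\beta^0$. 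Thus $q_0 \neq 0$, i.e. the minimizer is singular.

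Next I would show $\phi_{\lambda,0} \neq 0$. Suppose instead $\phi_{\lambda,0} = 0$, so that $u_0 = q_0 \cg_\lambda$ is purely singular. The key observation is that the Green's function $\cg_\lambda \notin L^4(\RD)$ near the origin (its logarithmic singularity at $0$ is square-integrable but not fourth-power integrable in dimension two), whereas membership in $\mathcal{N}_\beta$ requires in particular $\|u_0\|_4^4 < \infty$; more precisely, $(u_0,v_0) \in \mathcal{N}_\beta$ forces the right-hand side $\|u_0\|_4^4 + \|v_0\|_4^4 + 2\beta\int_{\RD}|u_0 v_0|^2$ to be finite and equal to the (finite) value of the quadratic form. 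If $\beta = 0$ and $v_0 = 0$ this already collapses to the scalar equation $-\Delta_\alpha u_0 + \omega u_0 = |u_0|^2 u_0$, for which \cite[Theorem 1.11]{ABCT2} (or the characterization of $d(\omega)$ recalled above) shows the minimizer cannot be purely singular — alternatively, a purely singular $u_0 = q_0\cg_\lambda$ cannot satisfy the boundary condition $\phi_\lambda(0) = (\alpha + \theta_\lambda)q_0$ with $\phi_\lambda \equiv 0$ unless $q_0 = 0$. In the general coupled case, the cleanest argument is to note that $q_0\cg_\lambda \notin D(-\Delta_\alpha)$ as an element satisfying the equation with an $L^4_{\mathrm{loc}}$-right-hand side, so $u_0$ cannot be a solution of the first equation of \eqref{eq4} at all; since a ground state must in particular be a (weak) solution, this forces $\phi_{\lambda,0} \neq 0$.

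The main obstacle I anticipate is the second part: making rigorous the claim that a purely singular first component is incompatible with being a solution of \eqref{eq4}. One must be careful about the precise meaning of ``solution'' in the energy space $\mathbb{H}_\alpha$ (critical point of $I_\beta$ versus distributional solution) and about the regularity of $q_0\cg_\lambda$; the honest route is to test the Euler--Lagrange equation for $I_\beta$ against perturbations supported in the regular sector, which decouples the charge from the regular part and shows that $\phi_{\lambda,0}$ solves $-\Delta\phi_{\lambda,0} + \lambda\phi_{\lambda,0} = (\omega-\lambda)(-1)u_0 + \dots$ — so if $\phi_{\lambda,0}=0$ the source term $|u_0|^2u_0 + \beta u_0|v_0|^2$ together with the boundary condition at $0$ cannot be balanced, since $|q_0\cg_\lambda|^2 q_0\cg_\lambda$ is not locally integrable to the right power near the origin. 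This local analysis near $0$, comparing the $\log$-type singularity of $\cg_\lambda$ with the integrability demanded by the cubic nonlinearity, is the crux, and it is exactly the place where the one-nontrivial-component, regular-versus-singular dichotomy of the introduction gets its bite.
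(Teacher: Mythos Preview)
Your argument for $q_0\neq 0$ is clean and correct \emph{once a minimizer exists}, but invoking Theorem~\ref{th:ex} is circular here: in the paper, the proof of Theorem~\ref{th:ex} in the case $c_\beta<c_\beta^0$ is literally ``from the proof of Theorem~\ref{theo1}''. The paper's proof of Theorem~\ref{theo1} therefore does the existence work itself, via a minimizing sequence $(u_n,v_n)\in\mathcal N_\beta$ with $u_n=\phi_{\lambda,n}+q_n\cg_\lambda$; the hypothesis $c_\beta<c_\beta^0$ enters in showing that the limiting charge $q_0=\lim q_n$ is nonzero (if $q_n\to 0$ one projects $(\phi_{\lambda,n},v_n)$ onto $\mathcal N_\beta^0$ and gets $c_\beta\ge c_\beta^0$), and this nonvanishing charge is what prevents the weak limit from being trivial and closes the compactness argument. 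So you either need to supply an independent existence proof, or absorb the minimizing-sequence argument into your proof; your contradiction step for $q_0\neq0$ is then essentially the paper's Claim~2, just phrased for the limit rather than along the sequence.

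The second part contains a genuine error: your ``key observation'' that $\cg_\lambda\notin L^4(\RD)$ is false. In dimension two the Green's function has a logarithmic singularity at the origin, and $|\log|x||^p$ is locally integrable for every finite $p$; the paper explicitly recalls $\cg_\lambda\in L^\tau(\RD)$ for all $\tau\in[2,\infty)$ at the start of Section~\ref{se:gs}. So none of the integrability obstructions you describe (finiteness of $\|u_0\|_4^4$, local integrability of $|q_0\cg_\lambda|^2 q_0\cg_\lambda$, etc.) actually bite. The correct route is the one you mention parenthetically and then abandon: regularity theory (as in \cite[Appendix~A]{ABCT2}) gives $\phi_{\lambda,0}\in H^2(\RD)\hookrightarrow C(\RD)$ together with the boundary condition $\phi_{\lambda,0}(0)=(\alpha+\theta_\lambda)q_0$; since $\lambda>\omega_\alpha$ forces $\alpha+\theta_\lambda>0$ and you have already shown $q_0\neq0$, this yields $\phi_{\lambda,0}(0)\neq 0$ and hence $\phi_{\lambda,0}\neq 0$. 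That one-line argument is exactly what the paper uses; your integrability detour should be discarded.
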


From the proof of Theorem \ref{theo1}, we find that $(u,v)$ is a minimizer for $c_\b$ if and only if $(u,v)$ is a ground state of \eqref{eq4}. 
So in what follows, we use the terms ``ground state" and ``minimizer" without any distinction.

In Lemma \ref{lemm2} and Lemma \ref{lemm3}, we give some conditions which assure that $c_\b<c_\b^0$ and so we have the following.

\begin{corollary}\label{cor}
Assume that $\alpha \in \mathbb{R}$, $\omega > \omega_\alpha$, $\tilde{\omega} > 0$ 
and $\beta > 0$. Then if we assume either $\omega \le \tilde{\omega}$ or $\b > \b_0$, then there exists a ground state $(u_0,v_0) \in \mathbb{H}_\alpha$ of \eqref{eq4} such that $q_0 \neq 0$, where $q_0$ is the charge of $u_0$.
Namely, $c_\beta$ has a singular minimizer. 
Moreover, $\phi_{\lambda,0} \neq 0$ holds, where $u_0 = \phi_{\lambda,0} + q_0 \cg_\l $. 
\end{corollary}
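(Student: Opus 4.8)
The plan is to deduce Corollary \ref{cor} directly from Theorem \ref{theo1}, so the whole task reduces to verifying that under either hypothesis $\omega\le\tilde\omega$ or $\beta>\beta_0$ one has the strict inequality $c_\beta<c_\beta^0$. The first step is to recall that Lemma \ref{lemm2} and Lemma \ref{lemm3} (referenced in the paragraph just before the corollary) are precisely designed to give sufficient conditions for $c_\beta<c_\beta^0$; so I would simply state that Lemma \ref{lemm2} yields $c_\beta<c_\beta^0$ when $\omega\le\tilde\omega$ and $\beta>0$, and Lemma \ref{lemm3} yields $c_\beta<c_\beta^0$ when $\beta>\beta_0$, and then invoke Theorem \ref{theo1}. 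Since the corollary's conclusion is verbatim the conclusion of Theorem \ref{theo1}, nothing else is needed once the strict inequality is in hand.

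For completeness I would sketch why the strict inequality is plausible, in case the lemmas are only cited. The key mechanism is that the point interaction \emph{lowers} the ground state energy: by \cite[Theorem 1.11]{ABCT2} the scalar problem with point interaction has $d(\omega)$ attained by a \emph{singular} minimizer, and one expects $d(\omega)<d^0(\omega)$ strictly (the singular part is energetically favourable because $\alpha+\theta_\lambda$ can be taken small/negative relative to the gain in the quartic term). When $\omega\le\tilde\omega$, the cheapest scalar competitor for the system $\eqref{eq4}$ is to put all mass in the first component, giving $c_\beta\le d(\omega)$; correspondingly $c_\beta^0\le d^0(\omega)$ but in fact $c_\beta^0=\min\{d^0(\omega),d^0(\tilde\omega),\dots\}$ and the relevant comparison gives $c_\beta\le d(\omega)<d^0(\omega)$, while one shows $d^0(\omega)\le c_\beta^0$ using $\omega\le\tilde\omega$. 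When instead $\beta>\beta_0$, by the very definition \eqref{b0} of $\beta_0$ one has $c_\beta^0>c_0^0$, i.e. the vector solution for the problem without point interaction is strictly cheaper than any scalar one; then a test-function argument — inserting the singular scalar minimizer of $d(\omega)$ into the first slot, or suitably rescaling a vector competitor to account for the $(\alpha+\theta_\lambda)|q|^2$ term — produces a point-interaction competitor strictly below $c_\beta^0$.

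The only real obstacle, and the reason this is a corollary rather than a triviality, is that the strict inequality $c_\beta<c_\beta^0$ is genuinely the content of Lemmas \ref{lemm2} and \ref{lemm3}; the corollary itself is a pure bookkeeping step. So in the write-up I would keep it to two or three lines: cite the two lemmas for the two cases, then apply Theorem \ref{theo1}. I do not expect any technical difficulty in the corollary's proof proper, since all the analysis is front-loaded into the theorem and the two lemmas.

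\begin{proof}[Proof of Corollary \ref{cor}]
It suffices, by Theorem \ref{theo1}, to show that $c_\beta<c_\beta^0$ under either of the two assumptions. If $\omega\le\tilde\omega$ (and $\beta>0$), this is the content of Lemma \ref{lemm2}. If instead $\beta>\beta_0$, then by the definition \eqref{b0} of $\beta_0$ we have $c_\beta^0>c_0^0$, and Lemma \ref{lemm3} gives $c_\beta<c_\beta^0$ in this case. In either case the strict inequality $c_\beta<c_\beta^0$ holds, and Theorem \ref{theo1} provides a ground state $(u_0,v_0)\in\mathbb{H}_\alpha$ of \eqref{eq4} with charge $q_0\neq0$ and with $\phi_{\lambda,0}\neq0$, where $u_0=\phi_{\lambda,0}+q_0\cg_\l$. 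This proves the corollary.
\end{proof}
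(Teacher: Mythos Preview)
Your proof is correct and matches the paper's approach exactly: the paper's own proof is the single sentence ``Corollary \ref{cor} is an immediate consequence of Lemma \ref{lemm2}, Lemma \ref{lemm3} and Theorem \ref{theo1}.'' One small slip to fix: in your parenthetical remark you write $c_\beta^0>c_0^0$ for $\beta>\beta_0$, but since $\beta\mapsto c_\beta^0$ is non-increasing the correct inequality is $c_\beta^0<c_0^0$ (your verbal interpretation ``the vector solution is strictly cheaper than any scalar one'' is the right one); this aside is not needed for the argument, so the proof stands regardless.
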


Now we show that for \emph{small} value of $\b$, the ground state is scalar and, in this case, the parameters $\o, \tilde{\o}$ play a crucial role. More precisely, the following holds.

\begin{theorem} \label{theo3}
If $0 \le \beta <\beta^*$  then for any minimizer $(u_0,v_0)$ for $c_\beta$ is scalar. If, in addition, $d(\omega) > d^0(\tilde{\omega})$, then $u_0 = 0$ and so $(u_0,v_0)$ is scalar and regular; while if $d(\omega) < d^0(\tilde{\omega})$, then $v_0 = 0$ and $(u_0,v_0)$ is scalar and singular.
If $d(\omega) = d^0(\tilde{\omega})$, then it follows that $u_0 = 0$ or $v_0 = 0$. 
\end{theorem}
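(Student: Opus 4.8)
The plan is to combine two structural facts about the levels with a rescaling (fibering) argument on the Nehari manifolds $\mathcal{N}_{\beta'}$. \emph{Fact 1: $c_\beta=c_0$ for every $\beta\in[0,\beta^*]$.} I would first note that $\beta\mapsto c_\beta$ is non-increasing: if $(u,v)\in\mathcal{N}_\beta$ and $\beta'>\beta$, there is a unique $t>0$ with $(tu,tv)\in\mathcal{N}_{\beta'}$, namely
\[
t^2=\frac{\langle(-\Delta_\alpha+\omega)u,u\rangle+\|\nabla v\|_2^2+\tilde{\omega}\|v\|_2^2}{\|u\|_4^4+\|v\|_4^4+2\beta'\int_{\mathbb{R}^2}|uv|^2}\le1,
\]
and, since on $\mathcal{N}_{\beta'}$ the functional reduces to one quarter of its quadratic part, $I_{\beta'}(tu,tv)=\frac14 t^2\big(\langle(-\Delta_\alpha+\omega)u,u\rangle+\|\nabla v\|_2^2+\tilde{\omega}\|v\|_2^2\big)\le I_\beta(u,v)$; applying this to a minimizer of $c_\beta$ gives $c_{\beta'}\le c_\beta$. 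Together with $c_\beta\le c_0$ (test $c_\beta$ with a scalar minimizer of $c_0$) and $c_{\beta^*}=c_0$ (definition of $\beta^*$, which is positive and attained by Lemma \ref{le:b*} and Lemma \ref{lemm13}), monotonicity forces $c_\beta=c_0$ on $[0,\beta^*]$.

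\emph{Fact 2: $c_0=\min\{d(\omega),d^0(\tilde{\omega})\}$ and every minimizer of $c_0$ is scalar.} For the lower bound I would take $(u_0,v_0)\in\mathcal{N}_0$: if $u_0=0$ then $I_0(0,v_0)=S_{\tilde{\omega}}^0(v_0)\ge d^0(\tilde{\omega})$, and symmetrically if $v_0=0$; if $u_0\ne0\ne v_0$, scale $u_0$ into $\mathcal{M}_\omega$ and $v_0$ into $\mathcal{M}_{\tilde{\omega}}^0$, and since $G_0(u_0,v_0)=0$ forces at least one scaling factor to be $\le1$, strict positivity of the complementary quadratic form ($\langle(-\Delta_\alpha+\omega)u_0,u_0\rangle>0$ as $\omega>\omega_\alpha$, or $\|\nabla v_0\|_2^2+\tilde{\omega}\|v_0\|_2^2>0$) gives $I_0(u_0,v_0)>\min\{d(\omega),d^0(\tilde{\omega})\}$. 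Hence $c_0\ge\min\{d(\omega),d^0(\tilde{\omega})\}$; the reverse inequality follows by testing $c_0$ with a minimizer of $d(\omega)$ (which exists and is singular by \cite[Theorem 1.11]{ABCT2}) or of $d^0(\tilde{\omega})$. The strict inequality obtained in the case $u_0\ne0\ne v_0$ shows that no minimizer of $c_0$ is a vector.

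With these in hand, let $(u_0,v_0)$ be any minimizer of $c_\beta$ with $0\le\beta<\beta^*$, and suppose for contradiction $u_0\ne0$ and $v_0\ne0$. If $\int_{\mathbb{R}^2}|u_0v_0|^2=0$, then $G_\beta(u_0,v_0)=G_0(u_0,v_0)=0$ and $I_\beta(u_0,v_0)=I_0(u_0,v_0)$, so $(u_0,v_0)$ is a vector minimizer of $c_0$, contradicting Fact 2; thus $\int_{\mathbb{R}^2}|u_0v_0|^2>0$. Fix $\beta'\in(\beta,\beta^*)$ and let $t$ be the rescaling constant of Fact 1 with $(tu_0,tv_0)\in\mathcal{N}_{\beta'}$; here $t<1$ \emph{strictly}, because $\int_{\mathbb{R}^2}|u_0v_0|^2>0$ and $\beta'>\beta$. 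Writing $A:=\langle(-\Delta_\alpha+\omega)u_0,u_0\rangle+\|\nabla v_0\|_2^2+\tilde{\omega}\|v_0\|_2^2$,
\[
c_{\beta'}\le I_{\beta'}(tu_0,tv_0)=\frac14 t^2A<\frac14 A=I_\beta(u_0,v_0)=c_\beta=c_0,
\]
contradicting $c_{\beta'}=c_0$ (Fact 1, as $\beta'<\beta^*$). So every minimizer of $c_\beta$ is scalar. To decide which component vanishes: if $u_0=0$ then $(0,v_0)\in\mathcal{N}_\beta$ gives $v_0\in\mathcal{M}_{\tilde{\omega}}^0$ and $c_0=c_\beta=S_{\tilde{\omega}}^0(v_0)\ge d^0(\tilde{\omega})$, so $\min\{d(\omega),d^0(\tilde{\omega})\}=d^0(\tilde{\omega})\le d(\omega)$; symmetrically, if $v_0=0$ then $u_0$ is a minimizer of $d(\omega)$, whence $d(\omega)\le d^0(\tilde{\omega})$ and $u_0$ is singular by \cite[Theorem 1.11]{ABCT2}. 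Therefore: if $d(\omega)>d^0(\tilde{\omega})$ the case $v_0=0$ is excluded, so $(u_0,v_0)=(0,v_0)$, which has charge $0$ and hence is regular; if $d(\omega)<d^0(\tilde{\omega})$ the case $u_0=0$ is excluded, so $(u_0,0)$ is singular; and if $d(\omega)=d^0(\tilde{\omega})$ both $u_0=0$ and $v_0=0$ remain possible, which is exactly the claim.

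The step I expect to be the main obstacle is Fact 1 together with Fact 2 — that is, pinning $c_\beta$ to $c_0$ on all of $[0,\beta^*)$ (which rests on the monotonicity of $\beta\mapsto c_\beta$ and on careful tracking of the fibering constants) and the absence of vector minimizers for $c_0$ (which uses essentially the positive definiteness of each separate quadratic form, a consequence of $\omega>\omega_\alpha$; cf. \eqref{equiv}). Once these are available, the remainder is the short fibering computation above together with the elementary comparison of $d(\omega)$ and $d^0(\tilde{\omega})$.
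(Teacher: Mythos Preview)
Your proof is correct and follows the same overall architecture as the paper: both arguments pin $c_\beta=c_0$ on $[0,\beta^*]$ by monotonicity, then derive a contradiction from a putative vector minimizer by fibering to some $\beta'\in(\beta,\beta^*)$ and obtaining $c_{\beta'}<c_0$. The classification of which component vanishes is also identical in spirit.

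The one genuine difference lies in how the positivity $\int_{\mathbb{R}^2}|u_0v_0|^2>0$ is secured. The paper invokes Lemma~\ref{lemm19}, whose proof passes through Lemma~\ref{lemm18} (replacing $(u_0,v_0)$ by a nonnegative competitor), the $H^2$ regularity of the regular part via the argument of \cite[Appendix~A]{ABCT2}, and the strong maximum principle to force $|u_0|>0$ and $|v_0|>0$ pointwise. You bypass all of this: in the residual case $\int_{\mathbb{R}^2}|u_0v_0|^2=0$ you simply observe that $(u_0,v_0)$ is then also a vector minimizer for $c_0$, and rule this out by the elementary splitting argument in your Fact~2 (scale each component separately onto its own Nehari set; since $A_1+A_2=B_1+B_2$, at least one scaling factor is $\le 1$, and the leftover positive quadratic term yields the strict inequality). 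This is more self-contained for the purpose of Theorem~\ref{theo3}. On the other hand, the paper needs Lemma~\ref{lemm19} anyway for Lemma~\ref{le:dec} and hence for Theorem~\ref{theo6}, so there it is natural to reuse it; your shortcut is local to this theorem and does not replace Lemma~\ref{lemm19} globally.
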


\begin{remark} \label{rem6}
By the scaling argument, $d^0(\tilde{\omega}) = d^0(1) \tilde{\omega}$ holds. Therefore, the followings hold:
\begin{itemize}
\item[(i)] if $\tilde{\omega} > d(\omega) / d^0(1)$, then $d(\omega) < d^0(\tilde{\omega})$ holds; 

\item[(ii)] if $\tilde{\omega} = d(\omega) / d^0(1)$, then $d(\omega) = d^0(\tilde{\omega})$ holds;

\item[(iii)] if $\tilde{\omega} < d(\omega) / d^0(1)$, then $d(\omega) > d^0(\tilde{\omega})$ holds. 
\end{itemize}
\end{remark}

The next result shows that \eqref{eq4} does not admit the existence of vector and regular ground state.

\begin{theorem} \label{theo4}
For any $\b \ge 0$, there is no vector and regular minimizer for $c_\beta$. 
\end{theorem}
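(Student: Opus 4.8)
The plan is to argue by contradiction. Suppose $(u_0,v_0)\in\mathbb{H}_\alpha$ is a vector and regular minimizer for $c_\beta$, and write $u_0=\phi_{\lambda,0}+q_0\cg_\lambda$ with $q_0=0$, so that $u_0=\phi_{\lambda,0}\in H^1(\RD)$. Since $q_0=0$ we have $\langle(-\Delta_\alpha+\omega)u_0,u_0\rangle=\|\nabla u_0\|_2^2+\omega\|u_0\|_2^2$, hence $I_\beta(u_0,v_0)=I_\beta^0(u_0,v_0)$ and $G_\beta(u_0,v_0)=G_\beta^0(u_0,v_0)=0$, so that $(u_0,v_0)\in\mathcal{N}_\beta^0$. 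Every element of $\mathcal{N}_\beta^0$ is regular, hence lies in $\mathcal{N}_\beta$ with $I_\beta=I_\beta^0$ there; therefore $c_\beta\le c_\beta^0\le I_\beta^0(u_0,v_0)=c_\beta$, which yields $c_\beta=c_\beta^0$ and shows that $(u_0,v_0)$ also minimizes $I_\beta^0$ on $\mathcal{N}_\beta^0$. All the terms of $I_\beta^0$ and $G_\beta^0$ other than the gradient ones are invariant under $u\mapsto|u|$ and $v\mapsto|v|$, while $\|\nabla|u_0|\|_2\le\|\nabla u_0\|_2$ and $\|\nabla|v_0|\|_2\le\|\nabla v_0\|_2$; combined with $c_\beta=c_\beta^0$ this forces $(|u_0|,|v_0|)$ to be a minimizer for $c_\beta^0=c_\beta$ as well, since any strict loss in one of these inequalities would, after the Nehari rescaling onto $\mathcal{N}_\beta^0$, produce an element of $\mathcal{N}_\beta^0$ of energy strictly below $c_\beta$, i.e.\ $c_\beta^0<c_\beta$, contrary to $c_\beta\le c_\beta^0$. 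Replacing $(u_0,v_0)$ by $(|u_0|,|v_0|)$, I may thus assume $u_0\ge0$ and $v_0\ge0$; this is still a regular vector minimizer, hence, by the equivalence recorded right after Theorem~\ref{theo1}, a ground state of \eqref{eq4}.

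The next step is the rigidity identity $u_0(0)=0$. Since $u_0,v_0\in H^1(\RD)\hookrightarrow L^p(\RD)$ for every $p\in[2,\infty)$, the nonlinearity $f:=(u_0^2+\beta v_0^2)u_0$ appearing in the first equation of \eqref{eq4} lies in $L^2(\RD)$. Testing that equation against an arbitrary $\psi\in\Ha$ and using the self-adjointness of $-\Delta_\alpha$ gives $u_0\in D(-\Delta_\alpha)$ with $(-\Delta_\alpha+\omega)u_0=f$ in $L^2(\RD)$. As the charge of $u_0$ equals $q_0=0$, the explicit description of $D(-\Delta_\alpha)$ now forces $u_0=\phi_{\lambda,0}\in H^2(\RD)$, $u_0(0)=\phi_{\lambda,0}(0)=(\alpha+\theta_\lambda)q_0=0$, and $-\Delta_\alpha u_0=-\Delta\phi_{\lambda,0}=-\Delta u_0$; hence $-\Delta u_0+\omega u_0=f$ a.e.\ in $\RD$. (Alternatively, $u_0(0)=0$ can be read directly from the weak formulation by testing the first equation against $\cg_\lambda\in\Ha$ and using $(-\Delta+\lambda)\cg_\lambda=\delta_0$.)

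I then close with a positivity argument. From $-\Delta u_0+\omega u_0=f$ and $\omega>0$ we get $u_0=\cg_\omega*f$; since $f=(u_0^2+\beta v_0^2)u_0\ge0$ with $f\not\equiv0$ (the minimizer is vector, so $u_0\not\equiv0$) and $\cg_\omega>0$ on $\RD\setminus\{0\}$, this yields
\[
u_0(0)=(\cg_\omega*f)(0)=\int_{\RD}\cg_\omega(y)\,f(y)\,dy>0,
\]
contradicting $u_0(0)=0$. (Equivalently: $u_0\ge0$, $u_0\not\equiv0$ solves $-\Delta u_0+(\omega-u_0^2-\beta v_0^2)u_0=0$ on the connected set $\RD$, so $u_0>0$ everywhere by the strong maximum principle.) Therefore $c_\beta$ has no vector and regular minimizer, which is precisely Theorem~\ref{theo4}.

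The main obstacle is the first paragraph: a plain symmetrization is not at our disposal here because of the point interaction, so one must exploit that a regular minimizer has vanishing charge, a fact which both identifies $c_\beta$ with $c_\beta^0$ and turns $I_\beta$ into a genuinely $H^1$-functional on which the diamagnetic inequality applies. Once $u_0,v_0\ge0$ is secured, the remaining steps are routine: promoting the weak critical point to an element of $D(-\Delta_\alpha)$, on which regularity forces $u_0(0)=0$, and then the Green's-function / maximum-principle positivity.
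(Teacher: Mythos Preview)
Your proof is correct and follows essentially the same line as the paper's: a regular minimizer lies on $\mathcal{N}_\beta^0$, forcing $c_\beta=c_\beta^0$ and making $(u_0,v_0)$ a minimizer there; then the domain condition $\phi_{\lambda,0}(0)=(\alpha+\theta_\lambda)q_0$ with $q_0=0$ gives $u_0(0)=0$, which the strong maximum principle rules out. The only organisational difference is that the paper reaches the contradiction by splitting into the cases $c_\beta<c_\beta^0$ (handled by Theorem~\ref{theo1}) and $c_\beta=c_\beta^0$ (handled by Lemma~\ref{lemm1}), whereas you derive $c_\beta=c_\beta^0$ directly from the regularity hypothesis and then inline the content of Lemma~\ref{lemm1}; the underlying mechanism is identical.
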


\begin{remark} \label{rem5}
From Theorem \ref{theo1} and Theorem \ref{theo3}, it follows that
if $0 \le \beta < \beta^*$ and $c_\beta < c_\beta^0$ hold, then all minimizers for $c_\beta$ are scalar and singular. 
\end{remark}

Our aim is to prove that for $\b$ sufficiently large all the ground states are vector and singular. In order to do this, first we need the following results.

\begin{theorem} \label{theo5}
Let $\beta \ge 0$ and assume that $c_\beta = c_\beta^0$. Then $$\omega > \tilde{\omega}, \qquad\b\le\beta^* \le \beta_0, \qquad
c_\beta=c_\beta^0 = c_0^0=c_0=d^0(\tilde{\omega})\le d(\o).$$ 
Moreover, we have
\begin{align*}
&c_{\beta'} = c_{\beta'}^0\quad \text{for any}\ 0 \le \beta' \le \beta^*,\\
&c_{\beta'} < c_{\beta'}^0\quad \text{for any}\ \beta' > \beta^*.
\end{align*}

\end{theorem}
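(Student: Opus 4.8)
The plan is to exploit two structural facts established earlier: that the scalar levels are $d(\omega)$ and $d^0(\tilde\omega)$ (with $d^0(\tilde\omega)=d^0(1)\tilde\omega$ by Remark \ref{rem6}), and that, for the point-interaction problem, $c_\beta \le c_0$ always holds with $c_0 = \min\{d(\omega),d^0(\tilde\omega)\}$, and likewise $c_\beta^0 \le c_0^0 = \min\{S^0_\omega\text{-level},d^0(\tilde\omega)\}$ where the first entry is the level of the single NLS with weight $\omega$ (no point interaction). First I would record that $c_\beta \le c_\beta^0$ for every $\beta\ge 0$: given a ground state $(u,v)$ for $c_\beta^0$ in $\mathbb H$, it lies in $\mathbb H_\alpha$ with charge $q=0$, so $G_\beta(u,v)=G_\beta^0(u,v)=0$ and $I_\beta(u,v)=I_\beta^0(u,v)$, hence after projecting onto $\mathcal N_\beta$ (which does not increase the value, since $(u,v)$ already lies on $\mathcal N_\beta$) we get $c_\beta\le c_\beta^0$. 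Consequently the hypothesis $c_\beta=c_\beta^0$ forces equality along a minimizer: there is a regular ground state for $c_\beta$. By Theorem \ref{theo1} (contrapositive) a regular ground state can exist only when $c_\beta = c_\beta^0$, consistent with what we are assuming, and by Theorem \ref{theo4} this regular ground state cannot be vector, hence it is scalar.

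Next I would pin down which component vanishes. A scalar regular ground state is either $(0,v)$ with $v$ a ground state of the second NLS, giving $c_\beta = d^0(\tilde\omega)$, or $(u,0)$ with $u$ regular and solving $-\Delta_\alpha u+\omega u=|u|^2u$; but a regular $u$ in the first equation is just an $H^1$ solution of $-\Delta u+\omega u=|u|^2u$ with the extra boundary constraint $\phi_\lambda(0)=(\alpha+\theta_\lambda)q=0$, i.e. it is a genuine solution of the standard NLS, whose minimal level is strictly above $d(\omega)$ since, by \cite[Theorem 1.11]{ABCT2}, every minimizer of $d(\omega)$ is singular — so this case would give $c_\beta$ equal to the standard single-NLS level, which is $\ge d(\omega)$ and in fact $> d(\omega)$ is not needed, only that $c_0 = d^0(\tilde\omega)$ in this scenario. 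Combining: $c_\beta = d^0(\tilde\omega)$. Since $d^0(\tilde\omega)\le c_0 \le d(\omega)$ we get $d^0(\tilde\omega)\le d(\omega)$, and from $d^0(\tilde\omega) = d^0(1)\tilde\omega$ together with (to be checked) $d(\omega) > d^0(\omega_\alpha)\cdots$ — more cleanly, $d^0(\tilde\omega) \le d(\omega)$ plus the fact that $d(\omega)$ is strictly less than the standard NLS level at $\omega$, which in turn, by scaling, equals $d^0(1)\omega$, gives $d^0(1)\tilde\omega \le d(\omega) < d^0(1)\omega$, hence $\tilde\omega < \omega$. Also $c_0 \le c_\beta = d^0(\tilde\omega) \le c_0$, so $c_0 = d^0(\tilde\omega) = c_\beta = c_\beta^0$, and since $c_0^0\le c_0$ and the second-component trial function $(0,v)$ gives $c_0^0\le d^0(\tilde\omega)$, while $c_\beta^0\le c_0^0$ always, equality propagates: $c_\beta^0 = c_0^0 = c_0 = d^0(\tilde\omega)$.

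For the monotonicity/ordering statements I would argue as follows. The map $\beta\mapsto c_\beta$ is non-increasing (larger $\beta$ enlarges the negative coupling term, so any point of $\mathcal N_{\beta'}$ can be rescaled onto $\mathcal N_\beta$ with smaller energy when $\beta>\beta'$) and similarly for $\beta\mapsto c_\beta^0$; hence $\{\beta: c_\beta = c_0\}$ and $\{\beta: c_\beta^0 = c_0^0\}$ are intervals containing $0$, which is exactly what makes $\beta^*$ and $\beta_0$ in \eqref{b*}, \eqref{b0} well-defined maxima. Given $c_\beta = c_\beta^0$, the above shows $c_\beta = c_0 = c_0^0$, so $\beta \le \beta^*$ and $\beta\le\beta_0$. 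To get $\beta^*\le\beta_0$: for $\beta\le\beta^*$ we have $c_\beta = c_0 = d^0(\tilde\omega)$; but $c_\beta\le c_\beta^0\le c_0^0\le d^0(\tilde\omega) = c_\beta$ forces $c_\beta^0 = c_0^0$ for all such $\beta$, i.e. $[0,\beta^*]\subseteq[0,\beta_0]$, hence $\beta^*\le\beta_0$. This also yields the displayed claims $c_{\beta'}=c_{\beta'}^0$ for $0\le\beta'\le\beta^*$ (both equal $d^0(\tilde\omega)$) and, for $\beta'>\beta^*$, $c_{\beta'}<c_0=c_0^0\ge c_{\beta'}^0$ cannot be turned into $c_{\beta'}=c_{\beta'}^0$ directly — instead one invokes the definition of $\beta^*$ as a \emph{maximum}: for $\beta'>\beta^*$, $c_{\beta'}\neq c_0$, so $c_{\beta'}<c_0$; if we had $c_{\beta'}=c_{\beta'}^0$ the first part of this very theorem (applied at $\beta'$) would give $c_{\beta'}=c_0$, a contradiction, hence $c_{\beta'}<c_{\beta'}^0$.

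I expect the main obstacle to be the book-keeping around the single-equation comparison: one must be careful that a regular solution of the first equation really is a standard NLS solution (the boundary condition reduces to $q=0$ and $\phi_\lambda(0)$ unconstrained only because $q=0$), and that the strict inequality "$d(\omega)<$ standard single-NLS level at $\omega$" (equivalently, minimizers of $d(\omega)$ are singular, \cite[Theorem 1.11]{ABCT2}) is exactly what powers the strict conclusion $\omega>\tilde\omega$. The monotonicity of $\beta\mapsto c_\beta$ and the interval structure are routine Nehari-rescaling arguments, but they must be stated cleanly to justify that $\beta^*,\beta_0$ are attained maxima and to run the final contradiction for $\beta'>\beta^*$.
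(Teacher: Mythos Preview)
Your argument is correct and reaches all the conclusions, but it is organised differently from the paper. The paper first invokes Lemma~\ref{lemm2} (contrapositive) to get $\omega>\tilde\omega$ and Lemma~\ref{lemm4} to get $c_\beta^0=d^0(\tilde\omega)$, and then runs the chain of equalities; you instead re-derive the content of those lemmas inline, via Theorem~\ref{theo4} and a case analysis of scalar regular ground states, and you obtain $\omega>\tilde\omega$ from the scaling identity $d^0(\cdot)=d^0(1)\,\cdot$ together with the strict inequality $d(\omega)<d^0(\omega)$ (a consequence of \cite[Theorem~1.11]{ABCT2}). Both routes work; the paper's is shorter because it has already packaged these facts as lemmas. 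Where your approach genuinely improves on the paper is the last step: for $\beta'>\beta^*$ the paper splits into $(\beta^*,\beta_0]$ and $(\beta_0,\infty)$ and appeals to Lemma~\ref{lemm3} for the second range, whereas your bootstrap---``if $c_{\beta'}=c_{\beta'}^0$ then, by the part already proved, $c_{\beta'}=c_0$, contradicting $\beta'>\beta^*$''---is a one-line argument that avoids the case split entirely.

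Two small points to tighten. First, your treatment of the $(u,0)$ regular case is a little muddled: the clean statement is that such a $u$ lies in $\mathcal M_\omega^0$, so $c_\beta=S_\omega^0(u)\ge d^0(\omega)>d(\omega)\ge c_0\ge c_\beta$, which is impossible; or equivalently, the boundary condition forces $u(0)=0$ and the strong maximum principle gives $u\equiv 0$. Second, your closing remark that ``the boundary condition reduces to $q=0$ and $\phi_\lambda(0)$ unconstrained'' is not right: when $q=0$ the condition $\phi_\lambda(0)=(\alpha+\theta_\lambda)q$ becomes $\phi_\lambda(0)=0$, not an empty condition, and this is precisely what feeds the maximum-principle step. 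Your main body handles this correctly; only the meta-commentary at the end misstates it.
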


\begin{remark}\label{remm}
By Theorem \ref{theo5} we deduce that either $c_\b<c_\b^0$, for any $\b\ge 0$, or $c_{\beta} = c_{\beta}^0$, for any $0 \le \beta \le \beta^*$, and $c_{\beta} < c_{\beta}^0$, for any $\beta > \beta^*$. In any case, we have that $c_{\beta} < c_{\beta}^0$, for any $\beta > \beta^*$.
\end{remark}

\begin{theorem} \label{theo7}
Let $\b \ge 0$. Then the followings are equivalent:
\begin{enumerate}[label=(\roman{*}),ref=\roman{*}]
\item \label{7i}$c_\beta = c_\beta^0$; 
\item \label{7ii} $d(\omega) \ge d^0(\tilde{\omega})$ and $\beta \le \beta^*$.
\end{enumerate}
\end{theorem}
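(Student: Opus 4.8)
The plan is to establish the equivalence by combining the structural information already collected in Theorems \ref{theo1}, \ref{theo3} and \ref{theo5} with the comparison results of Lemma \ref{lemm2} and Lemma \ref{lemm3} (which, as noted after Theorem \ref{theo1}, furnish sufficient conditions for $c_\beta < c_\beta^0$). The implication \eqref{7i}$\Rightarrow$\eqref{7ii} is essentially immediate from Theorem \ref{theo5}: if $c_\beta = c_\beta^0$, then that theorem gives directly $\beta \le \beta^*$ and the chain of equalities $c_\beta = c_\beta^0 = c_0^0 = c_0 = d^0(\tilde\omega) \le d(\omega)$, whence in particular $d(\omega) \ge d^0(\tilde\omega)$. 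So the only real content is the reverse implication \eqref{7ii}$\Rightarrow$\eqref{7i}.

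For \eqref{7ii}$\Rightarrow$\eqref{7i}, I would argue as follows. Assume $d(\omega) \ge d^0(\tilde\omega)$ and $\beta \le \beta^*$. First dispose of the case $\beta < \beta^*$: by definition \eqref{b*} of $\beta^*$ we have $c_{\beta^*} = c_0$, and since $\beta \mapsto c_\beta$ is monotone non-increasing (more coupling can only lower the mountain pass level, because the constraint $G_\beta = 0$ and the energy both move favourably along the Nehari manifold) while $\beta \mapsto c_\beta^0$ is likewise non-increasing, one sandwiches $c_0 = c_{\beta^*} \le c_\beta \le c_0$ using $c_\beta^0 \ge$ something; more cleanly, one uses Remark \ref{remm}/Theorem \ref{theo5}, which asserts that $c_{\beta'} = c_{\beta'}^0$ for all $0 \le \beta' \le \beta^*$ as soon as $c_\beta = c_\beta^0$ holds for a single $\beta$ — but that presupposes what we want. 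Instead, the honest route is: Theorem \ref{theo5} tells us the dichotomy is clean, so it suffices to exclude the alternative branch "$c_{\beta'} < c_{\beta'}^0$ for all $\beta' \ge 0$". If that branch held, then in particular $c_0 < c_0^0$; but $d(\omega) \ge d^0(\tilde\omega)$ together with Theorem \ref{theo3} (applied at $\beta = 0 < \beta^*$, once we know $\beta^* > 0$, which is Lemma \ref{le:b*}) forces the ground state at $\beta = 0$ to be scalar with $v_0 = 0$, i.e. $c_0 = d(\omega)$; comparing $c_0^0$: the $\beta = 0$ ground state for the system without point interaction decouples, so $c_0^0 = \min\{d^0(\omega), d^0(\tilde\omega)\}$, and one checks $d(\omega) \le d^0(\omega)$ (the point interaction only lowers the scalar level, cf. \cite[Theorem 1.11]{ABCT2} and the associated comparison) — hence $c_0 = d(\omega) \le \min\{d^0(\omega),d^0(\tilde\omega)\} = c_0^0$ would need care, and in fact one wants $c_0 \ge c_0^0$ to reach a contradiction. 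The cleaner observation is that $d(\omega) \ge d^0(\tilde\omega)$ means the scalar-$v$ level $d^0(\tilde\omega)$ is the smaller of the two scalar levels with point interaction present, so $c_0 \le d^0(\tilde\omega)$; but also $c_0 \le c_0^0 \le d^0(\tilde\omega)$ trivially and $c_0^0 = d^0(\tilde\omega)$ in this regime, forcing $c_0 = c_0^0 = d^0(\tilde\omega)$, i.e. we land in the first branch of Remark \ref{remm} being false — so the second branch holds and $c_{\beta'} = c_{\beta'}^0$ for all $0 \le \beta' \le \beta^*$, in particular at our $\beta$.

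The remaining case is $\beta = \beta^*$. Here $c_{\beta^*} = c_0$ by definition of $\beta^*$, and we must show $c_{\beta^*} = c_{\beta^*}^0$. If instead $c_{\beta^*} < c_{\beta^*}^0$, then by Remark \ref{remm} we are in the first branch, so $c_\beta < c_\beta^0$ for all $\beta \ge 0$, in particular $c_0 < c_0^0$; but we just argued that $d(\omega) \ge d^0(\tilde\omega)$ forces $c_0 = c_0^0$, a contradiction. Hence $c_{\beta^*} = c_{\beta^*}^0$, completing the proof. The main obstacle, and the step requiring the most care, is the bookkeeping around $\beta = 0$: pinning down exactly that $d(\omega) \ge d^0(\tilde\omega)$ is equivalent to $c_0 = c_0^0$ (equivalently $c_0^0 = d^0(\tilde\omega)$ rather than a strictly smaller value), which rests on the facts $c_0^0 = \min\{d^0(\omega), d^0(\tilde\omega)\}$, $c_0 = \min\{d(\omega), d^0(\tilde\omega)\}$, and $d(\omega) \le d^0(\omega)$. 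Each of these is either classical or already in the excerpt (Theorem \ref{theo3}, \cite{ABCT2}), but assembling them correctly — and in particular invoking $\beta^* > 0$ from Lemma \ref{le:b*} to legitimately apply Theorem \ref{theo3} at $\beta = 0$ — is where one must be attentive.
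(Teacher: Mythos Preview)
Your final argument is correct and is essentially the paper's own proof: show $c_0 = c_0^0$ from the three identities $c_0 = \min\{d(\omega), d^0(\tilde\omega)\}$, $c_0^0 = \min\{d^0(\omega), d^0(\tilde\omega)\}$, and $d(\omega) < d^0(\omega)$, then apply Theorem~\ref{theo5} at $\beta = 0$ to obtain $c_{\beta'} = c_{\beta'}^0$ for every $0 \le \beta' \le \beta^*$. Two points deserve cleaning up. First, the detour through Theorem~\ref{theo3} is not just unnecessary but actually misstated: under $d(\omega) \ge d^0(\tilde\omega)$ that theorem gives $u_0 = 0$ (hence $c_0 = d^0(\tilde\omega)$), not $v_0 = 0$ and $c_0 = d(\omega)$ as you write; simply delete that passage, since your ``cleaner observation'' (together with the ingredients you list in the last paragraph) is already the whole argument. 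Second, the separate treatment of $\beta = \beta^*$ is redundant: the conclusion of Theorem~\ref{theo5} covers the closed interval $[0,\beta^*]$, so once $c_0 = c_0^0$ is established the case $\beta = \beta^*$ falls out with the rest. The paper's write-up is slightly more direct in that, after obtaining $c_0 = c_0^0$, it extracts $\beta^* \le \beta_0$ from Theorem~\ref{theo5} and then reads off $c_\beta = c_0 = c_0^0 = c_\beta^0$ straight from the definitions of $\beta^*$ and $\beta_0$; your route via the dichotomy of Remark~\ref{remm} is equivalent.
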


We can easily find that the following corollary is correct by contraposition.

\begin{corollary} \label{coro8}
Let $\b \ge 0$. Then the followings are equivalent:
\begin{enumerate}[label=(\roman{*}),ref=\roman{*}]
\item \label{8i} $c_\beta < c_\beta^0$; 
\item \label{8ii} $d(\omega) < d^0(\tilde{\omega})$ or $\beta > \beta^*$.
\end{enumerate}
\end{corollary}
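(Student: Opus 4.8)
\textbf{Proof proposal for Corollary \ref{coro8}.}
The plan is to obtain the statement directly from Theorem \ref{theo7} by passing to the contrapositive of the equivalence established there. First I would record the elementary inequality $c_\beta \le c_\beta^0$, valid for every $\beta \ge 0$: identifying $\H$ with the subspace of \emph{regular} elements of $\Ha$ (those with vanishing charge $q=0$), on which $I_\beta$ restricts to $I_\beta^0$ and $G_\beta$ restricts to $G_\beta^0$, one has $\mathcal{N}_\beta^0 \subset \mathcal{N}_\beta$, so the infimum defining $c_\beta$ is taken over a larger set than the one defining $c_\beta^0$. Consequently the negation of the proposition ``$c_\beta = c_\beta^0$'' is precisely ``$c_\beta < c_\beta^0$''.

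Next I would invoke Theorem \ref{theo7}: the equality $c_\beta = c_\beta^0$ is equivalent to the conjunction ``$d(\omega) \ge d^0(\tilde{\omega})$ and $\beta \le \beta^*$''. Two equivalent propositions have equivalent negations, so $c_\beta < c_\beta^0$ is equivalent to the negation of that conjunction. Applying De Morgan's law, and using that $d(\omega), d^0(\tilde{\omega})$ and $\beta^*$ are real numbers so that $\neg(d(\omega) \ge d^0(\tilde{\omega}))$ is $d(\omega) < d^0(\tilde{\omega})$ and $\neg(\beta \le \beta^*)$ is $\beta > \beta^*$, this negation reads ``$d(\omega) < d^0(\tilde{\omega})$ or $\beta > \beta^*$''. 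This is exactly the asserted equivalence (\ref{8i}) $\Leftrightarrow$ (\ref{8ii}).

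Since the argument is purely logical once Theorem \ref{theo7} is in hand, there is no genuine obstacle; the only point not to overlook is the inequality $c_\beta \le c_\beta^0$, which is what legitimates replacing ``$c_\beta = c_\beta^0$ fails'' by ``$c_\beta < c_\beta^0$''. All the substantive content has already been carried out in the proof of Theorem \ref{theo7}, and, further upstream, in Theorem \ref{theo5} together with the lemmas establishing the properties of $\beta^*$.
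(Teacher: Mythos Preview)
Your proposal is correct and follows the same approach as the paper, which simply notes that the corollary ``is correct by contraposition'' of Theorem~\ref{theo7}. Your additional remark that $c_\beta \le c_\beta^0$ (so that the negation of equality is strict inequality) is a welcome clarification of a point the paper leaves implicit.
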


Finally we can say that for \emph{large} $\b$ the ground state solution of \eqref{eq4} is always vector and singular.

\begin{theorem} \label{theo6}
If $\beta > \beta^*$, all minimizers for $c_\beta$ are vector and singular. 
\end{theorem}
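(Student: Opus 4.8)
The plan is to argue by contradiction, leaning on two strict inequalities that hold whenever $\beta>\beta^*$: the inequality $c_\beta<c_\beta^0$, which is exactly what Remark~\ref{remm} gives, and the inequality $c_\beta<\min\{d(\omega),d^0(\tilde{\omega})\}$, which I first have to establish. Once both are available, a minimizer that fails to be singular or fails to be vector is quickly seen to push $c_\beta$ up to one of these forbidden values, and the proof is finished.

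To get $c_\beta<\min\{d(\omega),d^0(\tilde{\omega})\}$ I would proceed in three small steps. First, for \emph{every} $\beta\ge 0$ the semi-trivial pairs $(\psi,0)$ and $(0,w)$ — where $\psi\in\mathcal{M}_\omega$ attains $d(\omega)$ (it exists by \cite[Theorem~1.11]{ABCT2}) and $w\in\mathcal{M}_{\tilde{\omega}}^0$ attains $d^0(\tilde{\omega})$ — both belong to $\mathcal{N}_\beta$, because the coupling term $\beta\int_{\mathbb{R}^2}|uv|^2$ vanishes on them; since $I_\beta(\psi,0)=S_\omega(\psi)=d(\omega)$ and $I_\beta(0,w)=S_{\tilde{\omega}}^0(w)=d^0(\tilde{\omega})$, this yields $c_\beta\le\min\{d(\omega),d^0(\tilde{\omega})\}$. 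Second, since $\beta^*>0$, Theorem~\ref{theo3} applies at $\beta=0$ and tells us that any minimizer for $c_0$ is scalar; reading off the energy of its nontrivial component shows $c_0\ge d(\omega)$ or $c_0\ge d^0(\tilde{\omega})$, hence $c_0\ge\min\{d(\omega),d^0(\tilde{\omega})\}$ and so $c_0=\min\{d(\omega),d^0(\tilde{\omega})\}$. Third, by the maximality built into the definition \eqref{b*} of $\beta^*$ we have $c_\beta\ne c_0$ whenever $\beta>\beta^*$; combined with $c_\beta\le c_0$ this forces $c_\beta<c_0=\min\{d(\omega),d^0(\tilde{\omega})\}$ (one could equivalently record first that $\beta\mapsto c_\beta$ is non-increasing, by the usual Nehari rescaling, and conclude the same way).

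With both strict inequalities in hand, let $(u_0,v_0)\in\mathbb{H}_\alpha$ be any minimizer for $c_\beta$ (one exists by Theorem~\ref{th:ex}), and write $u_0=\phi_{\lambda,0}+q_0\cg_\l$. If $q_0=0$, then $u_0\in H^1(\mathbb{R}^2)$ and $\langle(-\Delta_\alpha+\omega)u_0,u_0\rangle=\|\nabla u_0\|_2^2+\omega\|u_0\|_2^2$, so $(u_0,v_0)\in\mathbb{H}$ with $G_\beta^0(u_0,v_0)=G_\beta(u_0,v_0)=0$ and $I_\beta^0(u_0,v_0)=I_\beta(u_0,v_0)=c_\beta$; hence $(u_0,v_0)\in\mathcal{N}_\beta^0$ and $c_\beta^0\le c_\beta$, contradicting $c_\beta<c_\beta^0$. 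Therefore $q_0\ne 0$, i.e. every minimizer is singular; in particular $u_0\ne 0$, since $u_0=0$ would give $q_0=q(0)=0$. If instead $v_0=0$, then $G_\beta(u_0,0)=0$ forces $u_0\in\mathcal{M}_\omega$, whence $c_\beta=I_\beta(u_0,0)=S_\omega(u_0)\ge d(\omega)$, contradicting $c_\beta<d(\omega)$. Thus $u_0\ne 0$ and $v_0\ne 0$, so every minimizer is vector, and the proof is complete.

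The main obstacle is the strict inequality $c_\beta<\min\{d(\omega),d^0(\tilde{\omega})\}$: its non-strict form is immediate, but upgrading it requires pinning down $c_0=\min\{d(\omega),d^0(\tilde{\omega})\}$ via Theorem~\ref{theo3} and then invoking the maximality of $\beta^*$ (or, equivalently, the monotonicity of $\beta\mapsto c_\beta$). Once this is in place, the singular and vector conclusions are contradiction arguments of exactly the flavour already used in the proofs of Theorems~\ref{theo1} and \ref{theo3}, and involve no new ideas.
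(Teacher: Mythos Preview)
Your proof is correct and follows essentially the same route as the paper's: derive $c_\beta < c_0 = \min\{d(\omega),d^0(\tilde\omega)\}$ from the definition of $\beta^*$ to rule out scalar minimizers, and $c_\beta < c_\beta^0$ from Remark~\ref{remm} to rule out regular ones. The only differences are cosmetic---the paper takes $c_0 = \min\{d(\omega),d^0(\tilde\omega)\}$ as already known (it is stated in the proof of Theorem~\ref{theo4}) rather than rederiving it via Theorem~\ref{theo3}, and it cites Theorem~\ref{theo1} for the singularity conclusion instead of writing out your direct contradiction argument.
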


\begin{remark} \label{rem4}
Assume $c_\beta = c_\beta^0$ and so $0 \le \beta <\beta^*$.
In this case, $d(\omega) \ge d^0(\tilde{\omega})$ holds by Theorem \ref{theo5}.  Then from Theorem \ref{theo3}, the followings hold.
\begin{itemize}
\item[(i)] If $d(\omega) = d^0(\tilde{\omega})$, then
$\mathcal{E}_\beta = (\mathcal{E}(\omega) \times \{0\}) \cup (\{0\} \times \mathcal{E}^0(\tilde{\omega}))$, 
where $\mathcal{E}_\beta$ is the set of minimizers for $c_\beta$, $\mathcal{E}(\omega)$ is the set of minimizers for $d(\omega)$ and $\mathcal{E}^0(\tilde{\omega})$ is the set of minimizers for $d^0(\tilde{\omega})$.
Namely, there exists a singular minimizer for $c_\beta$ when $c_\beta = c_\beta^0$. 
%$(u_0,v_0)$ is a minimizer for $c_\beta$ if and only if $(u_0,v_0)$ is a minimizer for $d(\omega)$ or $d^0(\tilde{\omega})$. 

\item[(ii)] If $d(\omega) > d^0(\tilde{\omega})$, then $\mathcal{E}_\beta = \{0\} \times \mathcal{E}^0(\tilde{\omega})$. 
\end{itemize}
\end{remark}

Summarizing we have the following.
\begin{remark}
Let $\b\ge 0$ and $(u_0,v_0)\in \mathbb{H}_\alpha$ be a ground state  of \eqref{eq4}. Then we have:
\begin{center}
\begin{tabular}{|m{2,2cm}|m{4,5cm}|m{4,5cm}|}
\hline
&$0\le \b <\b^*$&$\b>\b^*$\\
\hline
$d(\o)>d^0(\tilde\o)$&scalar and regular ground state with $u_0=0$
&vector and singular ground state \\
\hline
$d(\o)=d^0(\tilde\o)$&scalar and regular ground state with $u_0=0$ or scalar and singular ground state with $v_0=0$
&vector and singular ground state  \\
\hline
$d(\o)<d^0(\tilde\o)$&scalar and singular ground state with $v_0=0$
& vector and singular ground state \\
\hline
\end{tabular}
\end{center}

\end{remark}

All the previous theorems will be proved in Section \ref{se:gs}, after some preliminaries, together with some useful observations.

\

The second part of the paper, instead, is devoted to the study of the asymptotic behaviour of ground state solutions of \eqref{eq4} as $\beta \to \infty$. To this aim, we introduce the following limit problem
\begin{align}
\label{eq11}
\begin{cases}
- \Delta_\alpha u + \omega u = u |v|^2& \mathrm{in}\ \mathbb{R}^2,\\
- \Delta v + \tilde{\omega} v = |u|^2 v& \mathrm{in}\ \mathbb{R}^2.
\end{cases}
\tag{$\mathcal{\tilde{P}}_\infty$}
\end{align}
The associated  functional $\tilde{I}_\infty : \mathbb{H}_\alpha \to \mathbb{R}$ is so defined
\[
\tilde{I}_\infty(u,v) := \frac{1}{2} \left( \langle (- \Delta_\alpha + \omega)u,u \rangle + \|\nabla v\|_2^2 + \tilde{\omega} \|v\|_2^2\right) - \frac{1}{2} \int_{\mathbb{R}^2} |uv|^2,
\]
and its ground state level is 
\begin{equation}\label{c_inf}
\tilde{c}_\infty := \inf_{(u,v) \in \mathcal{\tilde{N}}_\infty}\tilde{I}_\infty(u,v),
\end{equation}
where 
\[
\mathcal{\tilde{N}}_\infty := \{(u,v) \in \mathbb{H}_\a \setminus \{(0,0)\} \mid \tilde{G}_\infty(u,v) = 0\},
\]
with 
\[\tilde{G}_\infty(u,v) := 
\langle (- \Delta_\alpha + \omega)u,u \rangle + \|\nabla v\|_2^2 + \tilde{\omega} \|v\|_2^2 - 2 \int_{\mathbb{R}^2} |uv|^2.
\]

We have the following asymptotic result.
\begin{theorem} \label{theo2}
It follows that $c_\beta = \tilde{c}_\infty / \beta + o(1/\beta)$ as $\beta \to \infty$. 
Moreover, let $\b_n \to \infty$ with $\b_n > 0$ and $(u_n,v_n)$ be a ground state of $(\mathcal{P}_{\beta_n})$ with $u_n = \phi_{\lambda,n} + q_n \cg_\l \ (\phi_{\lambda,n} \in H^1(\mathbb{R}^2),\ q_n \in \mathbb{C},\ \omega_\alpha < \lambda \le \omega)$. Then up to a subsequence, there exists a ground state $(w_0,z_0) \in \mathbb{H}_\alpha\ (w_0 = \psi_{\lambda,0} + r_0 \cg_\l ,\ \psi_{\lambda,0} \in H^1(\mathbb{R}^2),\ r_0 \in \mathbb{C} \setminus \{0\})$ of \eqref{eq11} such that
\begin{align*}
&\sqrt{\beta_n} \phi_{\lambda,n} \to \psi_{\lambda,0}\quad \mathrm{in}\ H^1(\mathbb{R}^2),\\
&\sqrt{\beta_n} q_n \to r_0\quad \mathrm{in}\ \mathbb{C},\\
&\sqrt{\beta_n} v_n \to z_0\quad \mathrm{in}\ H^1(\mathbb{R}^2). 
\end{align*}
\end{theorem}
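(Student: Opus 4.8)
The plan is to prove Theorem~\ref{theo2} in two stages: first the energy asymptotics $c_\beta = \tilde c_\infty/\beta + o(1/\beta)$, and then the convergence of rescaled ground states. The natural scaling is dictated by the limit problem: if $(u,v)$ is a ground state of \eqref{eq4}, we expect $u,v$ to be of order $1/\sqrt{\beta}$, so we set $\bar u = \sqrt{\beta}\,u$, $\bar v = \sqrt{\beta}\,v$. Under this substitution the constraint $G_\beta(u,v)=0$ becomes
\[
\langle(-\Delta_\alpha+\omega)\bar u,\bar u\rangle + \|\nabla\bar v\|_2^2 + \tilde\omega\|\bar v\|_2^2 - \tfrac1\beta(\|\bar u\|_4^4+\|\bar v\|_4^4) - 2\int_{\mathbb R^2}|\bar u\bar v|^2 = 0,
\]
which is a $\tfrac1\beta$-perturbation of $\tilde G_\infty(\bar u,\bar v)=0$, and $\beta I_\beta(u,v) = \tilde I_\infty(\bar u,\bar v) + O(1/\beta)(\cdots)$ along bounded sequences. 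For the upper bound $c_\beta \le \tilde c_\infty/\beta + o(1/\beta)$, I would take a ground state $(w_0,z_0)$ of \eqref{eq11} (existence of which must itself be established — see below), rescale it by $1/\sqrt\beta$ together with a Nehari projection to land on $\mathcal N_\beta$, and estimate the energy; the projection parameter tends to $1$ as $\beta\to\infty$ because the quartic terms carry the extra $1/\beta$. For the lower bound, take ground states $(u_n,v_n)$ on $\mathcal N_{\beta_n}$, rescale to $(\bar u_n,\bar v_n)$, show this sequence is bounded in $\mathbb H_\alpha$ (using that $\beta_n c_{\beta_n}$ is bounded, which follows from the upper bound), then Nehari-project $(\bar u_n,\bar v_n)$ onto $\tilde{\mathcal N}_\infty$ with a projection factor $t_n\to 1$, obtaining $\tilde c_\infty \le \tilde I_\infty(t_n\bar u_n, t_n\bar v_n) = \beta_n c_{\beta_n} + o(1)$.

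For the convergence statement, once $(\bar u_n,\bar v_n)$ is bounded in $\mathbb H_\alpha$, I would pass to a weakly convergent subsequence $\bar u_n = \bar\phi_{\lambda,n}+\bar q_n\cg_\l \rightharpoonup w_0 = \psi_{\lambda,0}+r_0\cg_\l$ in $H^1_\alpha$ and $\bar v_n\rightharpoonup z_0$ in $H^1$, meaning $\bar\phi_{\lambda,n}\rightharpoonup\psi_{\lambda,0}$ in $H^1(\mathbb R^2)$ and $\bar q_n\to r_0$ in $\mathbb C$ (the charge being a bounded linear functional, by Lemma~\ref{lem:q}). The key compactness input is that the coupling term $\int|\bar u_n\bar v_n|^2$ does not vanish — otherwise the constraint would force $\bar u_n,\bar v_n\to 0$, contradicting $\tilde c_\infty>0$ and boundedness away from $0$ on the Nehari manifold. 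To prevent vanishing I would invoke a Lions-type concentration-compactness argument: rule out vanishing by the lower energy bound, and rule out dichotomy/translation-to-infinity by comparing with the problem at infinity (the point interaction is localized at $0$, so a translated profile would solve the untranslated NLS system, whose ground state energy is $\tilde c_\infty^0\ge\tilde c_\infty$ — actually one expects strict inequality $\tilde c_\infty<\tilde c_\infty^0$ when $d(\omega)$-type quantities favor the singular part, and this strictness is what pins the mass at the origin). Then $(w_0,z_0)$ is a nontrivial weak solution of \eqref{eq11}, hence lies on $\tilde{\mathcal N}_\infty$, so $\tilde I_\infty(w_0,z_0)\ge\tilde c_\infty$; combined with weak lower semicontinuity and $\tilde I_\infty(\bar u_n,\bar v_n)\to\tilde c_\infty$ this gives equality, $(w_0,z_0)$ is a ground state of \eqref{eq11}, and all the weak convergences upgrade to strong convergence in $H^1(\mathbb R^2)$ (norm convergence plus weak convergence in Hilbert space). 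That $r_0\ne 0$ — i.e. the limit is genuinely singular — should follow by the same mechanism that gives singular minimizers in Theorem~\ref{theo1}: if $r_0=0$ then $(w_0,z_0)\in\mathbb H$ would be a regular ground state, and one derives a contradiction with $\tilde c_\infty < \tilde c_\infty^0$, exactly parallel to the proof that $c_\beta<c_\beta^0$ forces a singular minimizer.

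Two preliminary facts are needed that are not quite in the excerpt and which I would establish first. One: the limit problem \eqref{eq11} has a ground state, $\tilde c_\infty$ is attained, $\tilde c_\infty>0$, and minimizers are singular; this is proved by the same variational scheme used for $c_\beta$ itself — minimization on $\tilde{\mathcal N}_\infty$, concentration-compactness, and the singular-minimizer argument of Theorem~\ref{theo1} adapted to the pure-coupling nonlinearity. Two: the strict inequality $\tilde c_\infty < \tilde c_\infty^0$ between the point-interaction limit level and the free limit level, which is the analogue of Corollary~\ref{cor} / Lemma~\ref{lemm2}-\ref{lemm3} and is obtained by using a singular test function to beat the free infimum. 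I expect the main obstacle to be precisely this compactness/no-vanishing step for the rescaled sequence: one must carefully track that the optimal concentration happens at the origin (where the point interaction lives) rather than escaping to infinity, and quantify the gain $\tilde c_\infty < \tilde c_\infty^0$ strictly enough that a splitting $\bar u_n \approx w_0 + (\text{translated bump})$ is energetically forbidden. The rescaling and Nehari-projection bookkeeping for the energy asymptotics is routine by comparison.
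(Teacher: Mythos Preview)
Your proposal is correct and follows the paper's strategy closely: the same $\sqrt{\beta}$ rescaling, the upper bound $\beta c_\beta < \tilde c_\infty$ by projecting a limit ground state onto $\mathcal N_\beta$ (this is exactly Lemma~\ref{lemm11}), boundedness of the rescaled sequence, weak limits, the contradiction $r_0\neq 0$ via $\tilde c_\infty < \tilde c_\infty^0$, and the energy sandwich giving strong convergence.

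The one place you overcomplicate matters is the Lions-type concentration-compactness step; the paper does not use it and does not need it. The charge is a scalar, so $r_n\to r_0$ is automatic along the bounded subsequence. Once you prove $r_0\neq 0$ (by exactly the mechanism you describe: assume $r_0=0$, project the regular parts $(\psi_{\lambda,n},z_n)$ onto the free Nehari set $\tilde{\mathcal N}_\infty^0$, and contradict $\tilde c_\infty < \tilde c_\infty^0$), the weak limit $w_0=\psi_{\lambda,0}+r_0\cg_\lambda$ is automatically nonzero. The weak limit $(w_0,z_0)$ solves \eqref{eq11} (local $L^p$ compactness suffices to pass the nonlinear terms against test functions), hence lies on $\tilde{\mathcal N}_\infty$, and then weak lower semicontinuity of $J=\tfrac14\|\cdot\|_{\mathbb H_\alpha}^2$ sandwiches
\[
\tilde c_\infty \ge \liminf_n \tilde I_{\beta_n}(w_n,z_n)=\liminf_n J(w_n,z_n)\ge J(w_0,z_0)=\tilde I_\infty(w_0,z_0)\ge \tilde c_\infty,
\]
forcing equality and upgrading weak to strong $H^1$ convergence. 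In other words, the nonzero charge already pins mass at the origin; there is no separate vanishing/dichotomy/translation scenario to exclude. This also replaces your explicit ``project onto $\tilde{\mathcal N}_\infty$ with $t_n\to 1$'' lower-bound step, though your version would work too once you note (as the paper does) that the Sobolev-based Nehari lower bound $\|(w_n,z_n)\|_{\mathbb H_\alpha}^2\ge C$ is uniform in $\beta_n$, hence $\int|w_n z_n|^2\ge C>0$ for large $n$.
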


The proof of the previous result can be found in Section \ref{b^*:to:inf}.

We conclude the introduction with some observations.
\\
While the nonlinear Schr\"odinger equation with point interaction has been studied up to dimension~$3$, we cannot treat system \eqref{eq4} in $\RT$ with our variational techniques because $\cg_\l\in L^\tau(\RT)$, only for $\tau\in [1,3)$, and so the functional $I_\b$ would not be well defined in this case.
\\
In addition to \eqref{eq4}, it is possible to consider a system which contains two  point interactions, namely
\begin{align}
\label{eqaa}
\begin{cases}
- \Delta_\alpha u + \omega u = |u|^2 u + \beta u |v|^2&\quad \mathrm{in}\ \mathbb{R}^2,\\
- \Delta_\a v + \tilde{\omega} v = |v|^2 v + \beta |u|^2 v&\quad \mathrm{in}\ \mathbb{R}^2,
\end{cases}
\end{align}
where $\o>\o_\a$, $\tilde\o>\o_\a$, and $\b\ge 0$. The analogous of our results holds also for \eqref{eqaa}. However, in our opinion, the simultaneous presence of two different differential operators in \eqref{eq4} makes this problem more intriguing then \eqref{eqaa}.

\section{Ground state solutions and their properties}\label{se:gs}

%$\cg_\lambda$ is the Green function of $- \Delta + \lambda$ in $\RD$, defined by the distributional relation $(- \Delta + \lambda) \cg_\lambda = \delta$, that is, 
%\begin{align*}
%\cg_\lambda(x) = \frac{1}{2 \pi} \mathcal{F}^{-1}\left[\frac{1}{|\xi|^2 + \lambda}\right](x) = \frac{1}{(2 \pi)^2} \int_{\RD} \frac{e^{- x \cdot \xi}}{|\xi|^2 + \lambda} d \xi,
%\end{align*}
%where $\mathcal{F}^{-1}$ denotes inverse Fourier transform.
%%

This section is devoted to the proofs of our main theorems about the existence and the properties of ground states solutions of \eqref{eq4}. 

We start recalling some useful properties of $\cg_\lambda$.
We have that for all $\eps > 0$ and $\lambda,\mu > 0$, 
\begin{align*}
&\cg_\lambda \in H^{1-\eps}(\RD),\quad \cg_\lambda \not \in H^1(\RD),\\
&\cg_\lambda - \cg_\mu \in H^{3 - \eps}(\RD),\quad \cg_\lambda - \cg_\mu \not \in H^3(\RD)\quad \text{if}\ \lambda \neq \mu.
\end{align*}
We have also that $\cg_\l\in L^\tau(\RD)$, for any $\tau\in [2,\infty)$.
Moreover, $\cg_\lambda$ is positive, radial, and strictly decreasing (see \cite{ABCT2,FGI22}).

As observed in \cite{ABCT2}, the charge does not depend on $\l$.  Indeed, the following holds.
\begin{lemma} \label{lem:q}
Let $u \in \Ha$ be given. 
Then $q=q(u)$ does not depend on the choice of $\l >0$ 
and so it is determined uniquely.
\end{lemma}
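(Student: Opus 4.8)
The plan is to show that if $u = \phi_\lambda + q_\lambda \cg_\lambda = \phi_\mu + q_\mu \cg_\mu$ with $\phi_\lambda, \phi_\mu \in H^1(\RD)$ and $q_\lambda, q_\mu \in \C$, then $q_\lambda = q_\mu$. The key observation is that $\cg_\lambda - \cg_\mu \in H^{3-\eps}(\RD) \subset H^1(\RD)$ for any $\lambda, \mu > 0$, as recalled just before the lemma.

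First I would subtract the two representations of $u$: from $\phi_\lambda + q_\lambda \cg_\lambda = \phi_\mu + q_\mu \cg_\mu$ we get
\[
\phi_\lambda - \phi_\mu = q_\mu \cg_\mu - q_\lambda \cg_\lambda = q_\mu(\cg_\mu - \cg_\lambda) + (q_\mu - q_\lambda)\cg_\lambda.
\]
The left-hand side lies in $H^1(\RD)$, and since $\cg_\mu - \cg_\lambda \in H^1(\RD)$, the term $q_\mu(\cg_\mu - \cg_\lambda)$ lies in $H^1(\RD)$ as well. Rearranging, $(q_\mu - q_\lambda)\cg_\lambda \in H^1(\RD)$. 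But $\cg_\lambda \not\in H^1(\RD)$, so this forces $q_\mu - q_\lambda = 0$, i.e.\ $q_\lambda = q_\mu$. This also immediately yields $\phi_\lambda - \phi_\mu = q_\lambda(\cg_\mu - \cg_\lambda)$, showing the regular parts differ only by the (explicit, $H^1$) difference of Green's functions, consistent with the claim that $q$ alone is the intrinsic datum.

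I do not anticipate a genuine obstacle here: the only ingredient is the membership facts $\cg_\lambda \not\in H^1(\RD)$ and $\cg_\lambda - \cg_\mu \in H^1(\RD)$, both quoted from \cite{ABCT2, FGI22} at the start of the section. The one point requiring a line of care is to make sure the decomposition of $u$ into regular and singular parts is meaningful — that is, that $\cg_\lambda$ and the constant functions (or rather $\cg_\lambda$ and $H^1$) are "independent enough" — but this is exactly what $\cg_\lambda \not\in H^1(\RD)$ encodes, so no separate argument is needed. Hence the proof is a short three-line computation once the regularity facts are in hand.
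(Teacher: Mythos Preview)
Your argument is correct. The paper does not actually supply a proof of this lemma: it merely records the statement and attributes it to \cite{ABCT2}. Your computation is exactly the standard one---using $\cg_\lambda - \cg_\mu \in H^1(\RD)$ and $\cg_\lambda \notin H^1(\RD)$ to force $q_\lambda = q_\mu$---and relies only on the regularity facts the paper has already listed immediately before the lemma, so nothing further is needed.
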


In \cite{FGI22}, Fukaya-Georgiev-Ikeda stated the following lemma which guarantees that the functional $I_\b$ is well defined.

\begin{lemma} \label{lemm20}
For any $\tau \in [2,\infty)$ and $\o > \omega_\alpha$, there exists a constant $C > 0$ such that
\begin{align*}
\|u\|_\tau \le C \|u\|_{H^1_{\alpha,\o}}
\end{align*}
for all $u \in H^1_\alpha(\mathbb{R}^2)$. 
\end{lemma}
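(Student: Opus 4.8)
The plan is to reduce the claimed embedding to the classical Sobolev inequality by using the canonical decomposition $u=\phi_\lambda+q\cg_\lambda$ of an element of $H^1_\alpha(\mathbb{R}^2)$. The point to keep in mind is that one \emph{cannot} argue via an embedding $H^1_\alpha(\mathbb{R}^2)\hookrightarrow H^1(\mathbb{R}^2)$: this fails precisely because $\cg_\lambda\notin H^1(\mathbb{R}^2)$. So the singular part $q\cg_\lambda$ must be estimated on its own, and here the relevant fact recalled above is that $\cg_\lambda\in L^\tau(\mathbb{R}^2)$ for every $\tau\in[2,\infty)$, while the regular part $\phi_\lambda\in H^1(\mathbb{R}^2)$ is handled by the usual two-dimensional Sobolev embedding.

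Concretely, I would fix $\lambda=\omega$ (any $\lambda\in(\omega_\alpha,\omega]$ works, since the quadratic form $\langle(-\Delta_\alpha+\omega)u,u\rangle$ and the charge $q(u)$ do not depend on this choice; cf. Lemma \ref{lem:q}) and write $u=\phi_\lambda+q\cg_\lambda$ with $\phi_\lambda\in H^1(\mathbb{R}^2)$ and $q=q(u)\in\mathbb{C}$. By the triangle inequality in $L^\tau(\mathbb{R}^2)$,
\[
\|u\|_\tau\le\|\phi_\lambda\|_\tau+|q|\,\|\cg_\lambda\|_\tau .
\]
For the first term, the Gagliardo--Nirenberg/Sobolev embedding $H^1(\mathbb{R}^2)\hookrightarrow L^\tau(\mathbb{R}^2)$, valid for all $\tau\in[2,\infty)$, gives $\|\phi_\lambda\|_\tau\le C_\tau\|\phi_\lambda\|_{H^1}$. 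For the second term, $K_\tau:=\|\cg_\lambda\|_\tau<\infty$ is a fixed constant depending only on $\tau$ and $\omega$.

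It then remains to dominate $\|\phi_\lambda\|_{H^1}$ and $|q|$ by $\|u\|_{H^1_{\alpha,\omega}}$. With $\lambda=\omega>\omega_\alpha$ the term $(\omega-\lambda)\|u\|_2^2$ vanishes and $\alpha+\theta_\lambda>0$ (by the discussion after \eqref{omega-al}), so every summand in
\[
\|u\|_{H^1_{\alpha,\omega}}^2=\|\nabla\phi_\lambda\|_2^2+\lambda\|\phi_\lambda\|_2^2+(\alpha+\theta_\lambda)|q|^2
\]
is nonnegative. Discarding terms with the correct sign yields $\|\nabla\phi_\lambda\|_2^2\le\|u\|_{H^1_{\alpha,\omega}}^2$, $\|\phi_\lambda\|_2^2\le\lambda^{-1}\|u\|_{H^1_{\alpha,\omega}}^2$ and $|q|^2\le(\alpha+\theta_\lambda)^{-1}\|u\|_{H^1_{\alpha,\omega}}^2$, hence $\|\phi_\lambda\|_{H^1}\le(1+\lambda^{-1})^{1/2}\|u\|_{H^1_{\alpha,\omega}}$ and $|q|\le(\alpha+\theta_\lambda)^{-1/2}\|u\|_{H^1_{\alpha,\omega}}$. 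Substituting into the inequality above gives the assertion with
\[
C=C_\tau(1+\lambda^{-1})^{1/2}+K_\tau(\alpha+\theta_\lambda)^{-1/2},
\]
a constant depending only on $\tau$, $\omega$ and $\alpha$.

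There is no serious obstacle here: the argument is essentially bookkeeping once the singular part is split off before any Sobolev-type inequality is applied. The only things to check with a little care are that the decomposition $u=\phi_\lambda+q\cg_\lambda$ is available for the fixed $\lambda$ (which follows from $\cg_\lambda-\cg_\mu\in H^{3-\eps}(\mathbb{R}^2)\subset H^1(\mathbb{R}^2)$ together with Lemma \ref{lem:q}) and that the coefficients $\lambda$ and $\alpha+\theta_\lambda$ entering the norm are strictly positive, which is exactly guaranteed by $\omega_\alpha<\lambda\le\omega$. This is the line of reasoning of Fukaya--Georgiev--Ikeda \cite{FGI22}.
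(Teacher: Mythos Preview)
Your argument is correct. The paper does not give its own proof of this lemma but simply attributes it to \cite{FGI22}; your write-up supplies precisely the expected details (split off the singular part $q\cg_\lambda$, estimate it directly in $L^\tau$, and control the regular part $\phi_\lambda$ via the two-dimensional Sobolev embedding, with the choice $\lambda=\omega$ making all coefficients in $\|u\|_{H^1_{\alpha,\omega}}^2$ nonnegative), which is indeed the line of reasoning in \cite{FGI22}.
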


If not specified otherwise, we will always take $\o > \omega_\alpha$, $\tilde{\omega}>0$, and $\b\ge 0$.
\begin{lemma} \label{lemm14}
There exists $C > 0$ such that
\begin{align*}
\|(u,v)\|_{\mathbb{H}_\alpha}^2 := \langle (- \Delta_\alpha + \omega)u,u \rangle + \|\nabla v\|_2^2 + \tilde{\omega} \|v\|_2^2 \ge C\quad \mathrm{for\ all}\ (u,v) \in \mathcal{N}_\beta.
\end{align*}
\end{lemma}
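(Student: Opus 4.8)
The plan is to show that the constraint $G_\beta(u,v)=0$ forces the squared norm $\|(u,v)\|_{\mathbb{H}_\alpha}^2$ to be bounded below away from zero by exploiting the Sobolev-type embeddings. First I would observe that on $\mathcal{N}_\beta$ we have the identity
\[
\|(u,v)\|_{\mathbb{H}_\alpha}^2 = \langle (-\Delta_\alpha+\omega)u,u\rangle + \|\nabla v\|_2^2 + \tilde\omega\|v\|_2^2 = \|u\|_4^4 + \|v\|_4^4 + 2\beta\int_{\mathbb{R}^2}|uv|^2,
\]
directly from the definition of $G_\beta$. The right-hand side is manifestly nonnegative, and since $\beta\ge 0$ the coupling term only helps; so it suffices to control $\|u\|_4^4+\|v\|_4^4$ (and, if one wants a uniform bound independent of the split between components, also the coupling term) from above by a power of the norm strictly larger than $2$.

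The key estimate is Lemma \ref{lemm20} applied to the first component, namely $\|u\|_4 \le C\|u\|_{H^1_{\alpha,\omega}}$, together with the ordinary Gagliardo--Nirenberg / Sobolev embedding $H^1(\mathbb{R}^2)\hookrightarrow L^4(\mathbb{R}^2)$ for the second component, which gives $\|v\|_4 \le C(\|\nabla v\|_2^2+\tilde\omega\|v\|_2^2)^{1/2}$. Writing $t := \|(u,v)\|_{\mathbb{H}_\alpha}^2$, we then have $\|u\|_{H^1_{\alpha,\omega}}^2 \le t$ and $\|\nabla v\|_2^2 + \tilde\omega\|v\|_2^2 \le t$, hence $\|u\|_4^4 + \|v\|_4^4 \le C^4(\|u\|_{H^1_{\alpha,\omega}}^4 + (\|\nabla v\|_2^2+\tilde\omega\|v\|_2^2)^2) \le 2C^4 t^2$. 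For the coupling term I would use Hölder, $\int_{\mathbb{R}^2}|uv|^2 \le \|u\|_4^2\|v\|_4^2 \le \tfrac12(\|u\|_4^4+\|v\|_4^4) \le C^4 t^2$. Combining, the Nehari identity yields $t \le 2C^4 t^2 + 2\beta C^4 t^2 = 2(1+\beta)C^4 t^2$. Since $(u,v)\neq(0,0)$ implies $t>0$ (note $\langle(-\Delta_\alpha+\omega)u,u\rangle$ is an equivalent norm on the first factor by \eqref{equiv} and positivity for $\omega>\omega_\alpha$, while $\|\nabla v\|_2^2+\tilde\omega\|v\|_2^2$ is an equivalent norm on $H^1$), we may divide by $t$ to get $1 \le 2(1+\beta)C^4 t$, i.e. $t \ge \bigl(2(1+\beta)C^4\bigr)^{-1} =: C' > 0$.

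There is no serious obstacle here; the only point requiring a little care is the dependence of the constant on $\beta$. The statement as written asks for a single $C>0$ for all $(u,v)\in\mathcal{N}_\beta$ with $\beta\ge 0$ fixed, so the bound $C' = \bigl(2(1+\beta)C^4\bigr)^{-1}$ is perfectly adequate. If instead one wanted uniformity in $\beta$, one would keep the coupling term on the left by noting it is already part of the Nehari identity's right-hand side is $\ge 0$ and simply discard it, reducing to $t \le 2C^4 t^2$ and hence $t \ge (2C^4)^{-1}$ independently of $\beta$; I would mention this refinement since it will likely be needed later when $\beta\to\infty$. The main thing to double-check is that $t>0$ strictly, which follows because each summand defining $t$ is a squared norm on the respective component and $(u,v)\neq(0,0)$ forces at least one component to be nonzero.
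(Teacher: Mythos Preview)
Your main argument is correct and is essentially the paper's proof: use the Nehari identity $\|(u,v)\|_{\mathbb{H}_\alpha}^2=\|u\|_4^4+\|v\|_4^4+2\beta\int|uv|^2$, bound the right-hand side via Lemma~\ref{lemm20} and the standard $H^1\hookrightarrow L^4$ embedding to get $t\le C_\beta t^2$, and divide by $t>0$.

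Your closing ``refinement'' about a $\beta$-independent constant is wrong, however. You cannot discard the nonnegative coupling term from the \emph{right-hand side} of the Nehari identity to reach $t\le 2C^4 t^2$; dropping a nonnegative summand there gives only the lower bound $t\ge\|u\|_4^4+\|v\|_4^4$, which is useless. In fact a $\beta$-uniform lower bound on $\|(u,v)\|_{\mathbb{H}_\alpha}^2$ over $\mathcal{N}_\beta$ is impossible: on $\mathcal{N}_\beta$ one has $I_\beta(u,v)=\tfrac14\|(u,v)\|_{\mathbb{H}_\alpha}^2$, so $\inf_{\mathcal{N}_\beta}\|(u,v)\|_{\mathbb{H}_\alpha}^2=4c_\beta$, and by Lemma~\ref{lemm11} $c_\beta<\tilde c_\infty/\beta\to 0$ as $\beta\to\infty$. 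The paper treats the large-$\beta$ regime by rescaling $(w,z)=(\sqrt\beta\,u,\sqrt\beta\,v)$ in Section~\ref{b^*:to:inf}, not by a uniform Nehari bound.
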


\begin{proof}
From $G_\b(u,v) = 0$ and  \cite[Lemma 2.1]{FGI22}, we have
\begin{align*}
\|(u,v)\|_{\mathbb{H}_\alpha}^2 
%\langle (- \Delta_\alpha + \omega)u,u \rangle + \|\nabla v\|_2^2 + \tilde{\omega} \|v\|_2^2
 = \|u\|_4^4 + \|v\|_4^4 + 2 \beta \int_{\mathbb{R}^2} |uv|^2
 \le C (\|u\|_{H^1_{\alpha,\o}}^4 + \|v\|_{H^1}^4)
 \le C \|(u,v)\|_{\mathbb{H}_\alpha}^4.
\end{align*}
\end{proof}

\begin{lemma} \label{lemm15}
For all $(u,v) \in \mathcal{N}_\beta$, $G_\beta'(u,v)[(u,v)] < 0$. 
\end{lemma}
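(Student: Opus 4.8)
The statement to prove is that $G_\beta'(u,v)[(u,v)] < 0$ for every $(u,v) \in \mathcal{N}_\beta$. The plan is to compute this directional derivative explicitly and then exploit the Nehari constraint $G_\beta(u,v) = 0$ together with the lower bound from Lemma \ref{lemm14}. Concretely, since $G_\beta$ is a sum of a quadratic part (the norm term $\|(u,v)\|_{\mathbb{H}_\alpha}^2$) and a quartic part ($-(\|u\|_4^4 + \|v\|_4^4) - 2\beta \int_{\mathbb{R}^2} |uv|^2$), by homogeneity the derivative along $(u,v)$ picks up a factor $2$ from the quadratic piece and a factor $4$ from the quartic piece:
\begin{align*}
G_\beta'(u,v)[(u,v)] = 2\|(u,v)\|_{\mathbb{H}_\alpha}^2 - 4\Big(\|u\|_4^4 + \|v\|_4^4 + 2\beta \int_{\mathbb{R}^2} |uv|^2\Big).
\end{align*}

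Next I would substitute the Nehari identity. On $\mathcal{N}_\beta$ we have $\|(u,v)\|_{\mathbb{H}_\alpha}^2 = \|u\|_4^4 + \|v\|_4^4 + 2\beta \int_{\mathbb{R}^2} |uv|^2$, so the expression collapses to
\begin{align*}
G_\beta'(u,v)[(u,v)] = 2\|(u,v)\|_{\mathbb{H}_\alpha}^2 - 4\|(u,v)\|_{\mathbb{H}_\alpha}^2 = -2\|(u,v)\|_{\mathbb{H}_\alpha}^2.
\end{align*}
Then I would invoke Lemma \ref{lemm14}, which gives $\|(u,v)\|_{\mathbb{H}_\alpha}^2 \ge C > 0$ on $\mathcal{N}_\beta$, to conclude $G_\beta'(u,v)[(u,v)] = -2\|(u,v)\|_{\mathbb{H}_\alpha}^2 \le -2C < 0$.

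The only genuinely delicate point is making sure the differentiation is justified and carried out correctly in the energy space $\mathbb{H}_\alpha$ — in particular that the quadratic form $\langle(-\Delta_\alpha + \omega)u, u\rangle$ is $\lambda$-independent and differentiable in the sense needed, and that the quartic terms are $C^1$ on $\mathbb{H}_\alpha$ (which follows from Lemma \ref{lemm20} and the embedding $\mathbb{H}_\alpha \hookrightarrow L^4 \times L^4$). Once the $C^1$ regularity is in hand, the computation is just Euler's identity for homogeneous functionals applied degree-by-degree, so I expect no real obstacle beyond bookkeeping. An alternative phrasing avoiding derivatives altogether is to differentiate $t \mapsto G_\beta(tu, tv)$ at $t = 1$; this gives the same factor-$2$/factor-$4$ split and may read more cleanly.
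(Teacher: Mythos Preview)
Your proposal is correct and essentially identical to the paper's proof: the paper computes $G_\beta'(u,v)[(u,v)]$ via $\left.\frac{d}{dt}G_\beta(tu,tv)\right|_{t=1}$ (your ``alternative phrasing''), obtains the same expression $-2\|(u,v)\|_{\mathbb{H}_\alpha}^2$, and concludes strict negativity. The only cosmetic difference is that the paper does not invoke Lemma~\ref{lemm14} for the final inequality, since $(u,v)\neq(0,0)$ on $\mathcal{N}_\beta$ already forces $\|(u,v)\|_{\mathbb{H}_\alpha}^2>0$; your use of Lemma~\ref{lemm14} is harmless but unnecessary.
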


\begin{proof}
Observe that
\begin{align*}
G_\b(u,v)[(u,v)] &= \left. \frac{d}{dt} G_\b(tu,tv) \right|_{t=1}
\\&= 2 \|(u,v)\|_{\mathbb{H}_\alpha}^2 - 4(\|u\|_4^4 + \|v\|_4^4) - 8 \beta \int_{\RD} |uv|^2
\\&= - 2 \|(u,v)\|_{\mathbb{H}_\alpha}^2 < 0.
\end{align*}
\end{proof}

\begin{lemma}\label{min-crit}
If $(u_0,v_0)\in \mathbb{H}_\a$ is a minimizer for $c_\beta$, then $I_\beta'(u_0,v_0) = 0$. 
\end{lemma}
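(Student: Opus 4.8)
The plan is to use the standard Lagrange multiplier argument for minimization on the Nehari manifold, relying on Lemma~\ref{lemm15} to rule out the degenerate case. Suppose $(u_0,v_0)\in\mathcal{N}_\beta$ is a minimizer for $c_\beta$. Since $G_\beta$ is of class $C^1$ on $\mathbb{H}_\alpha$ and, by Lemma~\ref{lemm15}, $G_\beta'(u_0,v_0)[(u_0,v_0)]=-2\|(u_0,v_0)\|_{\mathbb{H}_\alpha}^2\neq 0$, the point $(u_0,v_0)$ is a regular point of $G_\beta$, so $\mathcal{N}_\beta$ is a $C^1$ manifold near $(u_0,v_0)$ and the Lagrange multiplier rule applies: there exists $\mu\in\mathbb{R}$ with
\[
I_\beta'(u_0,v_0) = \mu\, G_\beta'(u_0,v_0)\quad\text{in } \mathbb{H}_\alpha^*.
\]

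The next step is to show $\mu=0$. Testing the identity above with $(u_0,v_0)$ and using $(u_0,v_0)\in\mathcal{N}_\beta$ gives
\[
I_\beta'(u_0,v_0)[(u_0,v_0)] = \|(u_0,v_0)\|_{\mathbb{H}_\alpha}^2 - (\|u_0\|_4^4+\|v_0\|_4^4) - 2\beta\int_{\mathbb{R}^2}|u_0v_0|^2 = G_\beta(u_0,v_0) = 0,
\]
while the right-hand side is $\mu\,G_\beta'(u_0,v_0)[(u_0,v_0)] = -2\mu\,\|(u_0,v_0)\|_{\mathbb{H}_\alpha}^2$. Since $(u_0,v_0)\neq(0,0)$, Lemma~\ref{lemm14} gives $\|(u_0,v_0)\|_{\mathbb{H}_\alpha}^2\ge C>0$, hence $\mu=0$ and therefore $I_\beta'(u_0,v_0)=0$, as claimed.

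I expect the only point requiring care to be the justification that the Lagrange multiplier rule is legitimately applicable in this low-regularity functional setting, i.e.\ that $I_\beta$ and $G_\beta$ are genuinely $C^1$ on $\mathbb{H}_\alpha$ (the quadratic part is clear from the definition of $\|\cdot\|_{H^1_{\alpha,\omega}}$, and the quartic terms are $C^1$ by Lemma~\ref{lemm20} together with the continuous embeddings $H^1_\alpha(\mathbb{R}^2)\hookrightarrow L^4$, $H^1(\mathbb{R}^2)\hookrightarrow L^4$ and the boundedness of the associated multilinear forms). Once that regularity is in hand, the rest is the routine computation above; no compactness or variational-principle refinements are needed here since we already assume a minimizer exists.
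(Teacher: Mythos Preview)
Your proof is correct and follows essentially the same Lagrange multiplier argument as the paper: apply the multiplier rule on $\mathcal{N}_\beta$, test against $(u_0,v_0)$, and use Lemma~\ref{lemm15} to conclude the multiplier vanishes. Your version is slightly more detailed in justifying the applicability of the multiplier rule (regularity of $G_\beta$ and non-degeneracy of the constraint), which the paper takes for granted.
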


\begin{proof}
Let $(u_0,v_0)\in  \mathbb{H}_\a$ be a minimizer for $c_\beta$. Then there exists a Lagrange multiplier $\eta \in \mathbb{R}$ such that $I_\beta'(u_0,v_0) = \eta G_\beta'(u_0,v_0)$. Moreover, it follows that
\begin{align*}
0 = I_\beta'(u_0,v_0)[(u_0,v_0)] = \eta G_\beta'(u_0,v_0)[(u_0,v_0)]. 
\end{align*}
From Lemma \ref{lemm15}, we have $\eta = 0$ and hence $I_\beta'(u_0,v_0) = 0$. 
\end{proof}

\begin{lemma} \label{lemm16}
Let $(u,v) \in \mathbb{H}_\alpha \setminus \{(0,0)\}$. Then the function $t \mapsto I_\b(tu,tv)$ has a unique maximum point $t_0$ on $(0,\infty)$ and $(t_0 u, t_0 v) \in \mathcal{N}_\beta$. 
\end{lemma}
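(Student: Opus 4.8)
The plan is to study the real function $g(t) := I_\beta(tu,tv)$ for $t > 0$ and show it has the shape of a mountain pass profile: it starts at $0$, increases, reaches a single maximum, and then decreases to $-\infty$. Writing $A := \|(u,v)\|_{\mathbb{H}_\alpha}^2 = \langle(-\Delta_\alpha+\omega)u,u\rangle + \|\nabla v\|_2^2 + \tilde\omega\|v\|_2^2$ and $B := \|u\|_4^4 + \|v\|_4^4 + 2\beta\int_{\RD}|uv|^2$, we have $g(t) = \frac{t^2}{2}A - \frac{t^4}{4}B$. Since $(u,v) \neq (0,0)$, Lemma \ref{lemm20} (together with the equivalence of norms) gives $A > 0$; note $A$ is positive because it is (equivalent to) a squared norm on $\mathbb{H}_\alpha$. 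Also $B > 0$: indeed $\|u\|_4^4 + \|v\|_4^4 > 0$ since $(u,v) \neq (0,0)$ and $\beta \int |uv|^2 \ge 0$.

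Next I would simply differentiate: $g'(t) = t A - t^3 B = t(A - t^2 B)$. For $t > 0$ this vanishes exactly at $t_0 := \sqrt{A/B}$, is positive on $(0,t_0)$ and negative on $(t_0,\infty)$. Hence $t_0$ is the unique critical point of $g$ on $(0,\infty)$ and it is a strict global maximum there. This establishes the first assertion.

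Finally I would verify the membership $(t_0 u, t_0 v) \in \mathcal{N}_\beta$. By definition $\mathcal{N}_\beta$ requires $(t_0u,t_0v) \neq (0,0)$, which holds since $t_0 > 0$ and $(u,v) \neq (0,0)$, and requires $G_\beta(t_0 u, t_0 v) = 0$. Now $G_\beta(tu,tv) = t^2 A - t^4 B = t^2(A - t^2 B)$, which vanishes precisely when $t^2 = A/B$, i.e. at $t = t_0$. (Equivalently, one can note $G_\beta(tu,tv) = t\, g'(t)$, so the Nehari condition at $t_0$ is exactly the criticality of $g$ at $t_0$.) This completes the proof.

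There is no real obstacle here: the statement is the standard fibering-map lemma for a Nehari manifold associated with a quartic functional, and the only points worth a word are that $A$ is strictly positive (which is where one invokes that $\|\cdot\|_{\mathbb{H}_\alpha}$ is genuinely a norm, using $\omega > \omega_\alpha$ so that $\alpha + \theta_\lambda > 0$ for the admissible $\lambda$) and that $B > 0$. Everything else is elementary one-variable calculus, so I would keep the write-up to a few lines.
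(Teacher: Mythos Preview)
Your proof is correct and follows essentially the same approach as the paper's own proof: both reduce to analyzing the one-variable function $t\mapsto \tfrac{A}{2}t^2-\tfrac{B}{4}t^4$ with $A,B>0$, find the unique critical point $t_0=\sqrt{A/B}$, and check it is a maximum. The only cosmetic difference is that the paper keeps $\|u\|_4^4+\|v\|_4^4$ and $2\beta\int|uv|^2$ as separate constants, whereas you merge them into a single $B$.
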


\begin{proof}
Let $f(t) := I_\beta(tu,tv) = \frac{A}{2} t^2 - \frac{B}{4} t^4 - \frac{\beta}{2} C t^4$, where
\begin{align*}
A = \|(u,v)\|_{\mathbb{H}_\alpha}^2 > 0,\quad
B = \|u\|_4^4 + \|v\|_4^4 > 0,\quad 
C = \int_{\RD} |uv|^2 \ge 0.
\end{align*}
Then $f'(t) = A t - B t^3 - 2 \beta C t^3$. If we assume $f'(t) = 0$ and $t > 0$, then $t^2 = t_0^2 := \frac{A}{B + 2 \beta C} > 0$. Moreover, since $f''(t_0) = A - 3 B t_0^2 - 6 \beta C t_0^2 = -2 A < 0$, $t_0$ is a unique maximum point of the function $f(t)$. 
\end{proof}

Arguing as before, we have the following.
\begin{lemma} \label{lemm16-bis}
Let $(u,v) \in \mathbb{H} \setminus \{(0,0)\}$. Then the function $t \mapsto I_\b^0(tu,tv)$ has a unique maximum point $t_0$ on $(0,\infty)$ and $(t_0 u, t_0 v) \in \mathcal{N}_\beta^0$. 
\end{lemma}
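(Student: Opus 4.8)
The plan is to repeat verbatim the one--variable calculus of Lemma~\ref{lemm16}, replacing the quadratic form $\langle(-\Delta_\alpha+\omega)u,u\rangle$ by $\|\nabla u\|_2^2+\omega\|u\|_2^2$. Concretely, I would set
\[
f(t):=I_\beta^0(tu,tv)=\frac{A}{2}t^2-\frac{B}{4}t^4-\frac{\beta}{2}C\,t^4,\qquad t>0,
\]
where
\[
A:=\|\nabla u\|_2^2+\omega\|u\|_2^2+\|\nabla v\|_2^2+\tilde{\omega}\|v\|_2^2,\quad B:=\|u\|_4^4+\|v\|_4^4,\quad C:=\int_{\RD}|uv|^2\ge0.
\]

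First I would record that $A>0$ and $B>0$. Since $\omega>\omega_\alpha>0$ and $\tilde{\omega}>0$, the quantity $A$ is (the square of) a norm on $\mathbb{H}$ equivalent to the standard $H^1(\RD)\times H^1(\RD)$ norm, so $A>0$ because $(u,v)\neq(0,0)$; moreover, $(u,v)\neq(0,0)$ forces at least one of $u,v$ to be nonzero in $H^1(\RD)$, hence nonzero in $L^4(\RD)$ by the Sobolev embedding, so $B>0$.

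Next, the critical point analysis is identical to that of Lemma~\ref{lemm16}: from $f'(t)=At-(B+2\beta C)t^3=t\bigl(A-(B+2\beta C)t^2\bigr)$ the only zero of $f'$ in $(0,\infty)$ is $t_0:=\bigl(A/(B+2\beta C)\bigr)^{1/2}>0$, and $f''(t_0)=A-3(B+2\beta C)t_0^2=-2A<0$, so $t_0$ is the unique maximum point of $f$ on $(0,\infty)$ (with $f(0)=0$ and $f(t)\to-\infty$ as $t\to\infty$, no maximizer is lost at the endpoints). Finally, to show $(t_0u,t_0v)\in\mathcal{N}_\beta^0$ I would invoke the homogeneity identity $t\,f'(t)=t\,\frac{d}{dt}I_\beta^0(tu,tv)=(I_\beta^0)'(tu,tv)[(tu,tv)]=G_\beta^0(tu,tv)$, valid for every $t>0$; evaluating at $t=t_0$ and using $f'(t_0)=0$ yields $G_\beta^0(t_0u,t_0v)=0$. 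There is essentially no obstacle here—the proof is literally that of Lemma~\ref{lemm16}; the only point deserving a remark is the positivity of $A$, which follows immediately from $\omega,\tilde{\omega}>0$.
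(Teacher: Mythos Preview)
Your proposal is correct and matches the paper's approach exactly: the paper simply says ``Arguing as before'' (i.e., as in Lemma~\ref{lemm16}), and you have carried out precisely that argument with the quadratic form $\langle(-\Delta_\alpha+\omega)u,u\rangle$ replaced by $\|\nabla u\|_2^2+\omega\|u\|_2^2$. Your added remarks on the positivity of $A$ and $B$ and the explicit identity $t f'(t)=G_\beta^0(tu,tv)$ are routine and correct.
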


\begin{lemma} \label{lemm17}
Let $(u,v)\in \mathbb{H}_\a\setminus\{(0,0)\}$. If $G_\beta(u,v) < 0$, then $J(u,v) > c_\beta$, where
\begin{align*}
J(u,v) := \frac{1}{4} \|(u,v)\|_{\mathbb{H}_\alpha}^2.
\end{align*}
\end{lemma}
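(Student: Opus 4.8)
The plan is to use the standard fibering-map argument for Nehari manifolds. Given $(u,v)\in\mathbb{H}_\alpha\setminus\{(0,0)\}$, consider $f(t):=I_\beta(tu,tv)=\tfrac{A}{2}t^2-\tfrac{B}{4}t^4-\tfrac{\beta}{2}Ct^4$ with $A=\|(u,v)\|_{\mathbb{H}_\alpha}^2>0$, $B=\|u\|_4^4+\|v\|_4^4>0$ and $C=\int_{\RD}|uv|^2\ge0$, exactly as in Lemma~\ref{lemm16}. By that lemma $f$ has a unique maximum point $t_0\in(0,\infty)$ with $t_0^2=A/(B+2\beta C)$, and $(t_0u,t_0v)\in\mathcal{N}_\beta$, so $I_\beta(t_0u,t_0v)\ge c_\beta$.

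Next I would compute $I_\beta$ on the Nehari manifold. For any $(\bar u,\bar v)\in\mathcal{N}_\beta$ we have $G_\beta(\bar u,\bar v)=0$, i.e. $\|(\bar u,\bar v)\|_{\mathbb{H}_\alpha}^2=\|\bar u\|_4^4+\|\bar v\|_4^4+2\beta\int_{\RD}|\bar u\bar v|^2$, and substituting into $I_\beta$ gives
\[
I_\beta(\bar u,\bar v)=\Bigl(\tfrac12-\tfrac14\Bigr)\|(\bar u,\bar v)\|_{\mathbb{H}_\alpha}^2=\tfrac14\|(\bar u,\bar v)\|_{\mathbb{H}_\alpha}^2=J(\bar u,\bar v).
\]
In particular $I_\beta(t_0u,t_0v)=J(t_0u,t_0v)$.

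Now I use the hypothesis $G_\beta(u,v)<0$ to locate $t_0$. Since $G_\beta(tu,tv)=t^2 A-t^4(B+2\beta C)$, the condition $G_\beta(u,v)<0$ means $A<B+2\beta C$, hence $t_0^2=A/(B+2\beta C)<1$, so $0<t_0<1$. Therefore
\[
J(t_0u,t_0v)=\tfrac14 t_0^2\,A<\tfrac14 A=J(u,v).
\]
Combining the three displayed facts,
\[
c_\beta\le I_\beta(t_0u,t_0v)=J(t_0u,t_0v)<J(u,v),
\]
which is the claim.

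There is really no serious obstacle here; the only point requiring a little care is making sure the identity $I_\beta=J$ on $\mathcal{N}_\beta$ is applied correctly (it is immediate from $G_\beta=0$ since the quadratic part contributes $\tfrac12$ and the quartic terms, rewritten via $G_\beta=0$, subtract off $\tfrac14$), and that the strict inequality $t_0<1$—rather than $t_0\le1$—follows from the strict inequality $G_\beta(u,v)<0$. The rest is a direct substitution using Lemma~\ref{lemm16}.
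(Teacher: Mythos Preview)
Your proof is correct and follows essentially the same fibering-map argument as the paper: use Lemma~\ref{lemm16} to project onto $\mathcal{N}_\beta$, observe $I_\beta=J$ there, deduce $t_0<1$ from $G_\beta(u,v)<0$, and conclude $c_\beta\le J(t_0u,t_0v)=t_0^2 J(u,v)<J(u,v)$. The only difference is that you spell out the formula $t_0^2=A/(B+2\beta C)$ and the identity $I_\beta=J$ on $\mathcal{N}_\beta$ explicitly, while the paper leaves these implicit.
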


\begin{proof}
From Lemma \ref{lemm16}, there exists $t_0 > 0$ such that $I_\beta(t_0 u,t_0 v) = \max_{t > 0} I_\beta(t u,t v)$ and $(t_0 u,t_0 v) \in \mathcal{N}_\beta$. Since $G_\beta(t_0 u,t_0 v) = 0$ and $G_\beta(u,v) < 0$, it follows that $t_0 < 1$. Therefore
\begin{align*}
c_\beta \le I_\beta(t_0 u,t_0 v) = J(t_0 u,t_0 v) = t_0^2 J(u,v) < J(u,v).
\end{align*}
\end{proof}

We are now ready to prove Theorem \ref{theo1}.

\begin{proof}[\textbf{Proof of Theorem \ref{theo1}}]
Fix  $\omega_\alpha < \lambda \le \omega$. Let $\{(u_n,v_n)\}_{n} \subset \mathcal{N}_\beta$ be a minimizing sequence for $c_\beta$, namely, $I_\beta(u_n,v_n) \to c_\beta$. Then
\begin{align*}
c_\beta+o(1) = I_\beta(u_n,v_n) = J(u_n,v_n)
 = \frac{1}{4} \|(u_n,v_n)\|_{\mathbb{H}_\alpha}^2 \ge C(\|\phi_{\lambda,n}\|_{H^1}^2 + |q_n|^2 + \|v_n\|_{H^1}^2),
\end{align*}
where $u_n = \phi_{\lambda,n} + q_n \cg_\l $. Therefore $\{\phi_{\lambda,n}\}_n$ and $\{v_n\}_n$ are bounded in $H^1(\RD)$ and $\{q_n\}_n$ is bounded in $\C$. Thus, up to a subsequence, there exist $\phi_{\lambda,0},v_0 \in H^1(\RD)$ and $q_0 \in \C$ such that
\begin{align*}
&\phi_{\lambda,n} \rightharpoonup \phi_{\lambda,0}\ \mathrm{weakly\ in}\ H^1(\RD),\\
&q_n \to q_0\ \mathrm{in}\ \C,\\
&v_n \rightharpoonup v_0\ \mathrm{weakly\ in}\ H^1(\RD).
\end{align*}
%We will prove that  $q_0 \neq 0$. 

\textbf{Claim 1.}\quad Up to a subsequence, there exists $C > 0$ such that $\|u_n\|_4^4 + \|v_n\|_4^4 \ge C$ for all $n \in \N$. 
\\
Assume, by contradiction, that $\|u_n\|_4^4 + \|v_n\|_4^4 \to 0$. Then $\int_\RD |u_n v_n|^2 \to 0$. Since $(u_n,v_n) \in \mathcal{N}_\beta$, we have
\begin{multline*}
\|\nabla \phi_{\lambda,n}\|_2^2 + \lambda \|\phi_{\lambda,n}\|_2^2 + (\omega - \lambda) \|u_n\|_2^2 + (\alpha + \theta_\lambda) |q_n|^2 + \|\nabla v_n\|_2^2 + \tilde{\omega} \|v_n\|_2^2
\\
= \|u_n\|_4^4+\|v_n\|_4^4 + 2 \beta \int_\RD |u_n v_n|^2
\end{multline*}
and so $\|\phi_{\lambda,n}\|_{H^1} \to 0$, $|q_n| \to 0$ and $\|v_n\|_{H^1} \to 0$. On the other hand, from Lemma \ref{lemm14}, there exists $C > 0$ such that $\|\phi_{\lambda,n}\|_{H^1}^2 + |q_n|^2 + \|v_n\|_{H^1}^2 \ge C$. This is a contradiction, proving the claim. 

\textbf{Claim 2.}\quad $q_0 \neq 0$. 
\\
Assume $q_0 = 0$. Then 
\begin{align*}
\|u_n - \phi_{\lambda,n}\|_p &\to 0\quad \mathrm{for}\ 2 \le p< \infty,\\ 
\left| \|u_n\|_p^p - \|\phi_{\lambda,n}\|_p^p\right| &\to 0\quad \mathrm{for}\ 2 \le p< \infty,\\
\left| \int_\RD |u_n v_n|^2 - \int_\RD |\phi_{\lambda,n} v_n|^2 \right| &\to 0. 
\end{align*}
%Since $\|u_n\|_4^4 + \|v_n\|_4^4 \ge C$ for all $n \in \N$, 
By Claim 1 we have, up to a subsequence, 
\begin{align*}
\|\phi_{\lambda,n}\|_4^4 + \|v_n\|_4^4 \ge \frac{C}{2}\quad \mathrm{for\ all}\ n \in \N.
\end{align*}
From Lemma \ref{lemm16-bis}, there exists $t_n > 0$ such that $(t_n \phi_{\lambda,n},t_n v_n) \in \mathcal{N}_\beta^0$ and so
\begin{align*}
\|\nabla \phi_{\lambda,n}\|_2^2 + \omega \|\phi_{\lambda,n}\|_2^2 + \|\nabla v_n\|_2^2 + \tilde{\omega} \|v_n\|_2^2 = t_n^2 \left(\|\phi_{\lambda,n}\|_4^4 + \|v_n\|_4^4 + 2 \beta \int_\RD |\phi_{\lambda,n} v_n|^2\right).
\end{align*}
This implies that $\{t_n\}$ is bounded and, since
\begin{align*}
c_\beta + o(1) = I_\beta(u_n,v_n) \ge I_\beta(t_n u_n, t_n v_n)
% &= \frac{t_n^2}{2} \{\|\nabla \phi_{\lambda,n}\|_2^2 + \lambda \|\phi_{\lambda,n}\|_2^2 + (\omega - \lambda) \|u_n\|_2^2 + (\alpha + \theta_\lambda) |q_n|^2 + \|\nabla v_n\|_2^2 + \tilde{\omega} \|v_n\|_2^2
%\}\\
%&\quad \ - t_n^4 \left(\frac{1}{4} (\|u_n\|_4^4 + \|v_n\|_4^4) + \frac{\beta}{2} \int_\RD |u_n v_n|^2\right)
 = I_\beta^0(t_n \phi_{\lambda,n},t_n v_n) + o(1) \ge c_\beta^0 + o(1),
\end{align*}
taking the limit, we have $c_\beta \ge c_\beta^0$. On the other hand, we have $c_\b < c_\b^0$, reaching a contradiction. Thus we have $q_0 \neq 0$. 

\textbf{Claim 3.} \quad $J(u_0,v_0) \le c_\beta$.
\\
Since 
\begin{align*}
\phi_{\lambda,n} &\rightharpoonup \phi_{\lambda,0}\ \mathrm{weakly\ in}\ H^1(\RD),\\
q_n &\to q_0\ \mathrm{in}\ \C,\\
v_n &\rightharpoonup v_0\ \mathrm{weakly\ in}\ H^1(\RD),
\end{align*}
we have
\begin{align*}
c_\beta &= \liminf_{n \to \infty} I_\beta(u_n,v_n)
 = \liminf_{n \to \infty} J(u_n,v_n)\\ 
&= \liminf_{n \to \infty} \frac{1}{4} \{\|\nabla \phi_{\lambda,n}\|_2^2 + \lambda \|\phi_{\lambda,n}\|_2^2 + (\omega - \lambda) \|u_n\|_2^2 + (\alpha + \theta_\alpha) |q_n|^2 + \|\nabla v_n\|_2^2 + \tilde{\omega} \|v_n\|_2^2\}\\
 &\ge \frac{1}{4} \{\|\nabla \phi_{\lambda,0}\|_2^2 + \lambda \|\phi_{\lambda,0}\|_2^2 + (\omega - \lambda) \|u_0\|_2^2 + (\alpha + \theta_\alpha) |q_0|^2 + \|\nabla v_0\|_2^2 + \tilde{\omega} \|v_0\|_2^2\}
= J(u_0,v_0),
\end{align*}
concluding the proof of the claim.

\textbf{Claim 4.} \quad $(u_0,v_0) \in \mathcal{N}_\beta$, namely, $G_\beta(u_0,v_0) = 0$. 
\\
We argue by contradiction. There are two alternatives.\\
\textsc{Case 1:}\quad $G_\beta(u_0,v_0) < 0$. \\
From Lemma \ref{lemm17}, we have $J(u_0,v_0) > c_\beta$. This contradicts Claim 3. So this case does not occur.\\
\textsc{Case 2:}\quad $G_\beta(u_0,v_0) > 0$.
Since
\begin{align*}
0 = G_\beta(u_n,v_n) = G_\beta(u_0,v_0) + G_\beta(u_n - u_0,v_n - v_0) + o(1),
\end{align*}
them for $n$ sufficiently large, 
\begin{align*}
G_\beta(u_n - u_0,v_n - v_0) < 0.
\end{align*}
Then from Lemma \ref{lemm17}, $J(u_n- u_0,v_n - v_0) > c_\beta$. Moreover, 
\begin{align*}
c_\beta + o(1) = J(u_n,v_n) = J(u_0,v_0) + J(u_n - u_0,v_n - v_0) + o(1).
\end{align*}
Then $J(u_0,v_0) \le 0$. Since $J(u_0,v_0) \ge 0$, we have $\phi_{\lambda,0} = v_0 = 0$ and $q_0 = 0$. This contradicts $q_0 \neq 0$. Case 2 cannot occur and this prove the claim.

\textbf{Claim 5.} \quad $\phi_{\lambda,n} \to \phi_{\lambda,0}$, $v_n \to v_0$ in $H^1(\RD)$. 
\\Set $u_0 = \phi_{\lambda,0} + q_0 \cg_\l $. Being $(u_0,v_0) \in \mathcal{N}_\beta$, we deduce that
\begin{align*}
c_\beta = \liminf_{n \to \infty} I_\beta(u_n,v_n)
 = \liminf_{n \to \infty} J(u_n,v_n) \ge J(u_0,v_0) = I_\beta(u_0,v_0) \ge c_\beta. 
\end{align*}
Therefore $J(u_n,v_n) \to J(u_0,v_0)$ and $I_\beta(u_0,v_0) = c_\beta$. Hence $(u_0,v_0)$ is a minimizer for $c_\beta$. From Lemma \ref{min-crit}, $(u_0,v_0)$ is a ground state of \eqref{eq4}. 
Moreover, since $\phi_{\lambda,n} \rightharpoonup \phi_{\lambda,0}$, $v_n \rightharpoonup v_0$ weakly in $H^1(\RD)$, we have $\phi_{\lambda,n} \to \phi_{\lambda,0}$, $v_n \to v_0$ in $H^1(\RD)$.
Moreover, by the same argument as, in the proof of Appendix A in \cite{ABCT2}, we have $\phi_{\lambda,0} \in H^2(\mathbb{R}^2)$ and $\phi_{\lambda,0}(0) = (\alpha + \theta_\lambda) q_0$. Since $q_0 \neq 0$, we have $\phi_{\lambda,0} \neq 0$. 
\end{proof}

In order to apply Theorem \ref{theo1}, we are interested under which conditions we have $c_\b<c_\b^0$.
\\
Observe that, clearly, for any $\o,\tilde{\omega}, \b$, we have
\[
c_\beta (\o,\tilde{\omega})\le c_\beta^0(\o,\tilde{\omega}).
\] 
In the following, we study more precisely the relation between $c_\b$ and $c_\b^0$. 
First, we have the following.

\begin{lemma} \label{lemm1}
%\orange{For any $\o > \omega_0$, $\tilde{\omega}>0$,}
If $c_\beta = c_\beta^0$, then for any minimizer $(u_0,v_0)\in \mathbb{H}$ for $c_\beta^0$, $(u_0,v_0)$ becomes a minimizer for $c_\beta$ and $u_0 = 0$. 
\end{lemma}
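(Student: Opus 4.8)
The plan is to use the general inequality $c_\beta(\o,\tilde\o)\le c_\beta^0(\o,\tilde\o)$ together with the embedding $\mathbb{H}\hookrightarrow \mathbb{H}_\a$ (a pair $(u,v)\in\mathbb{H}$ is just a regular element of $\mathbb{H}_\a$, i.e. with charge $q(u)=0$), and then exploit the strict positivity $(\a+\theta_\l)>0$ in the $\Hao$-norm to force the charge of a minimizer to vanish. Concretely, let $(u_0,v_0)\in\mathbb{H}$ be a minimizer for $c_\beta^0$, so $(u_0,v_0)\in\mathcal{N}_\beta^0$ and $I_\beta^0(u_0,v_0)=c_\beta^0$. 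Since $u_0\in H^1(\RD)$ has charge $q(u_0)=0$, we have $\langle(-\Delta_\a+\o)u_0,u_0\rangle=\|\n u_0\|_2^2+\o\|u_0\|_2^2$, hence $G_\beta(u_0,v_0)=G_\beta^0(u_0,v_0)=0$, so $(u_0,v_0)\in\mathcal{N}_\beta$, and likewise $I_\beta(u_0,v_0)=I_\beta^0(u_0,v_0)=c_\beta^0=c_\beta$. Therefore $(u_0,v_0)$ is a minimizer for $c_\beta$.

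For the second assertion, I would argue by contradiction. Suppose $(u_0,v_0)$ is a minimizer for $c_\beta^0$ with $u_0\neq 0$; I claim $(u_0,v_0)$ viewed in $\mathbb{H}_\a$ with charge $0$ is then \emph{not} a minimizer for $c_\beta$, contradicting what we just proved — unless $u_0=0$. The idea is to perturb $u_0$ by adding a small singular piece and show the Nehari value of $I_\beta$ strictly decreases. Precisely, for $q\in\C$ small consider $u_q:=u_0+q\cg_\l$, which has charge $q$, and compute $\langle(-\Delta_\a+\o)u_q,u_q\rangle=\|\n u_0\|_2^2+\l\|\n\text{-part}\|$... more carefully, writing $u_q=\phi_\l+q\cg_\l$ with $\phi_\l=u_0$, one gets $\|u_q\|_{\Hao}^2=\|\n u_0\|_2^2+\l\|u_0\|_2^2+(\o-\l)\|u_0+q\cg_\l\|_2^2+(\a+\theta_\l)|q|^2$. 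The key point: the quartic terms $\|u_q\|_4^4$ and $\int|u_q v_0|^2$ depend on $q$, and by testing the Euler--Lagrange equation for $u_0$ (which solves $-\Delta u_0+\o u_0=|u_0|^2u_0+\beta u_0|v_0|^2$ in $H^1$) against $\cg_\l$ one finds the first-order variation in $q$ of the constrained functional is governed by $\langle|u_0|^2u_0+\beta u_0|v_0|^2,\cg_\l\rangle$, which is strictly positive since $u_0,\cg_\l>0$ (after a standard sign normalization of the minimizer). Hence along a suitable direction $q$ the mountain-pass value $\max_{t>0}I_\beta(tu_q,tv_0)$ drops strictly below $I_\beta^0(u_0,v_0)=c_\beta$, i.e. $c_\beta<c_\beta$, a contradiction. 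Therefore $u_0=0$.

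Alternatively — and this is probably the cleaner route to write up — one invokes Theorem \ref{theo3} (or Remark \ref{rem5}) structurally: when $c_\beta=c_\beta^0$ one has $\beta\le\beta^*$, every minimizer for $c_\beta$ is scalar, and if $v_0\neq 0$ were the surviving component of a minimizer for $c_\beta$ coming from $\mathbb{H}$, then $u_0=0$ automatically; so it only remains to exclude the case where the scalar minimizer has $v_0=0$, $u_0\neq 0$ but $q(u_0)=0$ — which is impossible because, as recalled after the definition of $d(\o)$, \cite[Theorem 1.11]{ABCT2} says every minimizer of $d(\o)$ is singular, while $u_0\in H^1(\RD)$ is regular. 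Hence the only possibility is $u_0=0$. I would choose whichever of these is logically available at this point in the paper; if Theorem \ref{theo3} is proved later, the perturbation argument above is self-contained.

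The main obstacle is the second assertion. The first part is a routine ``same functional on a smaller space'' observation. The delicate point is showing that a nonzero regular first component cannot persist in a minimizer for $c_\beta$: this requires either the explicit perturbation computation with $\cg_\l$ and the sign of $\langle|u_0|^2u_0+\beta u_0|v_0|^2,\cg_\l\rangle$, or a clean appeal to the already-established fact that minimizers of $d(\o)$ are singular. I expect the cited structural facts (positivity of minimizers, singularity of $d(\o)$-minimizers from \cite{ABCT2}) to do the heavy lifting and keep the proof short.
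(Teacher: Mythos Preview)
Your first assertion (that a minimizer for $c_\beta^0$ is automatically a minimizer for $c_\beta$ when $c_\beta=c_\beta^0$) is correct and identical to the paper's.

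For $u_0=0$, the paper takes a shorter and more direct route than either of yours. Having established that $(u_0,v_0)$ minimizes $c_\beta$, Lemma~\ref{min-crit} gives $I_\beta'(u_0,v_0)=0$. The regularity argument from \cite[Appendix~A]{ABCT2} then shows that any critical point of $I_\beta$ satisfies $\phi_{\lambda,0}\in H^2(\RD)$ together with the boundary condition $\phi_{\lambda,0}(0)=(\alpha+\theta_\lambda)q_0$. Since $u_0\in H^1(\RD)$ forces $q_0=0$, one obtains $u_0(0)=\phi_{\lambda,0}(0)=0$; normalizing $u_0\ge 0$ and using $(I_\beta^0)'(u_0,v_0)=0$, the strong maximum principle yields $u_0\equiv 0$. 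No perturbation or forward reference is needed.

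Your perturbation idea can be made to work, but the sketch misidentifies the key quantity. The derivative at $q=0$ of $\max_{t>0} I_\beta(t u_q,t v_0)$ is $\tfrac{1}{4}(2A'(0)-B'(0))$ with $A'(0)=2(\omega-\lambda)\langle u_0,\cg_\lambda\rangle$ and $B'(0)=4\langle |u_0|^2u_0+\beta u_0|v_0|^2,\cg_\lambda\rangle$; after inserting the Euler--Lagrange equation for $u_0$ and the distributional identity $(-\Delta+\lambda)\cg_\lambda=\delta_0$, this collapses to $-u_0(0)$, not to $\langle |u_0|^2u_0+\beta u_0|v_0|^2,\cg_\lambda\rangle$ alone. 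So both arguments ultimately hinge on $u_0(0)$, and the paper simply reads this off from the boundary condition rather than computing a variation. Your alternative ``structural'' route via Theorem~\ref{theo3} is genuinely circular here: the step $c_\beta=c_\beta^0\Rightarrow\beta\le\beta^*$ is part of Theorem~\ref{theo5}, whose proof (through Lemma~\ref{lemm4}) already uses Lemma~\ref{lemm1}.
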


\begin{proof}
Let $(u_0,v_0) \in \mathcal{N}_\beta^0$ be a minimizer for $c_\b^0$. 
We may assume $(u_0,v_0)$ is a non-negative minimizer.
%%%
Then it follows that $(I_\beta^0)'(u_0,v_0) = 0$. 
Moreover, $(u_0,v_0)$ becomes a minimizer for $c_\beta(\omega,\tilde{\omega})$, 
%\red{I rewrote the proof in a way that did not use Lemma \ref{min-crit}.
%Because we can show that if $(I_\beta^0)'(u_0,v_0) = 0$ and $u_0 \in H^1(\RD)$, then $I_\beta'(u_0,v_0) = 0$ holds. I'M NOT SO SURE ABOUT THIS POINT AND SO I WOULD PREFER TO USE LEMMA \ref{min-crit}}
 and so, by Lemma \ref{min-crit},
%\orange{Since $u_0 \in H^1(\RD)$ and $(I_\beta^0)'(u_0,v_0) = 0$, it follows that} 
$I_\beta'(u_0,v_0) = 0$. 
Fixing $\o_\a<\l<\o$,
by the same argument as in the proof of Appendix A in \cite{ABCT2}, we have $\phi_{\lambda,0} \in H^2(\mathbb{R}^2)$ and $\phi_{\lambda,0}(0) = (\alpha + \theta_\lambda) q_0$, where $u_0 = \phi_{\lambda,0} + q_0 \cg_\l $. Being $u_0 \in H^1(\mathbb{R}^2)$, we deduce $q_0 = 0$ and so $u_0(0) = \phi_{\lambda,0}(0) = 0$. 
%This contradicts the fact that $u_0 > 0$. 
%%%
%%%
%By the same argument as in the proof of Lemma \ref{lemm2}, $(u_0,v_0)$ becomes a minimizer for $c_\b$ and $u_0(0) = 0$. 
Moreover, 
since $(I_\beta^0)'(u_0,v_0) = 0$, 
by the strong maximum principle, it follows that $u_0 = 0$. 
\end{proof}

Now the proof of Theorem \ref{th:ex} follows easily.

\begin{proof}[Proof of Theorem \ref{th:ex}]
If $c_\beta < c_\beta^0$ holds, from the proof of Theorem \ref{theo1}, $c_\beta$ has a minimizer. On the other hand, if $c_\beta = c_\beta^0$ holds, from Lemma \ref{lemm1}, $c_\beta$ has a minimizer. 
\end{proof}

\begin{lemma} \label{lemm2}
For any $\b\ge 0$, if $\omega \le \tilde{\omega}$, then it follows that $c_\beta(\o,\tilde{\omega}) < c_\beta^0(\o,\tilde{\omega})$. 
\end{lemma}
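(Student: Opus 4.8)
The plan is to argue by contradiction: assuming $c_\beta = c_\beta^0$, I will exhibit a minimizer of $c_\beta^0$ whose first component is not identically zero, which contradicts Lemma \ref{lemm1}. The hypothesis $\omega \le \tilde{\omega}$ is used precisely to produce such a minimizer, through the scaling of the scalar ground state level.

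First I would record the inequalities $c_\beta^0 \le \omega\, d^0(1) \le d^0(\tilde{\omega})$. Let $u_1$ be a least-energy nontrivial solution of $-\Delta u + \omega u = |u|^2 u$ in $\mathbb{R}^2$ (it exists, e.g.\ by rescaling a minimizer of $d^0(\tilde{\omega})$); then $\|\nabla u_1\|_2^2 + \omega\|u_1\|_2^2 = \|u_1\|_4^4$, so $(u_1, 0) \in \mathcal{N}_\beta^0$ and $I_\beta^0(u_1, 0) = S_\omega^0(u_1) = \omega\, d^0(1)$ by the scaling (cf.\ Remark \ref{rem6}); hence $c_\beta^0 \le \omega\, d^0(1)$. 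Since $\omega \le \tilde{\omega}$ and $d^0(1) > 0$, we also have $\omega\, d^0(1) \le \tilde{\omega}\, d^0(1) = d^0(\tilde{\omega})$.

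Next I would check that $c_\beta^0$ admits a minimizer with nonvanishing first component. A nonnegative minimizer $(\bar u, \bar v)$ of $c_\beta^0$ exists, as recalled in the introduction. If $\bar u \not\equiv 0$ we are done. Otherwise $\bar v \in \mathcal{M}_{\tilde{\omega}}^0$ and $c_\beta^0 = I_\beta^0(0, \bar v) = S_{\tilde{\omega}}^0(\bar v) \ge d^0(\tilde{\omega})$; together with $c_\beta^0 \le \omega\, d^0(1) \le d^0(\tilde{\omega})$ this forces $c_\beta^0 = \omega\, d^0(1)$, so $(u_1, 0)$ is itself a minimizer of $c_\beta^0$, and it has nonzero first component.

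Finally, under the assumption $c_\beta = c_\beta^0$, Lemma \ref{lemm1} applied to a minimizer $(\bar u, \bar v)$ of $c_\beta^0$ with $\bar u \not\equiv 0$ forces $\bar u = 0$, a contradiction. Hence $c_\beta \neq c_\beta^0$, and since $c_\beta \le c_\beta^0$ always holds, $c_\beta(\omega,\tilde{\omega}) < c_\beta^0(\omega,\tilde{\omega})$. The one substantive step is the middle one --- that $\omega \le \tilde{\omega}$ prevents the first component from vanishing in every minimizer of $c_\beta^0$; the rest is bookkeeping with the Nehari characterizations and the scaling of $d^0$. (Alternatively one may bypass Lemma \ref{lemm1}: perturb a nonnegative minimizer $(\bar u, \bar v)$ of $c_\beta^0$ with $\bar u \not\equiv 0$ to $w_\varepsilon := \bar u + \varepsilon\,\cg_\omega$, project $(w_\varepsilon,\bar v)$ onto $\mathcal{N}_\beta$ via Lemma \ref{lemm16}, and check, taking the admissible $\lambda = \omega$ so that $\langle(-\Delta_\alpha+\omega)w_\varepsilon,w_\varepsilon\rangle = \|\nabla\bar u\|_2^2 + \omega\|\bar u\|_2^2 + (\alpha+\theta_\omega)\varepsilon^2$, that the resulting Nehari value --- which majorizes $c_\beta$ and equals $c_\beta^0$ at $\varepsilon = 0$ --- is strictly decreasing at $\varepsilon = 0$, because its numerator $\|(w_\varepsilon,\bar v)\|_{\mathbb{H}_\alpha}^4$ has no linear term in $\varepsilon$ while its denominator $4\big(\|w_\varepsilon\|_4^4 + \|\bar v\|_4^4 + 2\beta\int_{\mathbb{R}^2}|w_\varepsilon\bar v|^2\big)$ has a strictly positive linear term.)
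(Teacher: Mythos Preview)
Your main argument is correct and follows the same route as the paper: assume $c_\beta = c_\beta^0$ and contradict Lemma~\ref{lemm1} by exhibiting a minimizer of $c_\beta^0$ with nonzero first component when $\omega \le \tilde\omega$. The paper simply asserts that step (``since $\omega \le \tilde{\omega}$, we may assume that $u_0 \neq 0$'') and then applies the strong maximum principle before invoking Lemma~\ref{lemm1}, whereas you justify it explicitly via the scaling $d^0(\omega)=\omega\,d^0(1)\le d^0(\tilde\omega)$; your parenthetical perturbation argument is a genuinely different, more direct alternative not present in the paper.
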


\begin{proof}
It is clear that $c_\beta(\omega,\tilde{\omega}) \le c_\beta^0(\omega,\tilde{\omega})$.  Let $(u_0,v_0) \in \mathcal{N}_\beta^0$ be a minimizer for $c_\beta^0(\omega,\tilde{\omega})$. Since $\omega \le \tilde{\omega}$, we may assume that $u_0 \neq 0$ and $u_0 \ge 0$. So, since $(I_\beta^0)'(u_0,v_0) = 0$, by strong maximum principle, we have $u_0 > 0$. 
\\
Suppose, by contradiction, that $c_\beta(\omega,\tilde{\omega}) = c_\beta^0(\omega,\tilde{\omega})$.
From Lemma \ref{lemm1}, we have $u_0 = 0$. 
This contradicts the fact that $u_0 > 0$. 
\end{proof}

Now we will remark that $c_\beta(\o,\tilde{\omega}) < c_\beta^0(\o,\tilde{\omega})$ could hold even if $\omega > \tilde{\omega}$.

%\begin{lemma} \label{lemm8}
\begin{remark} \label{lemm8}
For any $\b\ge 0$ and $\omega_* \in (\omega_\alpha,\infty)$, there exists $\eps=\eps(\b,\omega_*) > 0$ such that
for any $\o\in (\omega_*, \omega_*+\eps)$ we have
\begin{align}
\label{eq1} c_\beta(\omega,\omega_*) &< c_\beta^0(\omega,\omega_*),
\end{align}
for any $\o'\in (\omega_*-\eps, \omega_*)$ we have
\begin{align}
\label{eq2} c_\beta(\omega_*,\omega') &< c_\beta^0(\omega_*,\omega'),
\end{align}
and for any $\o\in (\omega_*, \omega_*+\eps)$ and $\o'\in (\omega_*-\eps, \omega_*)$ we have
\begin{align}
\label{eq3} c_\beta(\omega,\omega') &< c_\beta^0(\omega,\omega').
\end{align}
%where we write $c_\beta$ and $c_\beta^0$ as $c_\beta(\omega,\tilde{\omega})$ and $c_\beta^0(\omega,\tilde{\omega})$. 
%\end{lemma}
Indeed, 
since $c_\beta^0(\omega,\tilde{\omega})$ and $c_\beta(\omega,\tilde{\omega})$ are continuous with respect to $(\omega,\tilde{\omega}) \in (\omega_\alpha,\infty) \times (0,\infty)$ (cf.  \cite[Lemma 3.7]{Pomponio10}), there exists $\eps > 0$ such that
for any $\o \in (\o_*,\o_* + \eps)$, $\o' \in (\o_* - \eps,\o_*)$, we have
\begin{align*}
|c_\beta(\o,\omega_*) - c_\beta(\omega_*,\omega_*)| < \bar{\delta},\\
|c_\beta^0(\o,\omega_*) - c_\beta^0(\omega_*,\omega_*)| < \bar{\delta},\\
|c_\beta(\omega_*,\o') - c_\beta(\omega_*,\omega_*)| < \bar{\delta},\\
|c_\beta^0(\omega_*,\o') - c_\beta^0(\omega_*,\omega_*)| < \bar{\delta},\\
|c_\beta(\o,\o') - c_\beta(\omega_*,\omega_*)| < \bar{\delta},\\
|c_\beta^0(\o,\o') - c_\beta^0(\omega_*,\omega_*)| < \bar{\delta}.
\end{align*}
Then we obtain \eqref{eq1}--\eqref{eq3}. 
Here $\bar{\delta}$ is defined as follows:
\begin{align*}
\bar{\delta} := \frac{1}{3} (c_\beta^0(\omega_*,\omega_*) - c_\beta(\omega_*,\omega_*)) > 0.
\end{align*}
\end{remark}

%
%\red{Did you delete this Remark \ref{rem1}? If you thought that this Remark isn't needed, please delete this.}
%
%\blue{
%\begin{remark} \label{rem1}
%Lemma \ref{lemm8} indicates that $c_\beta(\o,\tilde{\omega}) < c_\beta^0(\o,\tilde{\omega})$ could hold even if $\omega > \tilde{\omega}$.
%\end{remark}
%}

At last, we have that $c_\beta(\o,\tilde{\omega}) < c_\beta^0(\o,\tilde{\omega})$ for $\b$ large. More precisely, recalling the definition in \eqref{b0},  we have the following.

\begin{lemma} \label{lemm3}
For any $\o > \omega_\alpha$, $\tilde{\omega}>0$, if $\beta > \beta_0(\o,\tilde{\o})$, then it follows that $c_\beta(\o,\tilde{\omega}) < c_\beta^0(\o,\tilde{\omega})$.
\end{lemma}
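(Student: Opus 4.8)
The plan is to exploit the definition of $\beta_0$ in \eqref{b0} together with Lemma \ref{lemm1}, arguing by contradiction. Suppose $\beta > \beta_0(\omega,\tilde\omega)$ but, contrary to the claim, $c_\beta(\omega,\tilde\omega) = c_\beta^0(\omega,\tilde\omega)$. Let $(u_0,v_0) \in \mathcal{N}_\beta^0$ be a non-negative minimizer for $c_\beta^0(\omega,\tilde\omega)$ (which exists, as recalled after the definition of $c_\beta^0$). By Lemma \ref{lemm1}, $(u_0,v_0)$ is then also a minimizer for $c_\beta(\omega,\tilde\omega)$ and $u_0 = 0$. Hence $(0,v_0) \in \mathcal{N}_\beta^0$ is a minimizer for $c_\beta^0(\omega,\tilde\omega)$, which forces $v_0$ to be a minimizer for $d^0(\tilde\omega)$; in particular $c_\beta^0(\omega,\tilde\omega) = d^0(\tilde\omega)$.

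Next I would compare $c_\beta^0(\omega,\tilde\omega)$ with $c_0^0(\omega,\tilde\omega)$. Since the scalar function $v_0$ realizing $d^0(\tilde\omega)$ satisfies $G_0^0(0,v_0) = 0$ and $\int_{\RD}|0\cdot v_0|^2 = 0$, the pair $(0,v_0)$ lies in $\mathcal{N}_{\beta'}^0$ for \emph{every} $\beta' \ge 0$ with the same value $I_{\beta'}^0(0,v_0) = d^0(\tilde\omega)$; thus $c_{\beta'}^0(\omega,\tilde\omega) \le d^0(\tilde\omega)$ for all $\beta' \ge 0$, and in particular $c_\beta^0 \le c_0^0 \le d^0(\tilde\omega) = c_\beta^0$, so $c_\beta^0(\omega,\tilde\omega) = c_0^0(\omega,\tilde\omega)$. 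But then $\beta$ belongs to the set $\{\beta' \ge 0 \mid c_{\beta'}^0 = c_0^0\}$, whose maximum is $\beta_0(\omega,\tilde\omega)$ by \eqref{b0}; hence $\beta \le \beta_0(\omega,\tilde\omega)$, contradicting our assumption $\beta > \beta_0(\omega,\tilde\omega)$. Therefore $c_\beta(\omega,\tilde\omega) < c_\beta^0(\omega,\tilde\omega)$.

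The one point that needs a little care — and which I expect to be the main (though minor) obstacle — is justifying the step "$(0,v_0)$ a minimizer for $c_\beta^0$ implies $v_0$ minimizes $d^0(\tilde\omega)$ and $c_\beta^0 = d^0(\tilde\omega)$"; this is where one uses that $I_\beta^0(0,v) = S_{\tilde\omega}^0(v)$ and $G_\beta^0(0,v) = 0 \iff \|\nabla v\|_2^2 + \tilde\omega\|v\|_2^2 = \|v\|_4^4$, i.e. $v \in \mathcal{M}_{\tilde\omega}^0$, together with Lemma \ref{lemm16-bis} to rule out any competitor with lower energy. One should also note that the hypothesis implicitly needs $\beta > 0$ to be meaningful (if $\beta_0 = 0$ we already have $\beta > 0$), and that the monotonicity $\beta' \mapsto c_{\beta'}^0$ being nonincreasing — immediate since enlarging $\beta'$ only decreases $I_{\beta'}^0$ on any fixed pair while the semi-trivial competitor $(0,v_0)$ stays admissible — makes the set in \eqref{b0} an interval, so that $\beta \le \beta_0$ is exactly the negation of what we assumed. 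All of these are routine once the contradiction scheme above is in place.
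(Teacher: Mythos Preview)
Your proof is correct and follows essentially the same approach as the paper: both argue by contradiction from $c_\beta=c_\beta^0$, invoke the content of Lemma~\ref{lemm1} (that a minimizer for $c_\beta^0$ then becomes a regular minimizer for $c_\beta$), and reach a contradiction with the definition of $\beta_0$. The only organizational difference is that the paper first uses $\beta>\beta_0\Rightarrow c_\beta^0<c_0^0$ to force the minimizer of $c_\beta^0$ to be vector and then contradicts this via the maximum-principle step (i.e.\ the proof of Lemma~\ref{lemm1}), whereas you apply Lemma~\ref{lemm1} as a black box to get $u_0=0$, deduce $c_\beta^0=d^0(\tilde\omega)=c_0^0$, and then contradict $\beta>\beta_0$ directly; the logical content is the same.
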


\begin{proof}
Let $(u_0,v_0) \in \mathcal{N}_\beta^0$ be a minimizer for $c_\beta^0(\omega,\tilde{\omega})$. Suppose that $c_\beta(\omega,\tilde{\omega}) = c_\beta^0(\omega,\tilde{\omega})$.
By Lemma \ref{min-crit}, it follows that $u_0(0) = 0$ and $(I_\beta^0)'(u_0,v_0) = 0$. Since $\beta > \beta_0$, $c_\beta^0 < c_0^0$ holds. Therefore we have $u_0 \neq 0$ and $v_0 \neq 0$. Since we may assume that $u_0 \ge 0$, by strong maximum principle, we have $u_0 > 0$ and this contradicts $u_0(0) = 0$. 
\end{proof}

\begin{corollary} \label{lemm4.5}
For any $\o > \omega_\alpha$, $\tilde{\omega}>0$, if $c_\beta = c_\beta^0$, then $\beta\le\b_0$. 
\end{corollary}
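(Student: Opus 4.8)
The plan is to obtain this statement immediately as the contrapositive of Lemma \ref{lemm3}. Recall that, as noted just after the definition of $c_\beta^0$, one always has the inequality $c_\beta(\o,\tilde\o) \le c_\beta^0(\o,\tilde\o)$, obtained simply by restricting the infimum defining $c_\beta$ to pairs $(u,v)\in\mathbb{H}\subset\mathbb{H}_\alpha$ with $q(u)=0$ (equivalently, by using Lemma \ref{lemm16} together with Lemma \ref{lemm16-bis}). Consequently, for fixed $\o>\omega_\alpha$ and $\tilde\o>0$, the equality $c_\beta=c_\beta^0$ is precisely the negation of the strict inequality $c_\beta<c_\beta^0$.

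Now I would argue as follows. Suppose, for contradiction, that $c_\beta=c_\beta^0$ but $\beta>\beta_0(\o,\tilde\o)$. Then Lemma \ref{lemm3} applies and yields $c_\beta(\o,\tilde\o)<c_\beta^0(\o,\tilde\o)$, contradicting $c_\beta=c_\beta^0$. Hence $\beta\le\beta_0$, which is the claim. Equivalently, and more directly: by Lemma \ref{lemm3}, $\beta>\beta_0 \Rightarrow c_\beta<c_\beta^0$; taking the contrapositive and using $c_\beta\le c_\beta^0$ (so that $\neg(c_\beta<c_\beta^0)$ is the same as $c_\beta=c_\beta^0$), we get $c_\beta=c_\beta^0 \Rightarrow \beta\le\beta_0$.

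There is no real obstacle here, since the corollary is a purely logical consequence of Lemma \ref{lemm3}; the only point worth stating explicitly in the write-up is the universally valid inequality $c_\beta\le c_\beta^0$, which is what licenses replacing ``$c_\beta=c_\beta^0$'' by ``$c_\beta\not<c_\beta^0$'' when passing to the contrapositive.
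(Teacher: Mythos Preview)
Your argument is correct and is exactly the paper's own proof: the corollary is stated (and proved) simply as the contrapositive of Lemma~\ref{lemm3}. Your additional remark that $c_\beta \le c_\beta^0$ always holds, so that the negation of $c_\beta < c_\beta^0$ is precisely $c_\beta = c_\beta^0$, is a useful clarification but not something the paper spells out.
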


\begin{proof}
This lemma follows easily from the contraposition of Lemma \ref{lemm3}.
\end{proof}

\begin{remark}
Observe that the converse of Lemma \ref{lemm3} does not hold, namely if   $c_\beta(\o,\tilde{\omega}) < c_\beta^0(\o,\tilde{\omega})$, then we cannot conclude that $\beta > \beta_0$. Indeed by arguing as in \cite{KurataOsada21CPAA}, we have that $\b_0(\o,\tilde{\omega})>0$, for any $\o > \omega_\alpha$, $\tilde{\omega}>0$, while, at least when $\o\le  \tilde{\omega}$, by Lemma \ref{lemm2}, we know that $c_\beta(\o,\tilde{\omega}) < c_\beta^0(\o,\tilde{\omega})$ for any $\b\ge 0$.
See also, Corollary \ref{coro8}. 
\end{remark}

Now Corollary \ref{cor} is an immediate consequence of Lemma \ref{lemm2}, Lemma \ref{lemm3}
 and Theorem \ref{theo1}.

Next aim is to show that  $c_\b$ is attained by a scalar ground state for $0\le\b<\b^*$. Moreover, we investigate when the ground state becomes regular or singular.

\begin{remark} \label{rem2}
If $c_\beta = c_\beta^0$ and  $(u_0,v_0)\in \mathbb{H}_\a$  is a minimizer for $c_\beta$ with $v_0 = 0$, then it is not a minimizer for $c_\b^0$, because it would be in contradiction with Lemma \ref{lemm1}. In particular $u_0\notin \H$.
\end{remark}

\begin{lemma} \label{lemm5.5}
If $c_\beta = d(\omega)$, then for any minimizer $u_0$ for $d(\omega)$, $(u_0,0)$ is a minimizer for $c_\beta$. 
\end{lemma}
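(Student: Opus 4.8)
The plan is to show that a minimizer $u_0$ for $d(\omega)$ yields a point $(u_0,0)$ on the Nehari manifold $\mathcal{N}_\beta$ realizing the value $c_\beta$. First I would note that if $u_0 \in \mathcal{M}_\omega$, then $\langle (-\Delta_\alpha+\omega)u_0,u_0\rangle = \|u_0\|_4^4$, and since the $v$-component is zero, the cross term $\beta\int_{\RD}|u_0 \cdot 0|^2$ vanishes; hence $G_\beta(u_0,0) = \langle(-\Delta_\alpha+\omega)u_0,u_0\rangle - \|u_0\|_4^4 = 0$, so $(u_0,0) \in \mathcal{N}_\beta$. Likewise, because $I_\beta$ restricted to pairs of the form $(u,0)$ coincides with $S_\omega(u)$ (the cross term and the $v$-part of the energy both drop out), we get $I_\beta(u_0,0) = S_\omega(u_0) = d(\omega)$. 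Therefore $c_\beta \le I_\beta(u_0,0) = d(\omega)$.

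Next I would use the hypothesis $c_\beta = d(\omega)$: it gives $I_\beta(u_0,0) = d(\omega) = c_\beta = \inf_{\mathcal{N}_\beta} I_\beta$, and since $(u_0,0) \in \mathcal{N}_\beta$, the pair $(u_0,0)$ attains the infimum defining $c_\beta$, i.e. it is a minimizer for $c_\beta$. This is essentially immediate once the two observations above are in place.

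The only point requiring a little care — and the one I expect to be the main (minor) obstacle — is justifying that one may take $u_0 \ge 0$ or otherwise ensuring the identifications $G_\beta(u_0,0) = 0$ and $I_\beta(u_0,0) = S_\omega(u_0)$ hold verbatim; but these follow directly from the definitions of $G_\beta$, $I_\beta$, $\mathcal{M}_\omega$ and $S_\omega$ upon setting the second component to zero, with no sign or regularity assumption needed. In particular one does not need to know whether $u_0$ is regular or singular, nor whether $(u_0,0)$ is a ground state of \eqref{eq4}; the statement is purely about the variational level $c_\beta$, so no elliptic-regularity or maximum-principle input is required here. Thus the proof is short: exhibit $(u_0,0) \in \mathcal{N}_\beta$, compute $I_\beta(u_0,0) = d(\omega) = c_\beta$, and conclude minimality.
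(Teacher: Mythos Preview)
Your proposal is correct and follows essentially the same approach as the paper: show $(u_0,0)\in\mathcal{N}_\beta$ from $u_0\in\mathcal{M}_\omega$, compute $I_\beta(u_0,0)=S_\omega(u_0)=d(\omega)=c_\beta$, and conclude. The paper's proof is the same two-line argument, without the extra commentary on sign or regularity (which, as you correctly note, is unnecessary here).
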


\begin{proof}
Let $u_0$ be a minimizer for $d(\o)$. Then $(u_0,0) \in \mathcal{N}_\b$. Thus we have
\begin{align*}
d(\o) = S_\o(u_0) = I_\b(u_0,0) \ge c_\b.
\end{align*}
Since $c_\b = d(\o)$, $(u_0,0)$ becomes a minimizer for $c_\b$. 
\end{proof}

%\torange{
%\red{
%To pass from $d^0(\tilde{\omega})$ to $d^0(1)$, we consider 
%\[
%v=\frac{1}{\sqrt{\tilde{\omega}}}u \left(\frac{1}{\sqrt{\tilde{\omega}}} \cdot\right)
%\]
%
%Uniqueness by monotonicity
%
%Existence by Remark
%
%$\tilde{\omega} \in (0,\omega)$: if by contradiction $\tilde{\omega}\ge \o$, then $d^0(\tilde{\omega})\ge d^0(\omega)>d(\omega)$ 
%}
%%}

\begin{lemma} \label{lemm9}
The map $\beta \mapsto c_\beta$ is continuous on $[0,\infty)$. 
\end{lemma}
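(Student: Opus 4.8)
The plan is to prove continuity in two standard halves: upper semicontinuity and lower semicontinuity, both obtained by transporting a minimizer (or near-minimizer) at one parameter value onto the Nehari manifold at a nearby parameter value via the fibering map of Lemma \ref{lemm16}. Fix $\beta_0 \ge 0$ and a sequence $\beta_n \to \beta_0$ with $\beta_n \ge 0$.

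For \emph{upper semicontinuity} at $\beta_0$, I would take a minimizer $(u,v)$ for $c_{\beta_0}$ (which exists by Theorem \ref{th:ex}, or one could equally use an $\varepsilon$-minimizing sequence to avoid invoking attainment). Since $(u,v) \in \mathbb{H}_\alpha \setminus \{(0,0)\}$, Lemma \ref{lemm16} gives a unique $t_n = t_n(\beta_n) > 0$ with $(t_n u, t_n v) \in \mathcal{N}_{\beta_n}$; explicitly $t_n^2 = A/(B + 2\beta_n C)$ in the notation of Lemma \ref{lemm16}, with $A = \|(u,v)\|_{\mathbb{H}_\alpha}^2$, $B = \|u\|_4^4 + \|v\|_4^4$, $C = \int_{\RD}|uv|^2$. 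As $\beta_n \to \beta_0$ we get $t_n \to t_0$ where $(t_0 u, t_0 v) \in \mathcal{N}_{\beta_0}$, hence $t_0 = 1$ (uniqueness in Lemma \ref{lemm16}, since $(u,v) \in \mathcal{N}_{\beta_0}$). Then, using that on the Nehari manifold $I_\beta = J$ depends on $\beta$ only through $t_n$,
\begin{align*}
c_{\beta_n} \le I_{\beta_n}(t_n u, t_n v) = \tfrac{t_n^2}{4}\|(u,v)\|_{\mathbb{H}_\alpha}^2 \to \tfrac14 \|(u,v)\|_{\mathbb{H}_\alpha}^2 = c_{\beta_0},
\end{align*}
so $\limsup_n c_{\beta_n} \le c_{\beta_0}$.

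For \emph{lower semicontinuity}, for each $n$ pick a minimizer $(u_n,v_n)$ for $c_{\beta_n}$ (Theorem \ref{th:ex}). First I would show $\{(u_n,v_n)\}_n$ is bounded in $\mathbb{H}_\alpha$: from $I_{\beta_n}(u_n,v_n) = J(u_n,v_n) = \tfrac14\|(u_n,v_n)\|_{\mathbb{H}_\alpha}^2 = c_{\beta_n}$ together with the $\limsup$ bound just proved, $\|(u_n,v_n)\|_{\mathbb{H}_\alpha}^2 = 4 c_{\beta_n}$ stays bounded. Next, by Lemma \ref{lemm16} transport $(u_n,v_n)$ onto $\mathcal{N}_{\beta_0}$: there is $s_n > 0$ with $(s_n u_n, s_n v_n) \in \mathcal{N}_{\beta_0}$, $s_n^2 = A_n/(B_n + 2\beta_0 C_n)$ where $A_n, B_n, C_n$ are the corresponding quantities for $(u_n,v_n)$. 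Since $(u_n,v_n) \in \mathcal{N}_{\beta_n}$ gives $A_n = B_n + 2\beta_n C_n$, we get $s_n^2 = (B_n + 2\beta_n C_n)/(B_n + 2\beta_0 C_n)$; using $A_n \ge C > 0$ (Lemma \ref{lemm14}), the boundedness of $A_n, B_n, C_n$, and $\beta_n \to \beta_0$, one checks $s_n \to 1$. Therefore
\begin{align*}
c_{\beta_0} \le I_{\beta_0}(s_n u_n, s_n v_n) = \tfrac{s_n^2}{4}\|(u_n,v_n)\|_{\mathbb{H}_\alpha}^2 = s_n^2 c_{\beta_n},
\end{align*}
and letting $n \to \infty$ yields $c_{\beta_0} \le \liminf_n c_{\beta_n}$. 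Combining the two inequalities gives $c_{\beta_n} \to c_{\beta_0}$.

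The only mildly delicate point — the "main obstacle" — is justifying $s_n \to 1$ (equivalently controlling the denominators $B_n + 2\beta_0 C_n$ away from zero); this is exactly where Lemma \ref{lemm14}'s uniform lower bound $A_n \ge C > 0$ is used, since $B_n + 2\beta_0 C_n \ge \min(1, \beta_0/\beta_n)(B_n + 2\beta_n C_n) = \min(1,\beta_0/\beta_n) A_n$ when $\beta_n, \beta_0 > 0$, with the boundary case $\beta_0 = 0$ handled directly from $B_n \ge$ a positive constant (which again follows from $A_n \ge C$ and boundedness, as in Claim 1 of the proof of Theorem \ref{theo1}). Everything else is the routine fibering-map computation already packaged in Lemmas \ref{lemm14} and \ref{lemm16}.
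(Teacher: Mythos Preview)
Your proof is correct and follows essentially the same approach as the paper: transport a minimizer at one parameter onto the Nehari manifold at a nearby parameter via the fibering map of Lemma~\ref{lemm16}, and do this in both directions. The only cosmetic difference is that you use the explicit fibering formula $t^2 = A/(B+2\beta C)$ to show $t_n, s_n \to 1$ and then read off the limit via $I_\beta|_{\mathcal N_\beta} = J$, whereas the paper is content to show $\{t_n\}, \{s_n\}$ are bounded and absorbs the difference $I_{\beta_n} - I_{\hat\beta}$ as an $o(1)$ term on bounded sets; the key nondegeneracy step (your ``main obstacle'') --- bounding $B_n$ or $B_n + 2\beta_0 C_n$ away from zero --- is exactly the paper's Claims~1 and~2.
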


\begin{proof}
Let $\hat{\beta} \in [0,\infty)$ and let $\beta_n \in (0,\infty)$ with $\beta_n \to \hat{\beta}$ as $n \to \infty$. We show $c_{\beta_n} \to c_{\hat{\beta}}$. Let $(u_0,v_0)$ be a minimizer for $c_{\hat{\beta}}$. Let $\{t_n\}_n$ be a positive numbers sequence such that $(t_n u_0, t_n v_0) \in \mathcal{N}_{\beta_n}$. Then
\begin{align}\label{pippo}
\langle (- \Delta_\alpha + \omega) u_0,u_0 \rangle + \|\nabla v_0\|_2^2 + \tilde{\omega} \|v_0\|_2^2
 = t_n^2 \left(\|u_0\|_4^4 + \|v_0\|_4^4 + \frac{\beta_n}{2} \int_{\mathbb{R}^2} |u_0 v_0|^2\right).
\end{align}
Thus $\{t_n\}_n$ is bounded. Therefore, 
\begin{align*}
c_{\hat{\beta}} = I_{\hat{\beta}}(u_0,v_0) \ge I_{\hat{\beta}}(t_n u_0,t_n v_0)
 = I_{\beta_n}(t_n u_0,t_n v_0) + o(1) \ge c_{\beta_n} + o(1).
\end{align*}
On the other hand, let $(u_n,v_n)$ be a minimizer for $c_{\beta_n}$. Let $\{s_n\}_n$ be a positive numbers sequence such that $(s_n u_n, s_n v_n) \in \mathcal{N}_{\hat{\beta}}$. Then
\begin{align}
\label{eq5}
\langle (- \Delta_\alpha + \omega) u_n,u_n \rangle + \|\nabla v_n\|_2^2 + \tilde{\omega} \|v_n\|_2^2 = s_n^2 \left(\|u_n\|_4^4 + \|v_n\|_4^4 + \frac{\hat{\beta}}{2} \int_{\mathbb{R}^2} |u_n v_n|^2\right).
\end{align}
Since $(u_n,v_n) \in \mathcal{N}_{\beta_n}$, we have
\begin{align}
\label{eq6}
\langle (- \Delta_\alpha + \omega) u_n,u_n \rangle + \|\nabla v_n\|_2^2 + \tilde{\omega} \|v_n\|_2^2 = \|u_n\|_4^4 + \|v_n\|_4^4 + \frac{\beta_n}{2} \int_{\mathbb{R}^2} |u_n v_n|^2.
\end{align}
\ \\
\textbf{Claim 1.}\quad Up to a subsequence, there exists $C > 0$ such that
\begin{align*}
\mathcal{A}_n =: \langle (- \Delta_\alpha + \omega) u_n, u_n \rangle + \|\nabla v_n\|_2^2 + \tilde{\omega} \|v_n\|_2^2 \ge C\quad (\forall n \in \mathbb{N}).
\end{align*}
From \eqref{eq6}, we have $\mathcal{A}_n \le C \mathcal{A}_n^2$. Therefore $\mathcal{A}_n \ge C$ for all $n \in \mathbb{N}$. \\
\ \\
\textbf{Claim 2.}\quad Up to a subsequence, there exists $C > 0$ such that $\|u_n\|_4^4 + \|v_n\|_4^4 \ge C$ for all $n \in \mathbb{N}$. \\
If $\|u_n\|_4^4 + \|v_n\|_4^4 \to 0$, then from \eqref{eq6}, we have
\begin{align*}
\langle (- \Delta_\alpha + \omega) u_n,u_n \rangle + \|\nabla v_n\|_2^2 + \tilde{\omega} \|v_n\|_2^2 \to 0.
\end{align*}
This contradicts Claim 1. 
\\
By \eqref{pippo},
\begin{align*}
c_{\hat{\beta}} + o(1) \ge c_{\beta_n} = I_{\beta_n}(u_n,v_n) = \frac{1}{4} \mathcal{A}_n.
\end{align*}
Therefore $\{\mathcal{A}_n\}_n$ is bounded. 
From \eqref{eq5} and Claim 2, $\{s_n\}_n$ is bounded.
Hence we obtain
\begin{align*}
c_{\beta_n} = I_{\beta_n}(u_n,v_n) \ge I_{\beta_n}(s_n u_n,s_n v_n)
 = I_{\hat{\beta}}(s_n u_n,s_n v_n) + o(1) \ge c_{\hat{\beta}} + o(1).
\end{align*}
Therefore we obtain $c_{\beta_n} = c_{\hat{\beta}} + o(1)$ as $n \to \infty$. 
\end{proof}

\begin{lemma} \label{lemm10}
If $0 \le \beta_1 < \beta_2$, then $c_{\beta_1} \ge c_{\beta_2}$. 
\end{lemma}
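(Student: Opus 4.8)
The plan is to exploit the monotonicity of the coupling term in $\beta$ together with the fibering (scaling) lemma. First I would take a minimizer $(u_0,v_0)\in\mathcal{N}_{\beta_1}$ for $c_{\beta_1}$, whose existence is guaranteed by Theorem \ref{th:ex}; since $(u_0,v_0)$ lies on the Nehari manifold and the nonlinearity is quartic, one has
\[
I_{\beta_1}(u_0,v_0)=J(u_0,v_0)=\frac{1}{4}\|(u_0,v_0)\|_{\mathbb{H}_\alpha}^2=c_{\beta_1}.
\]
(One could equally well run the whole argument along a minimizing sequence for $c_{\beta_1}$, which would avoid even invoking Theorem \ref{th:ex}.)

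Next, since $\int_{\mathbb{R}^2}|u_0v_0|^2\ge0$ and $\beta_2>\beta_1$, we get
$G_{\beta_2}(u_0,v_0)=G_{\beta_1}(u_0,v_0)-2(\beta_2-\beta_1)\int_{\mathbb{R}^2}|u_0v_0|^2\le 0$.
By Lemma \ref{lemm16} there is a unique $t_0>0$ with $(t_0u_0,t_0v_0)\in\mathcal{N}_{\beta_2}$, and from the explicit expression $t_0^2=\|(u_0,v_0)\|_{\mathbb{H}_\alpha}^2/\bigl(\|u_0\|_4^4+\|v_0\|_4^4+2\beta_2\int_{\mathbb{R}^2}|u_0v_0|^2\bigr)$ together with $G_{\beta_2}(u_0,v_0)\le 0$ one deduces $t_0\le 1$.

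Finally, using once more that the functional coincides with $J$ on the Nehari manifold and that $J$ is $2$-homogeneous along rays, I would conclude
\[
c_{\beta_2}\le I_{\beta_2}(t_0u_0,t_0v_0)=J(t_0u_0,t_0v_0)=t_0^2\,J(u_0,v_0)\le J(u_0,v_0)=c_{\beta_1}.
\]
I do not expect any genuine obstacle; the only point requiring a line of care is that $t_0\le1$ need not be strict, namely when $\int_{\mathbb{R}^2}|u_0v_0|^2=0$. In that borderline case, however, $(u_0,v_0)$ itself already belongs to $\mathcal{N}_{\beta_2}$ with $I_{\beta_2}(u_0,v_0)=I_{\beta_1}(u_0,v_0)$, so the conclusion still follows; alternatively, when the coupling integral is strictly positive one has $G_{\beta_2}(u_0,v_0)<0$ and may instead invoke Lemma \ref{lemm17} directly to get $c_{\beta_2}<J(u_0,v_0)=c_{\beta_1}$.
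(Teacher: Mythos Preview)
Your proof is correct and follows essentially the same approach as the paper: take a minimizer $(u_0,v_0)$ for $c_{\beta_1}$, project it onto $\mathcal{N}_{\beta_2}$ via the fibering map, and compare. The paper's write-up is a touch more streamlined in that it avoids computing $t_0\le 1$ explicitly, using instead the chain $c_{\beta_1}=I_{\beta_1}(u_0,v_0)\ge I_{\beta_1}(t_0u_0,t_0v_0)\ge I_{\beta_2}(t_0u_0,t_0v_0)\ge c_{\beta_2}$, where the first inequality is Lemma~\ref{lemm16} and the second is the pointwise monotonicity $I_{\beta_1}\ge I_{\beta_2}$; but this is the same argument you give, just rearranged.
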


\begin{proof}
Let $(u_0,v_0)$ be a minimizer for $c_{\beta_1}$. Let $t_0 > 0$ be a number such that $(t_0 u_0, t_0 v_0) \in \mathcal{N}_{\beta_2}$. Then
\begin{align*}
c_{\beta_1} = I_{\beta_1}(u_0,v_0) \ge I_{\beta_1}(t_0 u_0,t_0 v_0) 
\ge I_{\beta_2}(t_0 u_0,t_0 v_0) \ge c_{\beta_2}. 
\end{align*}
\end{proof}

Recalling the definition of $\tilde{c}_\infty$ in \eqref{c_inf} and arguing as in the proof of Theorem \ref{theo1}, we have the following.

\begin{lemma}
There exists a minimizer for $\tilde{c}_\infty$.
\end{lemma}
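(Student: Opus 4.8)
The plan is to mimic the proof of Theorem \ref{theo1}, working with the limit functional $\tilde{I}_\infty$ on its Nehari manifold $\mathcal{\tilde{N}}_\infty$. First I would observe that all the structural lemmas used in the proof of Theorem \ref{theo1} have exact analogues here: the functional $\tilde{I}_\infty$ restricted to rays has a unique positive maximum point lying on $\mathcal{\tilde{N}}_\infty$ (the analogue of Lemma \ref{lemm16}, since $\tilde I_\infty(tu,tv)=\tfrac12\|(u,v)\|_{\mathbb{H}_\alpha}^2 t^2-\tfrac12 t^4\int|uv|^2$ has the same $at^2-bt^4$ shape whenever $\int|uv|^2>0$); on $\mathcal{\tilde{N}}_\infty$ one has $\tilde I_\infty(u,v)=\tfrac14\|(u,v)\|_{\mathbb{H}_\alpha}^2=:J(u,v)$; and $\mathcal{\tilde{N}}_\infty$ is bounded away from $0$ in $\mathbb{H}_\alpha$-norm (the analogue of Lemma \ref{lemm14}, using $\int|uv|^2\le\|u\|_4^2\|v\|_4^2\le C\|u\|_{H^1_{\alpha,\omega}}^2\|v\|_{H^1}^2$ and Lemma \ref{lemm20}). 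Also the analogue of Lemma \ref{lemm17} holds: if $\tilde G_\infty(u,v)<0$ then $J(u,v)>\tilde c_\infty$.

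Next I would take a minimizing sequence $\{(u_n,v_n)\}\subset\mathcal{\tilde{N}}_\infty$ with $u_n=\phi_{\lambda,n}+q_n\cg_\l$. From $\tilde I_\infty(u_n,v_n)=J(u_n,v_n)\to\tilde c_\infty$ and the coercivity estimate $J(u_n,v_n)\ge C(\|\phi_{\lambda,n}\|_{H^1}^2+|q_n|^2+\|v_n\|_{H^1}^2)$, the sequences $\{\phi_{\lambda,n}\}$, $\{v_n\}$ are bounded in $H^1(\R^2)$ and $\{q_n\}$ is bounded in $\C$, so up to a subsequence $\phi_{\lambda,n}\weakto\phi_{\lambda,0}$, $v_n\weakto v_0$ weakly in $H^1(\R^2)$ and $q_n\to q_0$ in $\C$. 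Set $u_0=\phi_{\lambda,0}+q_0\cg_\l$. By weak lower semicontinuity of the norm (exactly as in Claim 3 of Theorem \ref{theo1}) we get $J(u_0,v_0)\le\tilde c_\infty$. Then, as in Claim 4, I rule out $\tilde G_\infty(u_0,v_0)\ne0$: if $\tilde G_\infty(u_0,v_0)<0$, Lemma-\ref{lemm17}-analogue contradicts $J(u_0,v_0)\le\tilde c_\infty$; if $\tilde G_\infty(u_0,v_0)>0$, the Brezis–Lieb type splitting $0=\tilde G_\infty(u_n,v_n)=\tilde G_\infty(u_0,v_0)+\tilde G_\infty(u_n-u_0,v_n-v_0)+o(1)$ forces $\tilde G_\infty(u_n-u_0,v_n-v_0)<0$ eventually, hence $J(u_n-u_0,v_n-v_0)>\tilde c_\infty$, while $\tilde c_\infty+o(1)=J(u_n,v_n)=J(u_0,v_0)+J(u_n-u_0,v_n-v_0)+o(1)$ gives $J(u_0,v_0)\le0$, so $(u_0,v_0)=(0,0)$, contradicting $(u_0,v_0)\in\mathbb{H}_\alpha$ after we show it is nontrivial. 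Thus $(u_0,v_0)\in\mathcal{\tilde{N}}_\infty$ and $J(u_0,v_0)=\tilde c_\infty$, i.e. $(u_0,v_0)$ is a minimizer; strong convergence $\phi_{\lambda,n}\to\phi_{\lambda,0}$, $v_n\to v_0$ in $H^1(\R^2)$ follows since the norms converge.

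The main obstacle — and the one real difference from Theorem \ref{theo1} — is showing the weak limit is nontrivial, i.e. that the minimizing sequence does not vanish in $L^4$. For $\tilde I_\infty$ the only nonlinear term is $\int|u_nv_n|^2$, which is controlled by $\|u_n\|_4^2\|v_n\|_4^2$; the constraint $\tilde G_\infty(u_n,v_n)=0$ together with Lemma \ref{lemm14}-type lower bound gives $\int|u_nv_n|^2\ge C>0$, hence $\|u_n\|_4^4+\|v_n\|_4^4\ge C>0$ as well. To pass this to the limit I would use concentration-compactness / a nonvanishing argument: by translation invariance in $\R^2$ there is a subtlety because $-\Delta_\alpha$ is not translation invariant (the point interaction sits at the origin), so I cannot simply translate. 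Instead I follow the structure already used in \cite{ABCT2} and in the proof of Theorem \ref{theo1}: the decomposition $u_n=\phi_{\lambda,n}+q_n\cg_\l$ with $q_n\to q_0$ means the singular part converges strongly in every $L^\tau$, $\tau\in[2,\infty)$, so only $\phi_{\lambda,n}$ and $v_n$ can lose mass, and one argues that if $q_0=0$ were forced one could rescale to the point-interaction-free problem; but here, unlike Theorem \ref{theo1}, we do not assume $\tilde c_\infty<\tilde c_\infty^0$, so the cleanest route is simply to check directly that the constraint prevents both $\phi_{\lambda,n}\to0$ and $v_n\to0$ strongly — if both went to zero in $L^4$, then $\int|u_nv_n|^2\to 0$ (since $\|u_n v_n\|_1\le\|u_n\|_2\|v_n\|_2$ and the $L^4$-interpolation with the convergent singular part), contradicting $\int|u_nv_n|^2\ge C$. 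Combining this with the weak convergences and Rellich on balls yields $(\phi_{\lambda,0},v_0)\ne(0,0)$, and in fact $q_0\ne 0$: if $q_0=0$ then $u_0\in H^1$ and the same rescaling-to-$\tilde c_\infty^0$ argument as in Claim 2 of Theorem \ref{theo1}, or the strong-maximum-principle argument of Lemma \ref{lemm1}, forces a contradiction — this gives the asserted $r_0\in\C\setminus\{0\}$ needed later in Theorem \ref{theo2}. Finally $(u_0,v_0)$ is a critical point of $\tilde I_\infty$ by the Lagrange-multiplier argument of Lemma \ref{min-crit} (the analogue of Lemma \ref{lemm15}, $\tilde G_\infty'(u,v)[(u,v)]=-2\|(u,v)\|_{\mathbb{H}_\alpha}^2<0$ on $\mathcal{\tilde{N}}_\infty$), completing the proof.
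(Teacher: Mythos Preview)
Your overall plan is correct and matches the paper, which literally says ``argue as in the proof of Theorem~\ref{theo1}''; the structural analogues (Lemmas~\ref{lemm14}--\ref{lemm17}) you list all hold for $\tilde I_\infty$, and once $q_0\ne0$ is established Claims~3--5 go through verbatim. The one genuine gap is your ``cleanest route'': the claim that a lower bound on $\|\phi_{\lambda,n}\|_4+\|v_n\|_4$ together with weak $H^1$-convergence and ``Rellich on balls'' yields $(\phi_{\lambda,0},v_0)\ne(0,0)$ is false --- on $\R^2$ mass can escape to infinity (think of translating bumps), and you cannot recenter because the point interaction breaks translation invariance. This shortcut does not work and should be deleted.

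What does work is what you write immediately afterward, and it is exactly the paper's route: the strict inequality $\tilde c_\infty<\tilde c_\infty^0$ is not an extra assumption but holds \emph{unconditionally}, via the Lemma~\ref{lemm1} mechanism. Indeed, any $(u,v)\in\mathcal{\tilde N}_\infty^0$ satisfies $\int_{\R^2}|uv|^2>0$, so both components of every minimizer for $\tilde c_\infty^0$ are nonzero; if $\tilde c_\infty=\tilde c_\infty^0$ such a minimizer would also minimize $\tilde c_\infty$ and hence be a critical point of $\tilde I_\infty$, whence the boundary condition $\phi_\lambda(0)=(\alpha+\theta_\lambda)q$ with $q=0$ plus the strong maximum principle force $u_0\equiv0$, a contradiction. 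The paper records precisely this later in the proof of Theorem~\ref{theo2} (``by the same argument as in the proof of Lemma~\ref{lemm2} and Lemma~\ref{lemm3}, we can prove $\tilde c_\infty<\tilde c_\infty^0$''). With $\tilde c_\infty<\tilde c_\infty^0$ in hand, Claim~2 of Theorem~\ref{theo1} transfers verbatim and gives $q_0\ne0$. So you were not missing a hypothesis --- you only needed to state and prove $\tilde c_\infty<\tilde c_\infty^0$ up front rather than detour through the failed compactness argument.
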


The following lemma is needed to define $\b^*$ and 
useful for the asymptotic behaviour as $\b \to \infty$ in Section \ref{b^*:to:inf}.

\begin{lemma} \label{lemm11}
It follows that $c_\beta < \tilde{c}_\infty / \beta$ for all $\beta > 0$.
\end{lemma}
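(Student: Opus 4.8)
The plan is to exploit the fact that $\tilde{c}_\infty$ is attained, so let $(w,z)\in\mathcal{\tilde N}_\infty$ be a minimizer of $\tilde I_\infty$, i.e. $\langle(-\Delta_\alpha+\omega)w,w\rangle+\|\nabla z\|_2^2+\tilde\omega\|z\|_2^2=2\int_{\RD}|wz|^2$ and $\tilde I_\infty(w,z)=\tilde c_\infty=\tfrac14\big(\langle(-\Delta_\alpha+\omega)w,w\rangle+\|\nabla z\|_2^2+\tilde\omega\|z\|_2^2\big)$. Fix $\beta>0$. I would use the pair $(w,z)$ itself (not a rescaling of its components) as a test pair for $c_\beta$: by Lemma \ref{lemm16} there is a unique $t_0>0$ with $(t_0 w,t_0 z)\in\mathcal{N}_\beta$, so that $c_\beta\le I_\beta(t_0 w,t_0 z)=\max_{t>0}I_\beta(tw,tz)$. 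In the notation of the proof of Lemma \ref{lemm16}, writing $A=\|(w,z)\|_{\mathbb{H}_\alpha}^2$, $B=\|w\|_4^4+\|z\|_4^4>0$, $C=\int_{\RD}|wz|^2$, one has $t_0^2=A/(B+2\beta C)$ and $\max_{t>0}I_\beta(tw,tz)=\tfrac14\,t_0^2 A=\dfrac{A^2}{4(B+2\beta C)}$.

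The key quantitative point is that, since $(w,z)\in\mathcal{\tilde N}_\infty$, we have $A=2C$, hence $2\beta C=\beta A$, and therefore
\begin{align*}
c_\beta\le \frac{A^2}{4(B+2\beta C)}=\frac{A^2}{4(B+\beta A)}<\frac{A^2}{4\beta A}=\frac{A}{4\beta}=\frac{\tilde c_\infty}{\beta},
\end{align*}
where the strict inequality uses $B>0$ and the last identity is $\tilde c_\infty=\tfrac14 A$. Note this also forces $C>0$ (equivalently $z\neq0$ and $w\neq0$), which is automatic because $A=2C$ and $A\ge C_0>0$ on $\mathcal{\tilde N}_\infty$ by the analogue of Lemma \ref{lemm14}; in any case if $C=0$ then $A=0$, contradicting $(w,z)\neq(0,0)$.

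The only mild subtlety — and the one place to be a little careful — is checking that $(w,z)$ is an admissible competitor for $c_\beta$, i.e. that $(w,z)\in\mathbb{H}_\alpha\setminus\{(0,0)\}$ so that Lemma \ref{lemm16} applies; this is immediate since the minimizer for $\tilde c_\infty$ lives in $\mathbb{H}_\alpha$ by construction and is nontrivial. There is no compactness or regularity obstacle here: the whole argument is the elementary comparison of two one-variable polynomials coming from the two Nehari constraints, the difference between them being exactly the term $B=\|w\|_4^4+\|z\|_4^4$, which is strictly positive. Hence $c_\beta<\tilde c_\infty/\beta$ for every $\beta>0$.
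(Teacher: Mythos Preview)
Your proof is correct and follows essentially the same approach as the paper's: take a minimizer $(w,z)$ for $\tilde c_\infty$, project it onto $\mathcal N_\beta$ via Lemma~\ref{lemm16}, and obtain the strict inequality from the positive $L^4$ term $B=\|w\|_4^4+\|z\|_4^4$ that is absent in $\tilde I_\infty$. The only cosmetic difference is that the paper first rescales the minimizer by $1/\sqrt{\beta}$ (anticipating the asymptotic analysis in Section~\ref{b^*:to:inf}) and argues via the inequality $\tilde I_\infty(u_\infty,v_\infty)\ge \tilde I_\infty(t_\beta u_\infty,t_\beta v_\infty)>\beta I_\beta(t_\beta u_\beta,t_\beta v_\beta)$, whereas you skip the rescaling and compute $\max_{t>0}I_\beta(tw,tz)=A^2/(4(B+2\beta C))$ explicitly; the test element on $\mathcal N_\beta$ is the same in both arguments.
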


\begin{proof}
Let $(u_\infty,v_\infty)$ be a minimizer for $\tilde{c}_\infty$. Set $u_\beta := u_\infty / \sqrt{\beta}$ and $v_\beta := v_\infty / \sqrt{\beta}$. Let $t_\beta > 0$ be a number such that $(t_\beta u_\beta, t_\beta v_\beta) \in \mathcal{N}_\beta$. Then
\begin{align*}
\tilde{c}_\infty&=\tilde{I}_\infty(u_\infty, v_\infty) \ge \tilde{I}_\infty(t_\b u_\infty,t_\b  v_\infty)
\\
&= \beta \left[ \frac{t_\b^2}{2} \left(\langle (- \Delta_\alpha + \omega)  u_\b, u_\b \rangle + \|\nabla  v_\beta\|_2^2 + \tilde{\omega} \| v_\beta\|_2^2 \right)- \frac{\beta t_\b^{4}}{2} \int_{\mathbb{R}^2} |u_\beta v_\beta|^2
\right]\\
&> \beta I_\beta(t_\beta u_\beta, t_\beta v_\beta) \ge \beta c_\beta, \qquad \text{for all}\ \beta > 0.
\end{align*}
\end{proof}

In the spirit of \cite{KurataOsada21CPAA}, by 
Lemma \ref{lemm9}, Lemma \ref{lemm10} and Lemma \ref{lemm11}, 
we have the following lemma. 
\begin{lemma}\label{le:b*}
There exists the threshold $\beta^* \ge 0$:
\begin{align*}
\beta^* := \max \{\beta \ge 0 \mid c_\beta = c_0\}.
\end{align*}
\end{lemma}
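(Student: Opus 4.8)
The plan is to verify that the set
\[
A := \{\beta \ge 0 \mid c_\beta = c_0\}
\]
is a nonempty, closed, bounded subinterval of $[0,\infty)$ containing $0$, so that its supremum $\beta^*$ actually belongs to $A$. First, $0 \in A$ trivially, hence $A \neq \emptyset$. Next I would record that $c_0 > 0$: for any $(u,v) \in \mathcal{N}_0$ one has $I_0(u,v) = J(u,v) = \frac14 \|(u,v)\|_{\mathbb{H}_\alpha}^2$, and by Lemma \ref{lemm14} there is $C>0$ with $\|(u,v)\|_{\mathbb{H}_\alpha}^2 \ge C$ on $\mathcal{N}_0$, so $c_0 \ge C/4 > 0$.

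To see that $A$ is bounded above, I would invoke Lemma \ref{lemm11}: $c_\beta < \tilde{c}_\infty / \beta$ for every $\beta > 0$, where $\tilde{c}_\infty < \infty$ since it is attained. Thus for any $\beta > \tilde{c}_\infty / c_0$ we get $c_\beta < c_0$, so $\beta \notin A$; in particular $A \subseteq [0, \tilde{c}_\infty / c_0]$ and $\beta^* := \sup A < \infty$. Moreover $A$ is downward closed: if $\beta \in A$ and $0 \le \beta' \le \beta$, then Lemma \ref{lemm10} (applied to the pair $0,\beta'$ and to the pair $\beta',\beta$) gives $c_0 \ge c_{\beta'} \ge c_\beta = c_0$, forcing $c_{\beta'} = c_0$, i.e. $\beta' \in A$. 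Hence $A$ is an interval, equal to $[0,\beta^*)$ or $[0,\beta^*]$.

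It remains to show $\beta^* \in A$. If $\beta^* = 0$ this is immediate; otherwise choose $\beta_n \uparrow \beta^*$ with $\beta_n \in A$, and use the continuity of $\beta \mapsto c_\beta$ on $[0,\infty)$ (Lemma \ref{lemm9}) to conclude $c_{\beta^*} = \lim_{n} c_{\beta_n} = c_0$, so $\beta^* \in A$ and therefore $\beta^* = \max A$. The argument is essentially a packaging of the three preceding lemmas (continuity, monotonicity, and the bound $c_\beta < \tilde{c}_\infty/\beta$); the only auxiliary point is the strict positivity $c_0 > 0$, where the coercivity estimate of Lemma \ref{lemm14} enters, and this is the single mild obstacle.
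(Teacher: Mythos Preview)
Your proof is correct and follows precisely the route the paper indicates: the paper merely says the lemma follows ``in the spirit of \cite{KurataOsada21CPAA}, by Lemma \ref{lemm9}, Lemma \ref{lemm10} and Lemma \ref{lemm11}'', and you have supplied exactly those details (continuity to close the set, monotonicity to make it an interval, the bound $c_\beta<\tilde c_\infty/\beta$ together with $c_0>0$ to make it bounded). The only extra ingredient you invoke is Lemma \ref{lemm14} to justify $c_0>0$, which is the natural way to fill that small gap.
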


%The following asymptotic behaviour will be useful in the sequel.
The following asymptotic behaviour is needed to show that $\b^* > 0$.

\begin{lemma} \label{lemm12}
Let $\beta_n \to 0$ and let $(u_n,v_n)$ be a minimizer for $c_{\beta_n}$. Then either $u_n \to 0$ in $H^1_\alpha(\mathbb{R}^2)$ or $v_n \to 0$ in $H^1(\mathbb{R}^2)$ holds.
\end{lemma}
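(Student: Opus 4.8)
\textbf{Proof proposal for Lemma \ref{lemm12}.}
The plan is to argue by contradiction: suppose, up to a subsequence, that $\liminf_{n\to\infty}\|u_n\|_{H^1_{\alpha,\omega}}>0$ \emph{and} $\liminf_{n\to\infty}\|v_n\|_{H^1}>0$. The first thing to establish is that $(u_n,v_n)$ is bounded in $\mathbb{H}_\alpha$ and bounded away from $(0,0)$: boundedness follows because $c_{\beta_n}\to c_0$ (by Lemma \ref{lemm9}) and $I_{\beta_n}(u_n,v_n)=J(u_n,v_n)=\tfrac14\|(u_n,v_n)\|_{\mathbb{H}_\alpha}^2$ on $\mathcal{N}_{\beta_n}$, while the lower bound comes from Lemma \ref{lemm14}. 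Writing $u_n=\phi_{\lambda,n}+q_n\cg_\l$, the sequences $\{\phi_{\lambda,n}\}$, $\{v_n\}$ are bounded in $H^1(\RD)$ and $\{q_n\}$ is bounded in $\C$, so up to a subsequence we get weak limits $\phi_{\lambda,0},v_0$ and a limit $q_0$; by compact embedding on bounded sets one controls the quartic terms.

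Next I would use the contradiction hypothesis together with $\beta_n\to 0$ to pass to the limit in the Nehari constraint $G_{\beta_n}(u_n,v_n)=0$. Since $\beta_n\int_{\RD}|u_nv_n|^2\to 0$ (the integral is bounded, $\beta_n\to 0$), the coupling term disappears in the limit, so $(u_n,v_n)$ behaves asymptotically like a pair lying on the \emph{decoupled} Nehari set. Concretely, I would rescale: let $s_n>0$ be such that $(s_nu_n,s_nv_n)$ satisfies the decoupled relation $\langle(-\Delta_\alpha+\omega)u_n,u_n\rangle+\|\nabla v_n\|_2^2+\tilde\omega\|v_n\|_2^2=s_n^2(\|u_n\|_4^4+\|v_n\|_4^4)$; because the quartic terms are bounded below (Claim-2–type argument as in Lemma \ref{lemm9}, using the contradiction hypothesis, which forces $\|u_n\|_4^4+\|v_n\|_4^4$ bounded away from $0$) and the left side is bounded, $\{s_n\}$ is bounded, and in fact $s_n\to 1$ because $\beta_n\int|u_nv_n|^2\to0$. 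This gives
\[
c_0\le I_0(s_nu_n,s_nv_n)=I_{\beta_n}(u_n,v_n)+o(1)=c_{\beta_n}+o(1)=c_0+o(1),
\]
so $(s_nu_n,s_nv_n)$ is a minimizing sequence for $c_0=c_0(\omega,\tilde\omega)$.

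Now the key point: since the limit problem at $\beta=0$ is \emph{decoupled}, $c_0=c_0(\omega,\tilde\omega)$ equals $\min\{d(\omega),d^0(\tilde\omega)\}$, and — this is the heart of the matter — every minimizing sequence (hence every minimizer) for $c_0$ must be ``scalar in the limit'': one of the two components must vanish in norm. This I would prove by a splitting/concentration argument on the minimizing sequence $(s_nu_n,s_nv_n)$, showing that if both components stayed bounded away from zero one could strictly lower the energy below $\min\{d(\omega),d^0(\tilde\omega)\}$ by separating them — more precisely, writing $\tilde u_n=s_nu_n$, $\tilde v_n=s_nv_n$ and choosing $a_n,b_n>0$ with $(a_n\tilde u_n,0)\in\mathcal{N}_\omega$-type set and $(0,b_n\tilde v_n)\in\mathcal{M}^0_{\tilde\omega}$, one gets $c_0\le S_\omega(a_n\tilde u_n)+S^0_{\tilde\omega}(b_n\tilde v_n)\to$ something that, combined with $a_n,b_n\to$ limits determined by the quartic masses, contradicts strict sub-additivity unless one mass is zero. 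Translating back, $\|\tilde u_n\|_4\to0$ or $\|\tilde v_n\|_4\to0$; since $\{s_n\}$ is bounded below, the same holds for $\|u_n\|_4$ or $\|v_n\|_4$, and then the Nehari relation forces $u_n\to0$ in $H^1_{\alpha,\omega}$ or $v_n\to0$ in $H^1$ — contradicting the assumption, which completes the proof.

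\textbf{Main obstacle.} The delicate step is the strict sub-additivity / decoupling argument at $\beta=0$ (that no minimizing sequence for $c_0$ can have both components nonvanishing). One must handle the possibility that $\|u_n\|_4\not\to0$ and $\|v_n\|_4\not\to0$ simultaneously and rule it out using the fact that $c_0(\omega,\tilde\omega)=\min\{d(\omega),d^0(\tilde\omega)\}$ together with the strict inequality $S_\omega(a u)+S^0_{\tilde\omega}(b v)>\min\{d(\omega),d^0(\tilde\omega)\}$ whenever both $u,v\ne0$ — essentially the standard fact, proved e.g. as in \cite{KurataOsada21CPAA,LW}, that at $\beta=0$ the ground state of the system is scalar. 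Care is also needed because $u_n$ carries a singular part $q_n\cg_\l\notin H^1$, but this is benign: the quartic estimates and the decoupling only see the full $u_n$ via $\|u_n\|_4$ and $\langle(-\Delta_\alpha+\omega)u_n,u_n\rangle$, both of which behave well.
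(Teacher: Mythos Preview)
Your overall strategy --- assume both components stay bounded away from zero, pass to the decoupled $\beta=0$ problem, and reach a $2c_0\le c_0+o(1)$ type contradiction --- is exactly the paper's. The gap is in the step you yourself flag as the ``Main obstacle'': controlling the separate projections $a_n\tilde u_n\in\mathcal{M}_\omega$, $b_n\tilde v_n\in\mathcal{M}^0_{\tilde\omega}$. You write that $a_n,b_n$ tend to ``limits determined by the quartic masses'', but $a_n^2=\langle(-\Delta_\alpha+\omega)\tilde u_n,\tilde u_n\rangle/\|\tilde u_n\|_4^4$ depends on the \emph{ratio} of quadratic form to quartic, and for a generic element of $\mathcal{N}_0$ this ratio is completely unconstrained. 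The same issue reappears in your last sentence: the \emph{joint} Nehari relation $G_{\beta_n}(u_n,v_n)=0$ alone does not force $\|u_n\|_{H^1_{\alpha,\omega}}\to0$ from $\|u_n\|_4\to0$.

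What fills the gap --- and what the paper uses explicitly --- is that a minimizer $(u_n,v_n)$ is a \emph{critical point} of $I_{\beta_n}$ (Lemma~\ref{min-crit}), so testing each equation against its own unknown gives the \emph{componentwise} identities
\[
\langle(-\Delta_\alpha+\omega)u_n,u_n\rangle=\|u_n\|_4^4+\beta_n\!\int_{\RD}|u_nv_n|^2,\qquad
\|\nabla v_n\|_2^2+\tilde\omega\|v_n\|_2^2=\|v_n\|_4^4+\beta_n\!\int_{\RD}|u_nv_n|^2.
\]
With these in hand the paper projects each component separately onto its own Nehari set ($s_nu_n\in\mathcal{M}_\omega$, $t_nv_n\in\mathcal{M}^0_{\tilde\omega}$), reads off $s_n,t_n\ge1$ and $s_n,t_n\to1$ (since $\beta_n\mathcal{C}_n\to0$ and the quartics $\mathcal{B}_{i,n}$ are bounded below by the contradiction hypothesis), and concludes
\[
2c_0\le d(\omega)+d^0(\tilde\omega)\le S_\omega(s_nu_n)+S^0_{\tilde\omega}(t_nv_n)=I_{\beta_n}(u_n,v_n)+o(1)=c_{\beta_n}+o(1)\to c_0.
\]
Once you insert the componentwise identities, your $a_n,b_n$ become exactly the paper's $s_n,t_n$ and the argument closes; the intermediate passage through a single rescaling factor and an abstract ``minimizing sequence for $c_0$ is scalar in the limit'' principle is then an unnecessary detour. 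The references you cite for that principle also rely, in the end, on the same critical-point information.
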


\begin{proof}
Suppose, by contradiction, that $u_n \not \to 0$ in $H^1_\alpha(\mathbb{R}^2)$ and $v_n \not \to 0$ in $H^1(\mathbb{R}^2)$. Therefore up to a subsequence, there exists $C > 0$ such that
\begin{align*}
\|u_n\|_{H^1_{\alpha,\o}} \ge C\ \text{and}\ \|v_n\|_{H^1} \ge C\quad \text{for all}\ n \in \mathbb{N}.
\end{align*}
Let $\{s_n\}_{n}$ and $\{t_n\}_{n}$ be two positive number sequences such that $s_n u_n \in \mathcal{M}_\omega$ and $t_n v_n \in \mathcal{M}_{\tilde{\omega}}^0$. Therefore, we have
\begin{align}
\label{eq7}
&\mathcal{A}_{1,n} := \langle (- \Delta_\alpha + \omega) u_n , u_n \rangle = s_n^2 \|u_n\|_4^4 =: s_n^2 \mathcal{B}_{1,n},\\
\label{eq8}
&\mathcal{A}_{2,n} := \|\nabla v_n\|_2^2 + \tilde{\omega} \|v_n\|_2^2 = t_n^2 \|v_n\|_4^4 = :t_n^2 \mathcal{B}_{2,n}.
\end{align}
Since $(u_n,v_n)$ is a ground state of \eqref{eq4} with $\beta = \beta_n$,  we have
\begin{align*}
\mathcal{A}_{1,n} = \mathcal{B}_{1,n} + \beta_n \mathcal{C}_n,\quad 
\mathcal{A}_{2,n} = \mathcal{B}_{2,n} + \beta_n \mathcal{C}_n,
\end{align*}
where $\mathcal{C}_n := \int_{\mathbb{R}^2} |u_n v_n|^2$ and, moreover, by Lemma \ref{lemm10},
\begin{align*}
c_0 \ge c_{\beta_n} = I_{\beta_n}(u_n,v_n) = \frac{1}{4} (\mathcal{A}_{1,n} + \mathcal{A}_{2,n}).
\end{align*}
Therefore $\{\mathcal{A}_{1,n}\}_n$ and $\{\mathcal{A}_{2,n}\}_n$ are bounded. By the Sobolev embedding $H^1(\mathbb{R}^2) \hookrightarrow L^4(\mathbb{R}^2)$, there exists $C_2 > 0$ such that $\mathcal{B}_{1,n}, \mathcal{B}_{2,n}, \mathcal{C}_n \le C_2$ for all $n \in \mathbb{N}$. From the assumption of the contradiction, there exists $C_1 > 0$ such that $\mathcal{B}_{1,n} \ge C_1$ and $\mathcal{B}_{2,n} \ge C_1$. 
\\
There are two possibilities.
\ \\
\textsc{Case 1.}\quad Up to a subsequence, either $s_n \le 1$, for all $n \in \mathbb{N}$, or $t_n \le 1$, for all $n \in \mathbb{N}$ holds. \\
If $s_n \le 1$, for all $n \in \mathbb{N}$, holds, since $s_n u_n \in \mathcal{M}_\omega$, we have
\begin{align*}
c_0 \le d(\omega) \le S_\omega(s_n u_n) = \frac{s_n^4}{4} \mathcal{B}_{1,n} \le \frac{1}{4} \mathcal{B}_{1,n}. 
\end{align*}
Moreover, since $(u_n,v_n) \in \mathcal{N}_{\beta_n}$, we have
\begin{align*}
c_0 + C_1 \le \frac{1}{4} (\mathcal{B}_{1,n} +\mathcal{B}_{2,n}) 
= S_\omega(u_n) + S_{\tilde{\omega}}^0(v_n) - \beta_n \mathcal{C}_n
 \le I_{\beta_n}(u_n,v_n) = c_{\beta_n}.
\end{align*}
From Lemma \ref{lemm9}, we have $c_{\beta_n} \to c_0$. This is a contradiction.
\\
Instead, if  $t_n \le 1$, for all $n \in \mathbb{N}$, the arguments are similar.
\ \\
\textsc{Case 2.} Up to a subsequence, $s_n > 1\ (\text{for all}\ n \in \mathbb{N})$ and $t_n > 1\ (\text{for all}\ n \in \mathbb{N})$.
\ \\
Since $(u_n,v_n) \in \mathcal{N}_{\beta_n}$, we have
\begin{align*}
\mathcal{A}_{1,n} + \mathcal{A}_{2,n} = \mathcal{B}_{1,n} + \mathcal{B}_{2,n} + 2 \beta_n \mathcal{C}_n. 
\end{align*}
From \eqref{eq7} and \eqref{eq8}, it follows that
\begin{align*}
(s_n^2 - 1) \mathcal{B}_{1,n} + (t_n^2 - 1) \mathcal{B}_{2,n} = 2 \beta_n \mathcal{C}_n.
\end{align*}
Therefore $s_n = 1 + o(1)$ and $t_n = 1 + o(1)$ hold. Then
\begin{align*}
2 c_0 \le S_\omega(s_n u_n) + S_{\tilde{\omega}}^0(t_n v_n) = I_{\beta_n}(u_n,v_n) + o(1) = c_0 + o(1). 
\end{align*}
This is a contradiction.
\end{proof}

\begin{lemma} \label{lemm13}
We have that $\beta^* > 0$. 
\end{lemma}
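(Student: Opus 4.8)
The plan is to argue by contradiction: suppose $\beta^*=0$. Since $c_\beta\le c_0$ for all $\beta\ge0$ (Lemma \ref{lemm10}) and $\beta^*$ is the \emph{maximum} of $\{\beta\ge0\mid c_\beta=c_0\}$, this means $c_\beta<c_0$ for every $\beta>0$. First I would record two elementary facts. (a) $d(\omega)\ge c_0$ and $d^0(\tilde\omega)\ge c_0$: if $u\in\mathcal{M}_\omega$ then $(u,0)\in\mathcal{N}_0$ with $I_0(u,0)=S_\omega(u)$, and similarly $(0,v)\in\mathcal{N}_0$ for $v\in\mathcal{M}_{\tilde\omega}^0$; taking infima gives the claim. (b) Consequently, for any $\beta>0$ every minimizer $(u_0,v_0)$ for $c_\beta$ is \emph{vector}: a minimizer of the form $(u_0,0)$ would force $u_0\in\mathcal{M}_\omega$ (from $G_\beta(u_0,0)=0$), whence $c_\beta=S_\omega(u_0)\ge d(\omega)\ge c_0>c_\beta$, a contradiction, and likewise $(0,v_0)$ is excluded. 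Also $c_0>0$ by Lemma \ref{lemm14}.

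Now take $\beta_n\downarrow0$ and let $(u_n,v_n)$ be minimizers for $c_{\beta_n}$ (which exist by Theorem \ref{th:ex}), normalized to be non-negative. By (b) they are vector, so $u_n\neq0$ and $v_n\neq0$. By Lemma \ref{lemm12} we may assume, up to a subsequence, that $v_n\to0$ in $H^1(\mathbb{R}^2)$ (the case $u_n\to0$ is symmetric, exchanging the roles of $u$ and $v$ and of $d(\omega)$ and $d^0(\tilde\omega)$). Since $c_{\beta_n}\le c_0$ we have $\|(u_n,v_n)\|_{\mathbb{H}_\alpha}^2=4c_{\beta_n}$ bounded, so $u_n$ is bounded in $H^1_\alpha(\mathbb{R}^2)$ and $v_n$ in $H^1(\mathbb{R}^2)$; in particular $\|u_n\|_4^4$, $\|v_n\|_4^4$ and $\mathcal{C}_n:=\int_{\mathbb{R}^2}|u_nv_n|^2$ are bounded, and by Hölder $\mathcal{C}_n^2\le\|u_n\|_4^4\|v_n\|_4^4$. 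Moreover, by Lemma \ref{lemm14} and $v_n\to0$, $u_n\not\to0$, so (using the testing relation below) $\|u_n\|_4^4$ is bounded away from $0$.

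Because each $(u_n,v_n)$ is a ground state (Lemma \ref{min-crit}), testing the two equations of $(\mathcal{P}_{\beta_n})$ with $u_n$ and $v_n$ respectively gives $\langle(-\Delta_\alpha+\omega)u_n,u_n\rangle=\|u_n\|_4^4+\beta_n\mathcal{C}_n$ and $\|\nabla v_n\|_2^2+\tilde\omega\|v_n\|_2^2=\|v_n\|_4^4+\beta_n\mathcal{C}_n$. Choose $s_n,t_n>0$ with $s_nu_n\in\mathcal{M}_\omega$ and $t_nv_n\in\mathcal{M}_{\tilde\omega}^0$; then $s_n^2=1+\beta_n\mathcal{C}_n/\|u_n\|_4^4$, and an elementary one–variable computation yields $S_\omega(s_nu_n)-S_\omega(u_n)=(\beta_n\mathcal{C}_n)^2/(4\|u_n\|_4^4)$, and analogously $S_{\tilde\omega}^0(t_nv_n)-S_{\tilde\omega}^0(v_n)=(\beta_n\mathcal{C}_n)^2/(4\|v_n\|_4^4)$. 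Since $(s_nu_n,0),(0,t_nv_n)\in\mathcal{N}_{\beta_n}$, we have $S_\omega(s_nu_n)\ge d(\omega)\ge c_0$ and $S_{\tilde\omega}^0(t_nv_n)\ge d^0(\tilde\omega)\ge c_0$. Plugging these into the identity $c_{\beta_n}=I_{\beta_n}(u_n,v_n)=S_\omega(u_n)+S_{\tilde\omega}^0(v_n)-\tfrac{\beta_n}{2}\mathcal{C}_n$ gives
\[
c_0>c_{\beta_n}\ge 2c_0-\tfrac{\beta_n}{2}\mathcal{C}_n-\tfrac{(\beta_n\mathcal{C}_n)^2}{4}\Big(\tfrac{1}{\|u_n\|_4^4}+\tfrac{1}{\|v_n\|_4^4}\Big),
\]
hence $0<c_0<\tfrac{\beta_n}{2}\mathcal{C}_n+\tfrac{(\beta_n\mathcal{C}_n)^2}{4}\big(\|u_n\|_4^{-4}+\|v_n\|_4^{-4}\big)$. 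The first term is $O(\beta_n)\to0$; for the second, the Hölder bound $\mathcal{C}_n^2\le\|u_n\|_4^4\|v_n\|_4^4$ gives $(\beta_n\mathcal{C}_n)^2/(4\|u_n\|_4^4)\le\tfrac14\beta_n^2\|v_n\|_4^4$ and $(\beta_n\mathcal{C}_n)^2/(4\|v_n\|_4^4)\le\tfrac14\beta_n^2\|u_n\|_4^4$, both $\to0$ by boundedness. Thus the right-hand side tends to $0$, contradicting $c_0>0$. Therefore $\beta^*>0$. The main delicate point is the ``two competitor'' estimate: one must use \emph{both} scalar rescalings $(s_nu_n,0)$ and $(0,t_nv_n)$ simultaneously — using only one yields $c_{\beta_n}\to c_0$ with no contradiction — and one must control the term $1/\|v_n\|_4^4$, which blows up as $v_n\to0$, through the Hölder bound on $\mathcal{C}_n$.
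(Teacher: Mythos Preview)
Your proof is correct, but it takes a different route from the paper's.

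The paper, after assuming $\beta^*=0$ and obtaining vector minimizers $(u_n,v_n)$ for $c_{\beta_n}$ with $\beta_n\to0$, invokes Lemma~\ref{lemm12} in an essential way: assuming (say) $u_n\to0$ in $H^1_\alpha(\mathbb{R}^2)$, it uses only the single testing relation $\mathcal{A}_n:=\langle(-\Delta_\alpha+\omega)u_n,u_n\rangle=\|u_n\|_4^4+\beta_n\mathcal{C}_n$ together with the embedding of Lemma~\ref{lemm20} to obtain $(1-C\beta_n)\mathcal{A}_n\le C\mathcal{A}_n^2$, hence $\mathcal{A}_n\ge 1/(2C)$ for large $n$, contradicting $\mathcal{A}_n\to0$. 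This is short and purely norm-based.

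Your argument is energy-based: you project \emph{both} components onto their respective scalar Nehari sets, compute the projection cost exactly as $(\beta_n\mathcal{C}_n)^2/(4\|u_n\|_4^4)$ and $(\beta_n\mathcal{C}_n)^2/(4\|v_n\|_4^4)$, and then exploit $S_\omega(s_nu_n)\ge c_0$ and $S^0_{\tilde\omega}(t_nv_n)\ge c_0$ simultaneously to arrive at $c_0<\tfrac{\beta_n}{2}\mathcal{C}_n+\tfrac{\beta_n^2}{4}(\|u_n\|_4^4+\|v_n\|_4^4)\to0$, the potentially dangerous denominator $1/\|v_n\|_4^4$ being absorbed via H\"older. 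Note that this final estimate uses only the boundedness of $\|u_n\|_4^4,\|v_n\|_4^4,\mathcal{C}_n$ and $\beta_n\to0$; your appeal to Lemma~\ref{lemm12} (and the remark that $\|u_n\|_4^4$ is bounded below) is in fact superfluous and can be dropped. So your approach is slightly longer but more self-contained, bypassing the dichotomy of Lemma~\ref{lemm12} that the paper relies on.
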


\begin{proof}
Assume that $\beta^* = 0$. From Theorem \ref{th:ex},  there exists $\beta_n > 0$ and a minimizer $(u_n,v_n)$ for $c_{\beta_n}$ such that $\beta_n \to 0$ and  $u_n \neq 0$ and $v_n \neq 0$, since $c_{\beta_n}<c_0$, for any $n\in \N$.  Moreover $\{(u_n,v_n)\}_n$ is bounded in $\mathbb{H}_\a$.
From Lemma \ref{lemm12}, it follows that $u_n \to 0$ in $H^1_\alpha(\mathbb{R}^2)$ or $v_n \to 0$ in $H^1(\mathbb{R}^2)$. We consider the case $u_n \to 0$ in $H^1_\alpha(\mathbb{R}^2)$. 
Since $(u_n,v_n)$ is a ground state of \eqref{eq4} with $\beta = \beta_n$, we have
\begin{align*}
\mathcal{A}_n := \langle (- \Delta_\alpha + \omega)u_n,u_n \rangle &= \|u_n\|_4^4 + \beta_n \int_{\mathbb{R}^2} |u_n v_n|^2 \le C (\|u_n\|_{H^1}^4 + \beta_n \|u_n\|_{H^1}^2 \|v_n\|_{H^1}^2)\\
 &\le C (\mathcal{A}_n^2 + \beta_n \mathcal{A}_n). 
\end{align*}
Thus we have $(1 - C \beta_n) \mathcal{A}_n \le C \mathcal{A}_n^2$. We may assume $C \beta_n < 1/2$ for $n$ sufficiently large. Therefore
\begin{align*}
\mathcal{A}_n \ge \frac{1}{2C}\quad \text{for}\ n\ \text{sufficiently large}.
\end{align*}
This contradicts $\mathcal{A}_n \to 0$. 
\\
A similar argument works for the case $v_n \to 0$ in $\H$.
\end{proof}

Keeping in mind that $\cg_\omega$ is positive, the following holds. 
\begin{lemma} \label{lemm18}
Let $(u_0,v_0)$ be a minimizer for $c_\beta$ and, taking $\lambda = \omega$, $u_0 = \phi_{\omega,0} + q_0 \cg_\omega$. Then $(w_0,|v_0|)$ is also a minimizer for $c_\beta$, where $w_0 = |\phi_{\omega,0}| + |q_0| \cg_\omega$. 
\end{lemma}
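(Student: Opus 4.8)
The plan is to show that passing to absolute values preserves membership in the Nehari manifold $\mathcal{N}_\beta$ while not increasing the functional $I_\beta$, so that a minimizer is mapped to a minimizer. I would take $\lambda = \omega$ throughout, so that $\|u\|_{H^1_{\alpha,\omega}}^2 = \|\nabla\phi_{\omega,0}\|_2^2 + \omega\|\phi_{\omega,0}\|_2^2 + (\alpha+\theta_\omega)|q_0|^2$ and the $\|u_0\|_2^2$-term disappears. First I would record the key pointwise/integral inequalities: since $\cg_\omega$ is positive, real and radial, we have $|u_0| = |\phi_{\omega,0} + q_0\cg_\omega| \le |\phi_{\omega,0}| + |q_0|\cg_\omega = w_0$ pointwise, and by the diamagnetic-type inequality $|\nabla|\phi_{\omega,0}|| \le |\nabla\phi_{\omega,0}|$ a.e.\ and likewise for $v_0$. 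Note also that $w_0 = |\phi_{\omega,0}| + |q_0|\cg_\omega \in H^1_\alpha(\mathbb{R}^2)$ with regular part $|\phi_{\omega,0}| \in H^1(\mathbb{R}^2)$ and charge exactly $|q_0|$ (the charge of the new function is $|q_0|$ by construction, and is well-defined independently of $\lambda$ by Lemma~\ref{lem:q}).

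The central computation is to compare $\|(w_0,|v_0|)\|_{\mathbb{H}_\alpha}^2$ with $\|(u_0,v_0)\|_{\mathbb{H}_\alpha}^2$ and the nonlinear terms. For the quadratic form: $\|\nabla|\phi_{\omega,0}|\|_2^2 \le \|\nabla\phi_{\omega,0}\|_2^2$, $\|\,|\phi_{\omega,0}|\,\|_2^2 = \|\phi_{\omega,0}\|_2^2$, the charge term is unchanged since $|\,|q_0|\,|^2 = |q_0|^2$ and $\alpha+\theta_\omega > 0$, and similarly $\|\nabla|v_0|\|_2^2 \le \|\nabla v_0\|_2^2$, $\|\,|v_0|\,\|_2^2 = \|v_0\|_2^2$; hence $\|(w_0,|v_0|)\|_{\mathbb{H}_\alpha}^2 \le \|(u_0,v_0)\|_{\mathbb{H}_\alpha}^2$. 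For the nonlinear terms: from $|u_0| \le w_0$ we get $\|w_0\|_4^4 \ge \|u_0\|_4^4$, $\|\,|v_0|\,\|_4^4 = \|v_0\|_4^4$, and $\int |w_0|v_0||^2 = \int w_0^2 v_0^2 \ge \int |u_0 v_0|^2$. Therefore $G_\beta(w_0,|v_0|) \le G_\beta(u_0,v_0) = 0$. If $G_\beta(w_0,|v_0|) = 0$ we are on the Nehari manifold directly; if $G_\beta(w_0,|v_0|) < 0$, then by Lemma~\ref{lemm16} there is $t_0 \in (0,1)$ with $(t_0 w_0, t_0|v_0|) \in \mathcal{N}_\beta$, and one estimates
\begin{align*}
c_\beta \le I_\beta(t_0 w_0, t_0|v_0|) = \tfrac{t_0^2}{4}\|(w_0,|v_0|)\|_{\mathbb{H}_\alpha}^2 \le \tfrac{1}{4}\|(w_0,|v_0|)\|_{\mathbb{H}_\alpha}^2 \le \tfrac{1}{4}\|(u_0,v_0)\|_{\mathbb{H}_\alpha}^2 = I_\beta(u_0,v_0) = c_\beta,
\end{align*}
forcing $t_0 = 1$, a contradiction with $t_0 < 1$ unless in fact $G_\beta(w_0,|v_0|)=0$. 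So $(w_0,|v_0|) \in \mathcal{N}_\beta$, and then $I_\beta(w_0,|v_0|) = \tfrac14\|(w_0,|v_0|)\|_{\mathbb{H}_\alpha}^2 \le \tfrac14\|(u_0,v_0)\|_{\mathbb{H}_\alpha}^2 = I_\beta(u_0,v_0) = c_\beta$, while $I_\beta(w_0,|v_0|) \ge c_\beta$ by definition of the infimum; hence equality and $(w_0,|v_0|)$ is a minimizer for $c_\beta$.

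The main obstacle I anticipate is the bookkeeping around the point interaction: one must be careful that $w_0$, defined by taking absolute values of the regular part and the charge separately, genuinely lies in $H^1_\alpha(\mathbb{R}^2)$ with charge $|q_0|$, and that the quadratic form evaluated at $w_0$ is given by the stated formula with $\lambda = \omega$ — this is exactly where positivity of $\cg_\omega$ and the sign condition $\alpha + \theta_\omega > 0$ (valid since $\omega > \omega_\alpha$) are used. The diamagnetic inequality $|\nabla|f|| \le |\nabla f|$ for $f \in H^1$ is standard, and the pointwise bound $|u_0| \le w_0$ is immediate from the triangle inequality once one knows $\cg_\omega \ge 0$; the rest is the routine monotonicity of the $L^4$ and coupling integrals under this pointwise bound.
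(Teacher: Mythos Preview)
Your proof is correct and follows essentially the same approach as the paper: both exploit the pointwise bound $|u_0|\le w_0$ (from positivity of $\cg_\omega$), the diamagnetic inequality on the regular parts, and the Nehari projection via Lemma~\ref{lemm16} to force $t_0=1$. The only cosmetic difference is that the paper runs the single chain $c_\beta = I_\beta(u_0,v_0) \ge I_\beta(t_0 u_0, t_0 v_0) \ge I_\beta(t_0 w_0, t_0 |v_0|) \ge c_\beta$ and invokes uniqueness of the maximum along the ray, whereas you first observe $G_\beta(w_0,|v_0|)\le 0$ and argue by contradiction on the strict case; the content is identical.
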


\begin{proof}
%\orange{From the introduction in Fukaya-Georgiev-Ikeda \cite{FGI22}
From Lemma \ref{lemm16}, there unique exists $t_0 > 0$ such that $(t_0 w_0, t_0 |v_0|) \in \mathcal{N}_\beta$. Since $|u_0| \le w_0$, it follows that
\begin{align*}
\|\nabla \phi_{\omega,0}\|_2^2 \ge \|\nabla |\phi_{\omega,0}|\|_2^2,\quad
\|\nabla v_0\|_2^2 \ge \|\nabla |v_0|\|_2^2,\quad
\|u_0\|_4^4 \le \|w_0\|_4^4,\quad
\int_{\mathbb{R}^2} |u_0 v_0|^2 \le \int_{\mathbb{R}^2} |w_0 v_0|^2.
\end{align*}
Therefore
\begin{align*}
c_\beta = I_\beta(u_0,v_0) \ge I_\beta(t_0 u_0, t_0 v_0)
 \ge I_\beta(t_0 w_0, t_0 |v_0|) \ge c_\beta.
\end{align*}
From the uniqueness of $t_0$, we have $t_0 = 1$ and $(w_0,|v_0|) \in \mathcal{N}_\beta$, hence $(w_0,|v_0|)$ is also a minimizer for $c_\beta$. 
\end{proof}

\begin{lemma} \label{lemm19}
Let $(u_0,v_0)$ be a vector minimizer for $c_\beta$. Then $\int_{\mathbb{R}^2} |u_0 v_0|^2 > 0$ holds. 
\end{lemma}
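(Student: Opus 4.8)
\textbf{Proof proposal for Lemma \ref{lemm19}.}
Suppose, toward a contradiction, that $(u_0,v_0)$ is a vector minimizer for $c_\beta$ with $\int_{\RD}|u_0 v_0|^2 = 0$. The plan is to show that in this case $u_0$ and $v_0$ are, separately, minimizers for the single-equation problems, and that this forces a strict inequality contradicting minimality of $c_\beta$. First I would note that since $(u_0,v_0)\in\mathcal{N}_\beta$ and the coupling term vanishes, $G_\beta(u_0,v_0)=0$ decouples into
\[
\langle(-\Delta_\alpha+\omega)u_0,u_0\rangle=\|u_0\|_4^4,\qquad \|\nabla v_0\|_2^2+\tilde\omega\|v_0\|_2^2=\|v_0\|_4^4,
\]
using that both summands on the left are nonnegative (for $u_0$ this uses $\omega>\omega_\alpha$, which guarantees the quadratic form is positive as in Lemma \ref{lemm20}). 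Indeed, one cannot have one of the two identities fail with a compensating sign, because each bracket is $\ge 0$ while each $L^4$ term is $\ge 0$; if, say, $\langle(-\Delta_\alpha+\omega)u_0,u_0\rangle<\|u_0\|_4^4$ then necessarily $\|\nabla v_0\|_2^2+\tilde\omega\|v_0\|_2^2>\|v_0\|_4^4$, and I would rule this out by the usual scaling/projection argument: scale $u_0$ down to land on $\mathcal{M}_\omega$ and $v_0$ up to land on $\mathcal{M}_{\tilde\omega}^0$ (or vice versa), producing $c_\beta = I_\beta(u_0,v_0)=S_\omega(u_0)+S_{\tilde\omega}^0(v_0)$, and then comparing with $d(\omega)+d^0(\tilde\omega)$ evaluated at the rescaled functions.

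More precisely, since the coupling vanishes, $I_\beta(u_0,v_0)=S_\omega(u_0)+S_{\tilde\omega}^0(v_0)$ and $G_\beta(u_0,v_0)=\langle(-\Delta_\alpha+\omega)u_0,u_0\rangle-\|u_0\|_4^4+\|\nabla v_0\|_2^2+\tilde\omega\|v_0\|_2^2-\|v_0\|_4^4$. Because $u_0\neq 0$ and $v_0\neq 0$, let $s_0>0$ with $s_0 u_0\in\mathcal{M}_\omega$ and $t_0>0$ with $t_0 v_0\in\mathcal{M}_{\tilde\omega}^0$; these exist and are unique by the standard single-equation Nehari fibering (as in Lemma \ref{lemm16}). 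Then
\[
c_\beta = I_\beta(u_0,v_0) \ge S_\omega(s_0 u_0)+S_{\tilde\omega}^0(t_0 v_0) \ge d(\omega)+d^0(\tilde\omega),
\]
where the first inequality holds because $t\mapsto S_\omega(tu_0)$ is maximized on $\mathcal{N}$-type sets at the Nehari point, and similarly for $v_0$ — wait, the direction needs care: on the Nehari manifold the functional equals $\tfrac14$ of the norm, so I would instead argue that the fibering maps attain their \emph{maxima} at $s_0,t_0$, giving $S_\omega(u_0)\le S_\omega(s_0u_0)$ only if $(u_0,v_0)$ itself were on the product Nehari manifold, which it is. So in fact $s_0=t_0=1$ is forced: since $G_\beta(u_0,v_0)=0$ with vanishing coupling, $u_0\in\mathcal{M}_\omega$ and $v_0\in\mathcal{M}_{\tilde\omega}^0$ already (this is the decoupling identity above, which I must first establish is the only possibility). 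Hence $c_\beta=S_\omega(u_0)+S_{\tilde\omega}^0(v_0)\ge d(\omega)+d^0(\tilde\omega)$.

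On the other hand, $c_\beta\le\min\{d(\omega),d^0(\tilde\omega)\}$: the scalar pairs $(u,0)$ with $u\in\mathcal{M}_\omega$ and $(0,v)$ with $v\in\mathcal{M}_{\tilde\omega}^0$ all lie in $\mathcal{N}_\beta$, so taking infima gives $c_\beta\le d(\omega)$ and $c_\beta\le d^0(\tilde\omega)$. Combining, $d(\omega)+d^0(\tilde\omega)\le c_\beta\le\min\{d(\omega),d^0(\tilde\omega)\}$, which forces $d(\omega)\le 0$ or $d^0(\tilde\omega)\le 0$; but both levels are strictly positive (each is $\tfrac14$ of a positive norm on its Nehari manifold, bounded below by a Sobolev-type inequality as in Lemma \ref{lemm14}). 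This contradiction shows $\int_{\RD}|u_0 v_0|^2>0$.

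The main obstacle I anticipate is the very first step — justifying rigorously that $\int_{\RD}|u_0v_0|^2=0$ together with $G_\beta(u_0,v_0)=0$ forces the \emph{separate} identities $\langle(-\Delta_\alpha+\omega)u_0,u_0\rangle=\|u_0\|_4^4$ and $\|\nabla v_0\|_2^2+\tilde\omega\|v_0\|_2^2=\|v_0\|_4^4$ rather than merely their sum. This is where I would use that $(u_0,v_0)$ is a \emph{critical point} (Lemma \ref{min-crit}): testing $I_\beta'(u_0,v_0)=0$ against $(u_0,0)$ and against $(0,v_0)$ separately yields exactly the two decoupled Nehari identities (the cross terms drop out precisely because $\int|u_0v_0|^2=0$ implies $\int|u_0|^2|v_0|^2$-type integrands vanish pointwise a.e. on the supports). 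With the decoupling in hand the rest is the comparison argument above.
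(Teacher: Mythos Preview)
Your argument is correct and takes a genuinely different route from the paper's proof. The paper does not argue by contradiction on the energy levels; instead it passes to the ``positive'' competitor $(w_0,|v_0|)=(|\phi_{\omega,0}|+|q_0|\cg_\omega,\,|v_0|)$ via Lemma~\ref{lemm18}, shows this is again a minimizer and hence a critical point, and then applies the strong maximum principle to the regular parts to obtain $|\phi_{\omega,0}|>0$ and $|v_0|>0$ pointwise. A comparison of $\|u_0\|_4$ with $\|w_0\|_4$ then forces $|u_0|=w_0>0$ everywhere, so $\int_{\RD}|u_0v_0|^2>0$ follows from pointwise positivity of both factors.

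Your approach is more direct and avoids the maximum principle entirely: once you invoke Lemma~\ref{min-crit} and test $I_\beta'(u_0,v_0)=0$ against $(u_0,0)$ and $(0,v_0)$, the vanishing of the coupling integral gives $u_0\in\mathcal{M}_\omega$ and $v_0\in\mathcal{M}_{\tilde\omega}^0$ immediately, and the energy comparison $d(\omega)+d^0(\tilde\omega)\le c_\beta\le\min\{d(\omega),d^0(\tilde\omega)\}$ yields the contradiction cleanly. The somewhat hesitant middle part of your write-up (the fibering discussion with ``wait, the direction needs care'') is unnecessary once you commit to the critical-point route in your last paragraph; you can delete it. What the paper's longer argument buys is the extra information that, up to a constant phase, $u_0$ and $v_0$ are strictly positive --- information not contained in the statement of Lemma~\ref{lemm19} but of independent interest. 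For the lemma as stated, your proof is shorter and perfectly adequate.
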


\begin{proof}
Fix $\lambda = \omega$. Let $u_0 = \phi_{\omega,0} + q_0 \cg_\omega$. 
By multiplying $u_0$ by some constant $e^{i \theta}\ (\theta \in \mathbb{R})$, without loss of generality, we may assume $q_0 \ge 0$. 
\\
By Lemma \ref{lemm18}, $(w_0,|v_0|)$ is also a minimizer for $c_\beta$, where $w_0 = |\phi_{\omega,0}| + q_0 \cg_\omega$. 
%
%\noindent
%\textsc{Case 1:}\quad $\phi_{\omega,0} \neq 0$ and $q_0 \neq 0$.
%
\noindent
By Lemma \ref{min-crit}, it follows that $I_\beta'(w_0,|v_0|) = 0$. Therefore 
\begin{align}
\label{eq30}
\langle \nabla \chi_\o, \nabla |\phi_{\omega,0}| \rangle + \omega \langle \chi_\o, |\phi_{\omega,0}| \rangle + \overline{\xi} q_0 (\alpha + \theta_\omega) = \lr{\chi,w_0^3} + \lr{\chi,\b w_0 |v_0|^2}
\end{align}
for any $\chi = \chi_\omega + \xi \cg_\omega \in H^1_\alpha(\mathbb{R}^2)$. 
By the same argument as in Lemma \ref{lemm1}, we have $|\phi_{\omega,0}(0)| = (\alpha + \theta_\omega) q_0$.
If $\phi_{\omega,0} = 0$, then $q_0 = 0$, hence, $u_0 = 0$. This is a contradiction. Therefore $\phi_{\omega,0} \neq 0$.
Set $\xi = 0$ in \eqref{eq30}. For any $\chi_\omega \in H^1(\mathbb{R}^2)$ with $\chi_\omega \ge 0$, we have
\begin{align*}
\langle \nabla \chi_\omega, \nabla |\phi_{\omega,0}| \rangle + \omega \langle \chi_\omega, |\phi_{\omega,0}| \rangle = \lr{\chi_\o,w_0^3} + \lr{\chi_\o,\b w_0 |v_0|^2} \ge 0.
\end{align*}
By the strong maximum principle, $|\phi_{\omega,0}| > 0$, hence $q_0 > 0$. Similarly, we have $|v_0| > 0$. 
Since $I_\beta(u_0,v_0) = I_\beta(w_0,|v_0|)$, we have that $J(u_0,v_0) = J(w_0,|v_0|)$, too, and so $\|\nabla \phi_{\omega,0}\|_2^2 = \|\nabla |\phi_{\omega,0}|\|_2^2$, $\|\nabla v_0\|_2^2 = \|\nabla |v_0|\|_2^2$. 
%From Lieb-Loss \textcolor{green}{(後で引用箇所を確認する)}, 
%\\
%\red{why below? reference? As in paper of Colin-Jeanjean-Squassina? or $\t_i=\t_i(x)$ as in \cite{ABCT2}?}
%
%\red{$\to$ 
%My supervisor when I was in the PhD program, Professor Kazuhiro Kurata, taught me about this. See Claim 1 in page 2 of the file named ``note-20200526.pdf" in the dropbox.}
\\
Then there exist $\theta_1, \theta_2 \in \mathbb{R}$ and some positive functions $\Phi_{\omega,0}, \Psi_0 \in H^1(\mathbb{R}^2)$ such that
\begin{align*}
\phi_{\omega,0} = e^{i \theta_1} \Phi_{\omega,0},\quad v_0 = e^{i \theta_2} \Psi_0.
\end{align*} 
Moreover, since $\int_{\mathbb{R}^2} |u_0 v_0|^2 = \int_{\mathbb{R}^2} |w_0 v_0|^2$ and $|u_0| \le w_0$, we have $|u_0|^2 = w_0^2$, namely, 
\begin{align*}
|e^{i \theta_1} \Phi_{\omega,0} + q_0 \cg_\omega|^2 = |\Phi_{\omega,0} + q_0 \cg_\omega|^2.
\end{align*}
Hence, we have $e^{i \theta_1}= 1$. Therefore $\phi_{\omega,0} = \Phi_{\omega,0} > 0$. Thus we have $|u_0| = \phi_{\omega,0} + q_0 \cg_\omega > 0$. 
Therefore $\int_{\mathbb{R}^2} |u_0 v_0| > 0$. 
%\ \\
%\noindent
%\textsc{Case 2:}\quad $\phi_{\omega,0} = 0$.
%
%\noindent
%In this case, $u_0 = q_0 \cg_\omega > 0$. By the same argument as in \textsc{Case 1}, we have $|v_0| > 0$. 
%Therefore $\int_{\mathbb{R}^2} |u_0 v_0| > 0$. 
%
%\ \\
%\noindent
%\textsc{Case 3:}\quad $q_0 = 0$.
%
%\noindent
%In this case, $u_0 = \phi_{\omega,0} \neq 0$.
%By the same argument as in \textsc{Case 1}, we have $|\phi_{\omega,0}| > 0$ and $|v_0| > 0$. 
%Therefore $\int_{\mathbb{R}^2} |u_0 v_0| > 0$. 
\end{proof}

\begin{lemma}\label{le:dec}
The map $\b\mapsto c_\b$ is decreasing in $[\b^*,\infty)$ namely, $c_{\b_1} > c_{\b_2}$ holds for any $\b_1,\b_2 \in [\b^*,\infty)$ with $\b_1 < \b_2$.
\end{lemma}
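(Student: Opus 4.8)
The plan is to argue by contradiction, combining the monotonicity of $\beta\mapsto c_\beta$ (Lemma~\ref{lemm10}), the fibering map of Lemma~\ref{lemm16}, and the extremal definition of $\beta^*$ in \eqref{b*}. Suppose there exist $\beta^*\le\beta_1<\beta_2$ with $c_{\beta_1}=c_{\beta_2}$. By Theorem~\ref{th:ex} fix a minimizer $(u_0,v_0)$ for $c_{\beta_1}$, so that $(u_0,v_0)\in\mathcal{N}_{\beta_1}$ and $I_{\beta_1}(u_0,v_0)=c_{\beta_1}$, and by Lemma~\ref{lemm16} let $t_0>0$ be such that $(t_0u_0,t_0v_0)\in\mathcal{N}_{\beta_2}$.

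The first step is the chain
\[
c_{\beta_1}=I_{\beta_1}(u_0,v_0)\ \ge\ I_{\beta_1}(t_0u_0,t_0v_0)\ \ge\ I_{\beta_2}(t_0u_0,t_0v_0)\ \ge\ c_{\beta_2}=c_{\beta_1},
\]
where the first inequality holds because $t=1$ is the unique maximum point of $t\mapsto I_{\beta_1}(tu_0,tv_0)$ by Lemma~\ref{lemm16} (since $(u_0,v_0)\in\mathcal{N}_{\beta_1}$), the second because $I_{\beta_1}\ge I_{\beta_2}$ pointwise as $\beta_1<\beta_2$ and $\int_{\mathbb{R}^2}|u_0v_0|^2\ge0$, and the third because $(t_0u_0,t_0v_0)\in\mathcal{N}_{\beta_2}$. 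Hence every inequality is an equality; since
\[
I_{\beta_1}(t_0u_0,t_0v_0)-I_{\beta_2}(t_0u_0,t_0v_0)=\frac{\beta_2-\beta_1}{2}\,t_0^4\int_{\mathbb{R}^2}|u_0v_0|^2,
\]
and $t_0>0$, $\beta_2>\beta_1$, this forces $\int_{\mathbb{R}^2}|u_0v_0|^2=0$.

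The next step uses that the coupling term vanishes: then $G_0(u_0,v_0)=G_{\beta_1}(u_0,v_0)=0$ and $I_0(u_0,v_0)=I_{\beta_1}(u_0,v_0)=c_{\beta_1}$, so $(u_0,v_0)\in\mathcal{N}_0$ gives $c_0\le c_{\beta_1}$, and together with $c_0\ge c_{\beta_1}$ from Lemma~\ref{lemm10} we get $c_{\beta_1}=c_0$. By the definition of $\beta^*$ (see \eqref{b*} and Lemma~\ref{le:b*}) this forces $\beta_1\le\beta^*$, hence $\beta_1=\beta^*$; but then $c_{\beta_2}=c_{\beta_1}=c_0$ with $\beta_2>\beta^*$ contradicts the maximality of $\beta^*$. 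Therefore $c_{\beta_1}\ne c_{\beta_2}$, and since $c_{\beta_1}\ge c_{\beta_2}$ by Lemma~\ref{lemm10}, we conclude $c_{\beta_1}>c_{\beta_2}$. Most of the argument is bookkeeping with the scaling maps; the one genuinely load-bearing observation is that the equality case of the scaling/monotonicity chain decouples the two components ($\int_{\mathbb{R}^2}|u_0v_0|^2=0$), which lets the minimizer be tested against $\mathcal{N}_0$ and brings the defining (extremal) property of $\beta^*$ into play — that last linkage is the step I expect to require the most care to state cleanly.
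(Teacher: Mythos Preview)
Your proof is correct, and it takes a genuinely different route from the paper's. The paper argues directly: for $\beta_1>\beta^*$ any minimizer for $c_{\beta_1}$ is vector (since $c_{\beta_1}<c_0$), and then Lemma~\ref{lemm19} --- whose proof relies on the strong maximum principle and the regularity argument of \cite{ABCT2} --- guarantees $\int_{\mathbb{R}^2}|u_0v_0|^2>0$, so the middle inequality in the fibering chain is strict; the boundary case $\beta_1=\beta^*$ is handled separately via the definition of $\beta^*$. Your contradiction argument bypasses Lemma~\ref{lemm19} entirely: you let the chain collapse to equalities, read off $\int_{\mathbb{R}^2}|u_0v_0|^2=0$, transfer $(u_0,v_0)$ to $\mathcal{N}_0$, and contradict the maximality of $\beta^*$. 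This is more elementary and self-contained --- it needs nothing beyond Lemmas~\ref{lemm10}, \ref{lemm16} and the definition \eqref{b*} --- whereas the paper's route, though more direct, leans on the structurally deeper Lemma~\ref{lemm19}. Both case distinctions (yours at the end, the paper's at the start) are ultimately forced by the same extremal property of $\beta^*$.
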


\begin{proof}
Let $\b_1,\b_2 \in [\b^*,\infty)$ with $\b_1 < \b_2$. If $\b_1 = \b^*$, then by the definition of $\b^*$, we have $c_{\b_1} = c_{\b^*} = c_0 > c_{\b_2}$. 
If $\b_1 > \b^*$, then $c_{\b_1} < c_0$. Therefore, all minimizers for $c_{\b_1}$ are vector. 
Let $(u_0,v_0)$
be a vector minimizer for $c_{\beta_1}$. From Lemma \ref{lemm19}, $\int_{\mathbb{R}^2} |u_0 v_0|^2 > 0$ holds. 
From Lemma \ref{lemm16}, there exists $t_0 > 0$ such that $(t_0 u_0, t_0 v_0) \in \mathcal{N}_{\beta_2}$. Therefore
\begin{align*}
c_{\beta_1} = I_{\beta_1}(u_0,v_0) \ge I_{\beta_1}(t_0 u_0, t_0 v_0) 
 > I_{\beta_2}(t_0 u_0, t_0 v_0) \ge c_{\beta_2}.
\end{align*}
\end{proof}

\begin{proof}[\textbf{Proof of Theorem \ref{theo3}}]
Assume, by contradiction, that there exists a vector minimizer $(u_0,v_0)$ for $c_\beta$. Then from Lemma \ref{lemm19}, it follows that $\int_{\RD} |u_0 v_0|^2 > 0$. 
Arguing as in the proof of Lemma \ref{le:dec}, we have $c_\beta > c_{\beta'}$ for any $\beta < \beta' < \beta^*$. This contradicts $c_{\b'}=c_0$ for
 $0 \le \beta' < \beta^*$. Therefore, all the minimizers for $c_\beta$ are scalar. 
The final part of the statement follows easily.
\end{proof}

Next, we show that $c_\b$ cannot be attained by vector and regular ground state for any $\b \in [0,\infty)$. 

\begin{proof}[\textbf{Proof of Theorem \ref{theo4}}]
If $\b=0$, \eqref{eq4} reduces to two uncoupled nonlinear Schr\"odinger equations and the $c_0=\min\{d(\o),d^0(\tilde{\o})\}$ is achieved by only scalar ground states. 
\\
Let $\b>0$. If $c_\beta < c_\beta^0$, then by Theorem \ref{theo1}, all minimizers for $c_\beta$ are singular. So in this case, there is no vector and regular minimizer for $c_\beta$. 
We consider the case $c_\beta = c_\beta^0$. 
Assume, by contradiction, that there exists a vector and regular minimizer $(u_0,v_0)$ for $c_\beta$. 
Since $(u_0,v_0)$ is regular, $(u_0,v_0)$ becomes a minimizer for $c_\beta^0$. 
By Lemma \ref{lemm1}, we have $u_0 = 0$. This contradicts the fact that $(u_0,v_0)$ is vector.
\end{proof}

To prove Theorem \ref{theo5}, we need the following lemma.

\begin{lemma} \label{lemm4}
If $c_\beta = c_\beta^0$, then $c_\beta^0 = d^0(\tilde{\omega})$. 
\end{lemma}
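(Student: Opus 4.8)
The goal is to show that if $c_\beta = c_\beta^0$ then in fact $c_\beta^0 = d^0(\tilde\omega)$. The plan is to combine the structural information already available: by Lemma \ref{lemm1}, any minimizer $(u_0,v_0)$ for $c_\beta^0$ is also a minimizer for $c_\beta$ and satisfies $u_0 = 0$. Hence $(0,v_0) \in \mathcal{N}_\beta^0$, which forces $v_0 \in \mathcal{M}_{\tilde\omega}^0$, and $I_\beta^0(0,v_0) = S_{\tilde\omega}^0(v_0)$. Therefore $c_\beta^0 = I_\beta^0(0,v_0) = S_{\tilde\omega}^0(v_0) \ge d^0(\tilde\omega)$.

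For the reverse inequality, I would take a minimizer $v_*$ for $d^0(\tilde\omega)$, so that $v_* \in \mathcal{M}_{\tilde\omega}^0$ and hence $(0,v_*) \in \mathcal{N}_\beta^0$. This immediately gives $c_\beta^0 \le I_\beta^0(0,v_*) = S_{\tilde\omega}^0(v_*) = d^0(\tilde\omega)$. Combining the two inequalities yields $c_\beta^0 = d^0(\tilde\omega)$, as claimed.

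The only point that needs a little care is the first direction: one must invoke Lemma \ref{lemm1} to know that the first component of \emph{some} minimizer for $c_\beta^0$ vanishes (the reverse inequality $c_\beta^0 \le d^0(\tilde\omega)$ actually holds unconditionally, without the hypothesis $c_\beta = c_\beta^0$, since $(0,v)$ with $v \in \mathcal{M}_{\tilde\omega}^0$ always lies in $\mathcal{N}_\beta^0$). So the hypothesis $c_\beta = c_\beta^0$ is used exactly once, to apply Lemma \ref{lemm1} and conclude $u_0 = 0$ for a $c_\beta^0$-minimizer, which pins the value $c_\beta^0$ down to lie on the scalar branch. I do not expect any real obstacle here — it is a short bookkeeping argument assembling Lemma \ref{lemm1} with the definitions of the various Nehari manifolds and ground state levels; the main thing is to be careful about which inequality requires the hypothesis.
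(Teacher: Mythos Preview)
Your proposal is correct and follows essentially the same approach as the paper: the paper also notes that $c_\beta^0 \le d^0(\tilde\omega)$ is immediate, then takes a minimizer $(u_0,v_0)$ for $c_\beta^0$, invokes Lemma~\ref{lemm1} to get $u_0=0$, deduces $v_0 \in \mathcal{M}_{\tilde\omega}^0$, and concludes $c_\beta^0 = S_{\tilde\omega}^0(v_0) \ge d^0(\tilde\omega)$. Your commentary on where the hypothesis is actually used is also accurate.
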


\begin{proof}
It is clear that $c_\b^0 \le d^0(\tilde{\o})$. 
Let $(u_0,v_0)$ be a minimizer for $c_\b^0$. 
From Lemma \ref{lemm1}, $u_0 = 0$. Then $v_0 \in \mathcal{M}_{\tilde{\o}}^0$. Therefore
\begin{align*}
c_\b^0 = I_\b^0(u_0,v_0) = S_{\tilde{\o}}^0(v_0) \ge d^0(\tilde{\o}).
\end{align*}
\end{proof}

\begin{remark}
If $\o \le \tilde{\omega}$, the assumptions of Lemma \ref{lemm4} cannot be satisfied.
Because if $\omega \le \tilde{\omega}$, then by Lemma \ref{lemm2}, we have $c_\beta(\omega,\tilde{\omega}) < c_\beta^0(\omega,\tilde{\omega})$. 
\end{remark}

\begin{proof}[\textbf{Proof of Theorem \ref{theo5}}]
By Lemma \ref{lemm2} we immediately have $\omega > \tilde{\omega}$.
From Lemma \ref{lemm4}, $c_\beta^0 = d^0(\tilde{\omega})$ holds. 
Assume, by contradiction, that  $c_\beta < c_0$. Then we have $c_\beta < c_0 = \min \{d(\omega), d^0(\tilde{\omega})\} \le d^0(\tilde{\omega})$. 
This contradicts the fact that $c_\beta = c_\beta^0 = d^0(\tilde{\omega})$. 
Therefore $c_\b^0=c_\beta = c_0=d^0(\tilde{\omega})$. The fact that $c_0 = d^0(\tilde{\o})$ implies $d(\o) \ge d^0(\tilde{\o})$. Being   $\omega > \tilde{\omega}$ and $ c_0^0 = \min \{d^0(\omega), d^0(\tilde{\omega})\}=d^0(\tilde{\omega})$, we deduce the first part of the statement. 
\\
Since $c_{\beta^*} = c_0$, $c_{\beta_0}^0 = c_0^0$ and $c_\beta$ and $c_\beta^0$ are non-increasing on $[0,\infty)$, we have 
\begin{align}
&c_{\beta'} = c_0 = d^0(\tilde{\omega})\quad \text{for any}\ \beta' \in [0,\beta^*],\notag\\
\label{eq14}
&c_{\beta'}^0 = c_0^0 = d^0(\tilde{\omega})\quad \text{for any}\ \beta' \in [0,\beta_0]. 
\end{align}
Therefore we have 
$c_{\beta'} = c_{\beta'}^0\ (\text{for any}\ \beta' \in [0,\min\{\beta^*,\beta_0\}])$. 
It follows that 
\begin{align*}
&c_{\beta'} = c_0 = d^0(\tilde{\omega})\quad \text{for any}\ \beta' \in [0,\beta^*],\\
&c_{\beta'}^0 < c_0^0 = d^0(\tilde{\omega})\quad \text{for any}\ \beta' \in (\beta_0,\infty).
\end{align*}
We assume $\beta^* > \beta_0$. If we take $\beta' \in (\beta_0,\beta^*)$, then $d^0(\tilde{\omega}) = c_{\beta'} \le c_{\beta'}^0 < d^0(\tilde{\omega})$. This is a contradiction. Therefore we have $\beta^* \le \beta_0$. 
Thus we have 
\begin{align*}
c_{\beta'} = c_{\beta'}^0\quad \text{for any}\ \beta' \in [0,\beta^*]. 
\end{align*}
By \eqref{eq14} and since
\begin{align*}
c_{\beta'} < c_0 = d^0(\tilde{\omega})\quad \text{for any}\ \beta' \in (\beta^*,\infty),
\end{align*}
it follows that $c_{\beta'} < d^0(\tilde{\omega}) = c_{\beta'}^0\ (\text{for any}\ \beta' \in (\beta^*,\beta_0])$. Moreover, from Lemma \ref{lemm3}, we have $c_{\beta'} < c_{\beta'}^0$ for any $\beta' \in (\beta_0,\infty)$. Therefore we obtain
\begin{align*}
c_{\beta'} < c_{\beta'}^0\quad \text{for any}\ \beta' \in (\beta^*,\infty). 
\end{align*}
Hence, we have $\b \le \b^*$. 
\end{proof}

\begin{proof}[\textbf{Proof of Theorem \ref{theo7}}]

 Suppose \eqref{7i}. From Theorem \ref{theo5}, we have $d(\omega) \ge d^0(\tilde{\omega})$ and $\beta \le \beta^* (\le \beta_0)$. Therefore \eqref{7ii} holds.

\noindent
 Suppose \eqref{7ii}. Note that $d(\omega) < d^0(\omega)$. From $d(\omega) \ge d^0(\tilde{\omega})$, we have
\begin{align*}
&c_0 = \min \{d(\omega), d^0(\tilde{\omega})\} = d^0(\tilde{\omega}),\\
&c_0^0 = \min \{d^0(\omega), d^0(\tilde{\omega})\} = d^0(\tilde{\omega}).
\end{align*}
Thus $c_0 = c_0^0$ holds. 
Applying Theorem \ref{theo5} as $\beta = 0$, we obtain $\beta^* \le \beta_0$. 
From the assumption of \eqref{7ii}, $0 \le \beta \le \beta^*$ holds. Thus $0 \le \beta \le \beta^*$ and $0 \le \beta \le \beta_0$ hold. Hence $c_\beta = c_0$ and $c_\beta^0 = c_0^0$ hold. 
Since $c_0 = c_0^0$ holds, we obtain $c_\beta = c_\beta^0$. 

\end{proof}

Finally we conclude this section showing that  $c_\b$ is attained by a singular and vector ground state for $\b>\b^*$.

\begin{proof}[\textbf{Proof of Theorem \ref{theo6}}]
By the definition of $\b^*$ in \eqref{b*}, we deduce that  $c_\beta < c_0$ and so the ground state is vector. Moreover, 
from Remark \ref{remm}, being $\beta > \beta^*$, 
we have $c_\beta < c_\beta^0$. Therefore, by Theorem \ref{theo1}, we conclude that any minimizer for $c_\b$ is vector and singular.
\end{proof}

\section{Asymptotic behaviour as $\beta \to \infty$}
\label{b^*:to:inf}

In this last section, we study the asymptotic behaviour of ground state solutions of \eqref{eq4}, as $\b\to \infty$.

Let $(u,v)$ be a solution of \eqref{eq4}. 
Set $(w,z) = (\sqrt{\b} u, \sqrt{\b} v)$. Then $(w,z)$ satisfies the following system:
\begin{align}
\label{eq10}
\begin{cases}
- \Delta_\alpha w + \omega w = \frac{1}{\b} |w|^2 w + w |z|^2&\quad \mathrm{in}\ \mathbb{R}^2,\\
- \Delta z + \tilde{\omega} z = \frac{1}{\b} |z|^2 z + |w|^2 z&\quad \mathrm{in}\ \mathbb{R}^2.
\end{cases}
\tag{$\mathcal{\tilde{P}}_\beta$}
\end{align}
Moreover, we define the following corresponding functional and minimization problem
\[
\tilde{c}_\beta := \inf_{(w,z) \in \mathcal{\tilde{N}}_\beta} \tilde{I}_\beta(w,z),
\]
where $\tilde{I}_\beta : \mathbb{H}_\a \to \mathbb{R}$ is the functional so defined
\begin{align*}
\tilde{I}_\beta(w,z)& := \frac{1}{2} \left(
\langle (- \Delta_\alpha + \omega)w,w \rangle + \|\nabla z\|_2^2 + \tilde{\omega} \|z\|_2^2
\right) - \frac{1}{4 \b} (\|w\|_4^4 + \|z\|_4^4) - \frac{1}{2} \int_{\mathbb{R}^2} |wz|^2\\
&= \frac{1}{2} \left(\|\nabla \phi_\lambda\|_2^2 + \lambda \|\phi_\lambda\|_2^2 + (\omega - \lambda) \|w\|_2^2 + (\alpha + \theta_\lambda) |q|^2\right)\\
&\quad \ + \frac{1}{2} \left(\|\nabla z\|_2^2 + \tilde{\omega} \|z\|_2^2\right) - \frac{1}{4 \b} (\|w\|_4^4 + \|z\|_4^4) - \frac{1}{2} \int_{\mathbb{R}^2} |wz|^2,
\end{align*}
$\mathcal{\tilde{N}}_\beta$ is its Nehari manifold
\[
\mathcal{\tilde{N}}_\beta := \big\{(w,z) \in \mathbb{H}_\a \setminus \{(0,0)\} \mid \tilde{G}_\beta(w,z) = 0\big\}
\]
with
\begin{align*}
\tilde{G}_\beta(w,z)& := \langle (- \Delta_\alpha + \omega)w,w \rangle + \|\nabla z\|_2^2 + \tilde{\omega} \|z\|_2^2 - \frac{1}{\b} (\|w\|_4^4 + \|z\|_4^4) - 2 \int_{\mathbb{R}^2} |wz|^2\\
&= \|\nabla \phi_\lambda\|_2^2 + \lambda \|\phi_\lambda\|_2^2 + (\omega - \lambda) \|w\|_2^2 + (\alpha + \theta_\lambda) |q|^2\\
&\quad \ + \|\nabla z\|_2^2 + \tilde{\omega} \|z\|_2^2 - \frac{1}{\b} (\|w\|_4^4 + \|z\|_4^4) - 2 \int_{\mathbb{R}^2} |wz|^2.
\end{align*}

%\red{by the following thm we should have that \eqref{eq11} has a vector and singular GS. we know that any GS of \eqref{eq11} is vector. is it true that any GS  \eqref{eq11} is also singular?
%is it worthy to think about this?}
%
%\red{$\to$ Yes. You are right. any GS of \eqref{eq11} is singular because $\tilde{c}_\infty < \tilde{c}_\infty^0$. I don't know whether it is worthy to think about this or not. But
%I don't think we need to include that in this paper at this time.
%If we eventually realize its value, then put it in our paper then.}

\begin{proof}[\textbf{Proof of Theorem \ref{theo2}}]
Let $(w_n,z_n) = (\sqrt{\beta_n} u_n, \sqrt{\beta_n} v_n)$. We can easily find that $I_{\beta_n}(u_n,v_n) = \frac{1}{\beta_n} \tilde{I}_{\beta_n}(w_n,z_n)$ and $c_{\beta_n} = \tilde{c}_{\beta_n} / \beta_n$. Then $(w_n,z_n)$ is a ground state of $(\mathcal{\tilde{P}}_{\beta_n})$.
Let $\psi_{\lambda,n}:=\sqrt{\beta_n} \phi_{\lambda,n} $ and $r_n := \sqrt{\beta_n} q_n$ so that  $w_n = \psi_{\lambda,n} + r_n \cg_\l $.  
From Lemma \ref{lemm11} and 
\begin{align*}
\tilde{c}_\infty &\ge \tilde{c}_{\beta_n} = \tilde{I}_{\beta_n}(w_n,z_n)\\
 &= \frac{1}{4} \langle (- \Delta_\alpha + \omega) w_n,w_n \rangle + \|\nabla z_n\|_2^2 + \tilde{\omega} \|z_n\|_2^2,
\end{align*}
$\{\psi_{\lambda,n}\}_{n}$, $\{z_n\}_{n}$ is bounded in $H^1(\mathbb{R}^2)$ and $\{r_n\}_{n}$ is bounded in $\mathbb{C}$. Up to a subsequence, there exist $\psi_{\lambda,0}, z_0 \in H^1(\mathbb{R}^2)$ and $r_0 \in \mathbb{C}$ such that
\begin{align*}
&\psi_{\lambda,n} \rightharpoonup \psi_{\lambda,0}\quad \mathrm{weakly\ in}\ H^1(\mathbb{R}^2),\\
&r_n \to r_0\quad \mathrm{in}\ \mathbb{C},\\
&z_n \rightharpoonup z_0\quad \mathrm{weakly\ in}\ H^1(\mathbb{R}^2).
\end{align*}
Since $(w_n,z_n)$ satisfies $(\mathcal{\tilde{P}}_{\beta_n})$, $(w_0,z_0)$ satisfies \eqref{eq11}, where  $w_0 = \psi_{\lambda,0} + r_0 \cg_\l $. 
\ \\
\noindent
\textbf{Claim.}\ $r_0 \neq 0$. \\
Assume $r_0 = 0$. Then 
\begin{align}
\label{eq13}
\begin{aligned}
&\|w_n - \psi_{\lambda,n}\|_p \to 0\qquad \text{for }p \in [2,\infty),\\
&\left|\|w_n\|_4^4 - \|\psi_{\lambda,n}\|_4^4\right| \to 0,\\
&\left|\int_{\mathbb{R}^2} |w_n z_n|^2 - \int_{\mathbb{R}^2} |\psi_{\lambda,n} z_n|^2\right| \to 0. 
\end{aligned}
\end{align}
It's standard to prove the existence of a unique positive number $t_n$ 
such that $(t_n \psi_{\lambda,n},t_n z_n) \in \mathcal{\tilde{N}}_\infty^0$, where
\begin{align*}
&\tilde{c}_\infty^0 := \inf_{(u,v) \in \mathcal{\tilde{N}}_\infty^0} \tilde{I}_\infty^0(u,v),\\
&\tilde{I}_\infty^0 : \mathbb{H} \to \mathbb{R},\\
&\tilde{I}_\infty^0(u,v) := \frac{1}{2} \left( \|\nabla u\|_2^2 + \omega \|u\|_2^2 + \|\nabla v\|_2^2 + \tilde{\omega} \|v\|_2^2\right) - \frac{1}{2} \int_{\mathbb{R}^2} |uv|^2,\\
&\mathcal{\tilde{N}}_\infty^0 := \big\{(u,v) \in \mathbb{H} \setminus \{(0,0)\} \mid \tilde{G}_\infty^0(u,v) = 0\big\},\\
&\tilde{G}_\infty^0(u,v) := 
\|\nabla u\|_2^2 + \omega \|u\|_2^2 + \|\nabla v\|_2^2 + \tilde{\omega} \|v\|_2^2 - 2 \int_{\mathbb{R}^2} |uv|^2.
\end{align*}
Thus
\begin{align}
\label{eq12}
\|(\psi_{\lambda,n},z_n)\|_{\mathbb{H}}^2 = 2 t_n^2 \int_{\mathbb{R}^2} |\psi_{\lambda,n} z_n|^2. 
\end{align}
Now we prove $\{t_n\}_n$ is bounded. Since $(w_n,z_n) \in \mathcal{\tilde{N}}_{\beta_n}$, we have
\begin{align*}
\|(w_n,z_n)\|_{\mathbb{H}_\alpha}^2 &= \frac{1}{\beta_n} (\|w_n\|_4^4 + \|z_n\|_4^4) + 2 \int_{\mathbb{R}^2} |w_n z_n|^2
%\\ &
\le C \|(w_n,z_n)\|_{\mathbb{H}_\alpha}^4
\end{align*}
Therefore we have
\begin{align*}
\|(w_n,z_n)\|_{\mathbb{H}_\alpha}^2 \ge C\quad \mathrm{for\ all}\ n \in \mathbb{N}.
\end{align*}
%\orange{Moreover, 
%\begin{align*}
%\|(w_n,z_n)\|_{\mathbb{H}_\alpha}^2 &\le \frac{C}{\beta_n} \|(w_n,z_n)\|_{\mathbb{H}_\alpha}^4 + 2 \int_{\mathbb{R}^2} |w_n z_n|^2\\
%&\le \frac{C}{\beta_n} \|(w_n,z_n)\|_{\mathbb{H}_\alpha}^2 + 2 \int_{\mathbb{R}^2} |w_n z_n|^2. 
%\end{align*}
%Therefore
%\begin{align*}
%C \le \left(1 - \frac{C}{\beta_n}\right) \|(w_n,z_n)\|_{\mathbb{H}_\alpha}^2 \le 2 \int_{\mathbb{R}^2} |w_n z_n|^2\qquad \text{for $n$ sufficiently large}. 
%\end{align*}}
This, together with the boundedness of $\{(w_n,z_n)\}_n$ in $\mathbb{H}_\a$ and since $\{\b_n\}_n$ is a diverging sequence, implies that 
\[
C \le  \|(w_n,z_n)\|_{\mathbb{H}_\alpha}^2 
-\frac{1}{\beta_n} (\|w_n\|_4^4 + \|z_n\|_4^4) \le 2 \int_{\mathbb{R}^2} |w_n z_n|^2\qquad \text{for $n$ sufficiently large}. 
\]
From \eqref{eq13} and \eqref{eq12}, $\{t_n\}_n$ is bounded. Therefore,  by Lemma \ref{lemm11} and the relation between $c_{\beta_n} $ and $\tilde{c}_{\beta_n} $,
\begin{align*}
\tilde{c}_\infty &\ge \tilde{c}_{\beta_n} = \tilde{I}_{\beta_n}(w_n,z_n)
 \ge \tilde{I}_{\beta_n}(t_n w_n, t_n z_n)\\
 &= \tilde{I}_{\beta_n}^0(t_n \psi_{\lambda,n}, t_n z_n) + o(1)
 = \tilde{I}_\infty^0(t_n \psi_{\lambda,n}, t_n z_n) + o(1)
 \ge \tilde{c}_\infty^0 + o(1). 
\end{align*}
Hence we have $\tilde{c}_\infty \ge \tilde{c}_\infty^0$. By the same argument as in the proof of Lemma \ref{lemm2} and Lemma \ref{lemm3}, we can prove $\tilde{c}_\infty < \tilde{c}_\infty^0$ (see also  \cite{AS05} for the existence of a minimizer for $\tilde{c}_\infty^0$). This is a contradiction. 

\medskip
\noindent
Thus $r_0 \neq 0$ and $(w_0,z_0)$ is a nontrivial solution of \eqref{eq11}. 
Hence, $(w_0,z_0) \in \mathcal{\tilde{N}}_\infty$.  Therefore 
\begin{align*}
\tilde{c}_\infty &\ge \liminf_{n \to \infty} \tilde{c}_{\beta_n} = \liminf_{n \to \infty} \tilde{I}_{\beta_n}(w_n,z_n)\\
&= \frac{1}{4} \liminf_{n \to \infty} (\|\nabla \psi_{\lambda,n}\|_2^2 + \lambda \|\psi_{\lambda,n}\|_2^2 + (\omega - \lambda) \|\psi_{\lambda,n}\|_2^2 + (\alpha + \theta_\lambda) |r_n|^2 + \|\nabla z_n\|_2^2 + \tilde{\omega} \|z_n\|_2^2)\\
&\ge \frac{1}{4} (\|\nabla \psi_{\lambda,0}\|_2^2 + \lambda \|\psi_{\lambda,0}\|_2^2 + (\omega - \lambda) \|\psi_{\lambda,0}\|_2^2 + (\alpha + \theta_\lambda) |r_0|^2 + \|\nabla z_0\|_2^2 + \tilde{\omega} \|z_0\|_2^2)\\
&= \tilde{I}_\infty(w_0,z_0) \ge \tilde{c}_\infty. 
\end{align*}
We obtain $\tilde{c}_\beta \to \tilde{c}_\infty\ (\mathrm{as}\ \beta \to \infty)$ and $\tilde{I}_\infty(w_0,z_0) = \tilde{c}_\infty$. Hence $(w_0,z_0)$ is a minimizer for $\tilde{c}_\infty$. 
By the same argument as in Lemma \ref{min-crit}, $(w_0,z_0)$ is a ground state of \eqref{eq11}. 
Moreover, since $\psi_{\lambda,n} \rightharpoonup \psi_{\lambda,0}$, $z_n \rightharpoonup z_0$ weakly in $H^1(\mathbb{R}^2)$, we obtain
\begin{align*}
&\psi_{\lambda,n} \to \psi_{\lambda,0}\quad \mathrm{in}\ H^1(\mathbb{R}^2),\\
&z_n \to z_0\quad \mathrm{in}\ H^1(\mathbb{R}^2).
\end{align*}
\end{proof}

\section*{Acknowledgment}
The authors wish to thank Professors  Riccardo Adami, Raffaele Carloni, Gustavo de Paula Ramos, and Tatsuya Watanabe for useful discussions about the problem.

%============================================================
% References

%============================================================


\begin{thebibliography}{99}
\addcontentsline{toc}{section}{References}



% \bibitem{BrezisLieb83} 
%H. Brezis and E. Lieb, 
%A relation between pointwise convergence of functions and convergence of functionals. Proc. Am. Math. Soc., \textbf{88}(3) (1983), 486--490.

\bibitem{ABCT2}
%ABCT $\RD$
\newblock R. Adami, F. Boni, R. Carlone, L. Tentarelli, 
\newblock Ground states for the planar NLSE with a point defect as minimizers of the constrained energy,
\newblock Calc. Var. Partial Differential Equations, \textbf{61} (2022), no. 5, Paper No. 195, 32 pp.


\bibitem{ABCT3}
\newblock R.~Adami, F.~Boni, R.~Carlone, L.~Tentarelli,
\newblock  Existence, structure, and robustness of ground states of
a NLSE in 3D with a point defect, 
\newblock J. Math. Phys. {\bf 63} (2022), Paper No. 071501.



\bibitem{AGH}
\newblock S.~Albeverio, F.~Gesztesy, F. R.~Hoegh-Krohn, 
\newblock The low energy expansion in nonrelativistic scattering theory,
\newblock Ann. Inst. H. Poincar\'e{} Sect. A (N.S.),
{\bf 37} (1982), 1--28.

\bibitem{AGHH1}
\newblock S.~Albeverio, S. F.~Gesztesy, R.~Hoegh-Krohn, H.~Holden, 
\newblock Point interactions in two dimensions: basic properties,
approximations and applications to solid state physics,
\newblock J. Reine Angew. Math. 
{\bf 380} (1987), 87--107.

\bibitem{AGHH2}
\newblock S.~Albeverio, S.F.~Gesztesy, R.~Hoegh-Krohn, H.~Holden, 
\newblock Solvable models in quantum mechanics,
\newblock AMS Chelsea Publishing, Providence, RI, 2005.

\bibitem{AH}
\newblock S.~Albeverio, R.~Hoegh-Krohn, 
\newblock Point interactions as limits of short range interactions,
\newblock J. Operator Theory,
{\bf 6} (1981), 313--339.

\bibitem{AS05}
\newblock C.O. Alves, S.H.M. Soares, 
\newblock Existence and concentration of positive solutions for a class of gradient systems,
\newblock NoDEA Nonlinear Differential Equations Appl. \textbf{12} (2005), 437--457.

\bibitem{AC}
\newblock A. Ambrosetti, E. Colorado,
\newblock  Standing waves of some coupled nonlinear Schr\"odinger equations,
\newblock J. Lond. Math. Soc. (2), {\bf 75} (2007), 67--82.

\bibitem{BC}
\newblock F.~Boni, R.~Carlone, 
\newblock NLS ground states on the half-line with point interactions,
\newblock Nonlinear Differential Equations and Applications Research (NoDEA), 
{\bf 30} (2023), Paper No. 51, 23pp.

\bibitem{CFN1}
C.~Cacciapuoti, D.~Finco, D.~Noja, 
\newblock Well posedness of the nonlinear Schr\"odinger equation with
isolated singularities,
\newblock J. Differential Equations, 
{\bf 305} (2021), 288--318.

\bibitem{CFN2}
\newblock C.~Cacciapuoti, D.~Finco, D.~Noja, 
\newblock  Failure of scattering for the NLSE with a point interaction
in dimension two and three,
Nonlinearity, 
\newblock {\bf 36} (2023), 5298--5310.



\bibitem{dPR}
\newblock G. de Paula Ramos,
\newblock Minimizers of constrained functionals involving a point interaction,
\newblock preprint, arXiv:2407.09870.



\bibitem{FN}
\newblock D.~Finco, D.~Noja, 
\newblock Blow-up and instability of standing waves for the NLS 
with a point interaction in dimension two,
\newblock Z. Angew. Math. Phys.
{\bf 74} (2023), Paper No. 162, 17pp.




\bibitem{FJ}
\newblock R.~Fukuizumi, L.~Jeanjean, 
\newblock  Stability of standing waves for a nonlinear {S}chr\"odinger
equation with a repulsive Dirac delta potential,
\newblock Discrete Contin. Dyn. Syst. A.
{\bf 21} (2008), 121--136.

\bibitem{FOO}
\newblock R.~Fukuizumi, M.~Ohta, T.~Ozawa, 
\newblock  Nonlinear {S}chr\"odinger equation with a point defect,
\newblock Ann. Inst. H. Poincar\'e{} C Anal. Non Lin\'eaire,
{\bf 25} (2008), 837--845.


 \bibitem{FGI22} 
\newblock N. Fukaya, V. Georgiev,  M. Ikeda,
\newblock On stability and instability of standing waves for 2d-nonlinear Schr\"{o}dinger equations with point interaction, 
\newblock J. Differential Equations \textbf{321} (2022), 258--295.

\bibitem{GMS}
\newblock V.~Georgiev, A.~Michelangeli, R.~Scandone,
\newblock Standing waves and global well-posedness for the 2d Hartree
equation with a point interaction,
\newblock Comm. PDEs.
{\bf 49} (2024), 242--278.


\bibitem{HA}
\newblock C. Headley, G. Agrawal, 
\newblock Raman Amplification in Fiber Optical Communication Systems,
\newblock Elsevier, 
San Diego, CA,
2005 .

\bibitem{K}
I.P. Kaminow, 
\newblock Polarization in optical fibers, 
\newblock IEEE J. Quantum Electron., {\bf 17}, (1981), 15--22.



\bibitem{KO}
\newblock M.~Kaminaga, M.~Ohta, 
\newblock  Stability of standing waves for nonlinear {S}chr\"odinger
equation with attractive delta potential and repulsive nonlinearity,
\newblock Saitama Math. J. 
{\bf 26} (2009), 39--48.



 \bibitem{KurataOsada21CPAA} 
\newblock K. Kurata, Y. Osada, 
\newblock Asymptotic expansion of the ground state energy for nonlinear Schr\"{o}dinger system with three wave interaction, 
\newblock Commun. Pure Appl. Anal., \textbf{20} (2021), 4239--4251. %doi: 10.3934/cpaa.2021157
%\url{https://doi.org/10.3934/cpaa.2021157}


\bibitem{LW}
\newblock T.C. Lin, J. Wei,
\newblock  Ground state of $N$ coupled nonlinear Schrödinger equations in $\R^n$, $n\leq 3$,  
\newblock Comm. Math. Phys., {\bf  255}, (2005),  629--653.

\bibitem{MMP}
\newblock L.A. Maia, E. Montefusco, B. Pellacci, 
\newblock Positive solutions for a weakly coupled nonlinear Schrödinger system,
\newblock J. Differ. Equations, {\bf 229}, (2006), 743.

\bibitem{M1}
\newblock C.R. Menyuk, 
\newblock Nonlinear pulse propagation in birefringence optical fiber, 
\newblock IEEE J. Quantum Electron., {\bf 23}, (1987),
174--176.

\bibitem{M2}
\newblock C.R. Menyuk, 
\newblock Pulse propagation in an elliptically birefringent Kerr medium, 
\newblock IEEE J. Quantum Electron., {\bf 25}, (1989), 2674--2682.

%
%\bibitem{NTTV}
%\newblock B. Noris, H. Tavares, S. Terracini, G. Verzini, 
%\newblock Uniform Hölder bounds for nonlinear Schrödinger systems with strong competition,
%\newblock Commun. Pure Appl. Math. {\bf 63}, (2010), 267--302 .
%
%\bibitem{P}
%\newblock A. Pomponio, 
%\newblock Coupled nonlinear Schr\"odinger systems with potentials,
%\newblock J. Differential Equations, {\bf 227}, (2006), 258--281.

 \bibitem{Pomponio10} 
\newblock A. Pomponio,
\newblock Ground states for a system of nonlinear Schr\"{o}dinger equations with three wave interaction,
\newblock J. Math. Phys., \textbf{51} (2010), no. 9, 093513, 20 pp.


\bibitem{PS}
\newblock A. Pomponio, S. Secchi, 
\newblock A note on coupled nonlinear Schr\"odinger systems under the effect of general nonlinearities,
\newblock  Commun. Pure Appl. Anal, {\bf 9}, (2010), 741--750.

\bibitem{PW}
\newblock A. Pomponio, T. Watanabe,
\newblock Nonlinear scalar field equation with point interaction,
\newblock preprint, arXiv:2410.00189.

\bibitem{SM}
\newblock H. Sakaguchi, B. A. Malomed, Singular solitons, 
\newblock Phys. Rev. E. {\bf 101}, (2020), 012211.

\bibitem{SCMS}
\newblock E. Shamriz, Z. Chen, B. A. Malomed, H. Sakaguchi, \newblock Singular Mean-Field States: A Brief Review of Recent Results,
\newblock Condensed Matter, {\bf 5},  (2020), Article number: 20.


\end{thebibliography}
\end{document}